\theoremstyle{plain}
\newtheorem{theorem}{Theorem}[section]
\newtheorem{corollary}[theorem]{Corollary}
\newtheorem{lemma}[theorem]{Lemma}
\newtheorem{proposition}[theorem]{Proposition}
\theoremstyle{definition}
\newtheorem*{remark}{Remark}
\theoremstyle{remark}
\newtheorem{example}[theorem]{Example}
\begin{document}
    \title{Products and Factors of Banach Function spaces}
\author{Anton~R. Schep}
\address{Department of Mathematics\\
         University of South Carolina\\
	 Columbia, SC 29208}
\email{schep@math.sc.edu}	  
\subjclass[2000]{46E30, 47B38}
\keywords{Banach function spaces, multiplication operators, factorizations, pointwise products}
\begin{abstract} 
Given two Banach function spaces we study the pointwise product  space $E\cdot F$, especially for the case that the pointwise product of their unit balls is again convex. We then give conditions on when the pointwise product $E\cdot M(E, F)=F$ , where $M(E,F)$ denotes the space of multiplication operators from $E$ into $F$.
\end{abstract}
\maketitle
\section*{Introduction.}
Let $(X,\Sigma, \mu)$ be a complete $\sigma$-finite measure space. By $L_{0}(X, \mu)$ we will denote the set of all measurable functions which are finite a.e.. As usual we will identify functions equal almost evrywhere. A linear subspace of $L_{0}(X, \mu)$ is called a {\it K\"othe function space} if it is normed space which is an order ideal in  $L_{0}(X, \mu)$, i.e,, if $f\in E$ and $|g|\le |f|$ a.e., then $g\in E$ and $\|g\|\le \|f\|$. A norm complete K\"othe function space is called a {\it Banach function space}. Given a  K\"othe function space $E$ we have that 
\[E^{\prime}=\{f\in L_{0}: \int_{X}|fg|\,d\mu <\infty \mbox{ for all }g\in E\}\]
is a Banach function space with the Fatou property, which is called the associate space of $E$.
For a detailed treatment of Banach function spaces we refer to \cite{Zaanen67}. The detailed study of Banach function spaces led to the the study of Riesz spaces and Banach lattices, which incorporated, clarified and extended the earlier theory, see e.g. \cite{Zaanen83}. In this paper we wil study the pointwise product   $E\cdot F=\{f\cdot g: f\in E, g\in F\}$ of two Banach function spaces $E$ and $F$. The main question we will be interested in is whether $E\cdot F$ is again a Banach function space. The finite dimensional case, i.e. the case that $X$ consists of finitely many atoms, shows that one must impose some additional requirements to make this an interesting question. In this paper we will be requiring that the pointwise product $B_{E}\cdot B_{F}$ of the respective unit balls of $E$ and $F$ is again the unit ball of a Banach function space, in which case we will call $E\cdot F$ a product Banach function space. In fact, one of the motivations of this paper was a result in a paper of  Bollobas and Leader in \cite{Bollobas93} (see also \cite{Bollobas95}), who studied the pointwise product of unconditional convex bodies in $\mathbb R^{n}$, which can viewed as a finite dimensional version of one of our results. The main examples of product Banach function spaces are given on the one hand by the fundamental result of Lozanovskii (\cite{Lozanovskii69}, see also \cite{Gillespie81}) which states  that for any Banach function space $E$ the product $E\cdot E^{\prime}$ is a product Banach function space isometrically equal to $L_{1}(X, \mu)$, and on the other hand by the Calderon intermediate space  $E^{\frac 1p}F^{\frac 1{p^{\prime}}}$ which  is a product Banach function space for any pair of Banach function spaces and $1<p<\infty$. In some sense the present paper can be viewed as providing the tools to show that most (if not all) examples in the literature of product Banach function spaces can be deduced from one of these two examples of product Banach function spaces. The present paper is organized as follows. In section 1 we collect some, mostly elementary, results about the normability of the pointwise product of Banach function spaces and show that this is equivalent to having sufficiently many multiplication operators from $E$ into $F^{\prime}$. This leads us to consider, for any pair $E$ and $F$  of Banach function spaces,  the Banach function space of multiplication operators from $E$ into $F$ which we denote by $M(E, F)$, i.e., 
\begin{equation*}
M(E,F)=\{g\in L_{0}(X, \mu): fg\in F \text{ for all } f\in E \},
\end{equation*}
and the norm on $M(E, F)$ is the operator norm
\begin{equation*}
\|g\|_{M(E, F)}=\sup\{\|fg\|_{F}: \|f\|_{E}\le 1\}.
\end{equation*}
We give for non-atomic measures a result which shows that $M(E, F)=\{0\}$ in case the upper index $\sigma(E)$ is less than the lower index $s(F)$ of $F$. This generalizes the well-known result that $M(L^{p}, L^{q})=\{0\}$, whenever $1\le p <q\le \infty$ and $\mu$ is non-atomic. In section 2 we study the basic properties of product Banach function spaces. As the reader will see these results depend heavily on the above mentioned factorization theorem of Lozanovskii. One of the most important results is a cancellation result which says that if  $E, F$ and $G$ are Banach function spaces with the Fatou property, and if we  assume that $E\cdot F$ and $E\cdot G$ are product Banach function spaces such that $E\cdot F\subset E\cdot G$  with $\|h\|_{E\cdot G}\le C\|h\|_{E\cdot F}$ for all $h\in E\cdot F$,  then $F\subset G$ and $\|f\|_{G}\le C \|f\|_{F}$ for all $f\in F$. In section 3 we  consider the problem of division. We will call the Banach function space $E$ a factor of the Banach function space $G$ if there exist a Banach function space $F$ such that $E\cdot F=G$.  Lozanovskii's factorization theorem says that every Banach function space is a factor of $L_{1}(X, \mu)$. To get uniqueness of factors this leads to the question whether $E\cdot M(E, G)$ is a product Banach function space and whether it is equal to $G$.
One of the main results we  prove is that if $E$ and $G$ are Banach function spaces such that there exists $1<p<\infty$ such that $E$ is $p$-convex and $G$ is $p$-concave with convexity and concavity constants equal to 1 and if $E$ has the Fatou property, then ] $E\cdot M(E, F)$ is a product Banach function space and $E\cdot M(E, F)=F$ and $E=M(M(E, F), F)$. In section 4 we presnt some application of our results. In \cite{Bennett96} G. Bennett showed that many classical inequalities involving the $\ell_{p}$-norm can be expressed as a result of product Banach sequence spaces. We will show that some of his results are easy consequences of the results of the previous sections.

	The results of this paper and of \cite{Schep2008} were announced at the V-th Positivity conference in July 2007 in Belfast. In March 2008 we learned about the preprint \cite{Calabuig2008}, where some of the results of this paper are duplicated independently of this paper.

\section{The normed product space}Let  $E$ and $F$ be Banach function spaces on $(X,\Sigma, \mu)$. We will assume that both $E$ and $F$ are saturated Banach function spaces. In this section we discuss when $E\cdot F=\{f\cdot g: f\in E, g\in F\}$ is a normed K\"othe space. The norm, if it exists, will always assumed to be generated by the convex hull of the pointwise product of the unit balls $B_{E}\cdot B_{F}$. In this case one can easily verify that for all $h\in E\cdot F$ we have 
\begin{equation*}
\|h\|_{E\cdot F}=\inf\left\{\sum_{k=1}^{n}\|f_{k}\|_{E}\|g_{k}\|_{F}: |h|\le \sum_{k=1}^{n }f_{k}g_{k}, 0\le f_{k}\in E, 0\le g_{k}\in F \right\}.
\end{equation*}
In case the above expression defines a norm we will say that $E\cdot F$ is normable. From the discussion in \cite{schep2005} and the fact that $(E\cdot F^{)\prime}$ contains a strictly positive element in case $E\cdot F$ is normable, we get immediately the following proposition.
\begin{proposition} Let  $E$ and $F$ be Banach function spaces on $(X,\Sigma, \mu)$. Then the following are equivalent. 
\begin{itemize}
\item [(i)] $E\cdot F$ is normable.
\item[(ii)] There exists $0<g\in L_{0}$ such that $g\cdot F\subset E^{\prime}$.
\item[(iii)] There exists $0<g\in L_{0}$ such that $g\cdot E\subset F^{\prime}$.
\item[(iv)] There exist disjoint measurable sets $X_{n}$ with $\cup_{n}X_{n}=X$ such that $F_{|X_{n}}\subset E^{\prime}$ for all $n$.
\item[(v)] There exist disjoint measurable sets $X_{n}$ with $\cup_{n}X_{n}=X$ such that $E_{|X_{n}}\subset F^{\prime}$ for all $n$.
\end{itemize}
\end{proposition}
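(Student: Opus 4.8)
The plan is to route everything through the associate space of $E\cdot F$ together with the principle, recalled from the discussion in \cite{schep2005}, that a gauge of the form displayed above is a genuine norm precisely when the associate space is saturated, i.e.\ contains a strictly positive function. The first step is to identify that associate space. For $g\in L_0$ one has $\int_X|hg|\,d\mu<\infty$ for every $h\in E\cdot F$ if and only if $\int_X|fg'g|\,d\mu<\infty$ for all $f\in E$ and $g'\in F$; fixing $g'$ this says $gg'\in E'$, whence
\[
(E\cdot F)'=\{g\in L_0: g\cdot F\subset E'\}=\{g\in L_0: g\cdot E\subset F'\},
\]
the two descriptions coinciding because each simply expresses that $\int_X|fg'g|\,d\mu<\infty$ for all $f\in E$ and $g'\in F$. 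In particular conditions (ii) and (iii) are literally the same statement: both assert that $(E\cdot F)'$ contains a strictly positive element.

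Next I would establish (i)$\Leftrightarrow$(ii) using the quoted principle. If $0<g\in(E\cdot F)'$, then the map $g'\mapsto gg'$ is a multiplier $F\to E'$, bounded by the closed graph theorem, with norm $\|g\|_{M(F,E')}$; writing $|h|\le\sum_k f_kg_k$ one gets $\int_X|hg|\le\sum_k\|f_k\|_E\|g_kg\|_{E'}\le\|g\|_{M(F,E')}\sum_k\|f_k\|_E\|g_k\|_F$, and taking the infimum over representations yields $\int_X|hg|\le\|g\|_{M(F,E')}\,\|h\|_{E\cdot F}$. Hence $\|h\|_{E\cdot F}=0$ forces $h=0$, so the gauge is a norm. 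Conversely, since $E$ and $F$ are saturated the set $E\cdot F$ contains a strictly positive element, so normability makes $E\cdot F$ a saturated Banach function space; its associate space therefore contains a strictly positive element, which is exactly (ii).

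It remains to prove (ii)$\Leftrightarrow$(iv); the equivalence (iii)$\Leftrightarrow$(v) then follows by interchanging the roles of $E$ and $F$. For (ii)$\Rightarrow$(iv), given $0<g$ with $g\cdot F\subset E'$, set $X_1=\{g\ge 1\}$ and $X_n=\{\tfrac1n\le g<\tfrac1{n-1}\}$ for $n\ge2$; these are disjoint and cover $X$ because $g>0$ a.e., and on $X_n$ we have $|h|\chi_{X_n}\le n\,|gh|\chi_{X_n}\in E'$ for every $h\in F$, so $F_{|X_n}\subset E'$. For (iv)$\Rightarrow$(ii), each inclusion $F_{|X_n}\subset E'$ is bounded, say $\|h\chi_{X_n}\|_{E'}\le C_n\|h\|_F$, again by the closed graph theorem; choosing $a_n>0$ with $\sum_n a_nC_n<\infty$ and putting $g=\sum_n a_n\chi_{X_n}$, the partial sums of $g|h|=\sum_n a_n|h|\chi_{X_n}$ increase and have $E'$-norm bounded by $\|h\|_F\sum_n a_nC_n$, so the Fatou property of the associate space $E'$ gives $gh\in E'$. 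Thus $g\cdot F\subset E'$ with $g>0$ a.e., which is (ii).

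The measure-theoretic constructions of the last paragraph and the set-level identification of $(E\cdot F)'$ are routine; the substantive point is (i)$\Leftrightarrow$(ii), whose nontrivial direction — that normability forces a strictly positive functional in the associate space — is precisely the saturation fact imported from \cite{schep2005}, and this is where I expect the real work to sit.
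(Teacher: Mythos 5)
Most of your proposal is correct, and it follows the same route the paper takes: the paper disposes of this proposition in one sentence, citing the discussion in \cite{schep2005} together with the fact that $(E\cdot F)^{\prime}$ contains a strictly positive element whenever $E\cdot F$ is normable. Your set-level identification $(E\cdot F)^{\prime}=\{g: g\cdot F\subset E^{\prime}\}=\{g: g\cdot E\subset F^{\prime}\}$, your multiplication-operator estimate $\int|hg|\,d\mu\le \|g\|_{M(F,E^{\prime})}\|h\|_{E\cdot F}$ proving (ii)$\Rightarrow$(i), and your level-set and closed-graph/Fatou arguments for (ii)$\Leftrightarrow$(iv) (hence (iii)$\Leftrightarrow$(v) by symmetry) are all sound; these are exactly the parts the paper calls immediate, and you supply them in full.

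The genuine gap is in your converse direction (i)$\Rightarrow$(ii). You assert that ``normability makes $E\cdot F$ a saturated Banach function space'' and then conclude that its associate space contains a strictly positive element. But normability does not give completeness: the paper's own example $\ell_{1}\cdot\ell_{1}$, which as a set equals $\ell_{1/2}$ but carries the $\ell_{1}$-norm, is normable and not complete. Consequently the standard theorem you are implicitly invoking --- that a saturated \emph{Banach} function space (via the Riesz--Fischer property) has a saturated associate space --- is not available here. For a merely normed K\"othe space, the implication ``saturated $\Rightarrow$ associate space saturated'' is not an elementary or standard fact; applied to $E\cdot F$ it \emph{is} the statement (i)$\Rightarrow$(ii) you are trying to prove, so as a free-standing argument your step is circular at exactly the crux. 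This implication (a Nikishin-type statement about convex solid subsets of $L_{0}$) is precisely the nontrivial content the paper imports from \cite{schep2005}. Your proof stands if, as your closing paragraph suggests, you treat that principle as a black box --- which is all the paper itself does --- but then the interposed ``saturated Banach function space'' derivation is not a proof of it and should be deleted or replaced by an explicit appeal to the cited result.
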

As condition (iv) (or (v)) holds automatically when the measure $\mu$ is atomic (we can take $X_{n}$ to be an atom for all $n$),  we have the following corollary.
\begin{corollary}
Let  $E$ and $F$ be Banach function spaces on $(X,\Sigma, \mu)$ and assume $\mu$ is an atomic measure. Then $E\cdot F$ is normable.
\end{corollary}
In general it is not true that $E\cdot F$ is complete, whenever $E\cdot F$ is normable, as can be seen from the following simple example.
\begin{example}
Let $E=F=\ell_{1}$. Then $\ell_{1}\cdot \ell_{1}$ as a set is equal to $\ell_{\frac 12}$, but $\|\cdot\|_{E\cdot F}=\|\cdot\|_{1}$, so that $(E\cdot F, \|\cdot\|_{E\cdot F})$ is not complete. 
\end{example}
In this example and more generally, when $E\cdot F$ is normable, then the completion of $(E\cdot F, \|\cdot \|_{E\cdot F})$ is  the Banach envelope of the quasi-normed space $E\cdot F$, provided with the quasi-norm 
\begin{equation*}
\inf \{\|g\|_{E} \|h\|_{F}: |f|=gh, 0\le g\in E, 0\le h\in F\}.
\end{equation*}
We refer to \cite{Kalton84} for some general remarks about the Banach envelope of locally bounded space with separating dual.
For any pair $E$ and $F$  of Banach function spaces we denote by $M(E, F)$ the Banach function space of multiplication operators from $E$ into $F$, i.e., 
\begin{equation*}
M(E,F)=\{g\in L_{0}(X, \mu): fg\in F \text{ for all } f\in E \},
\end{equation*}
and the norm on $M(E, F)$ is the operator norm
\begin{equation*}
\|g\|_{M(E, F)}=\sup\{\|fg\|_{F}: \|f\|_{E}\le 1\}.
\end{equation*}
 Note that it can happen (as we will see in more detail later on), that $M(E, F)=\{0\}$. The following proposition relates the normability of $E\cdot F$ to the saturation of $M(E, F^{\prime})$.
 \begin{proposition}\label{dual}
Let  $E$ and $F$ be Banach function spaces on $(X,\Sigma, \mu)$. Then the following are equivalent.
\begin{itemize}
\item[(i)] $E\cdot F$ is normable.
\item[(ii)] The Banach function space $M(E, F^{\prime})$ is saturated.
\end{itemize}
In case $E\cdot F$ is normable, then $(E\cdot F)^{\prime}=M(E, F^{\prime})=M(F, E^{\prime})$ isometrically.
\end{proposition}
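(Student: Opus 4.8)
The plan is to reduce everything to the defining formula of the associate norm, \(\|v\|_{G'}=\sup\{\int_X |vh|\,d\mu : \|h\|_G\le 1\}\), together with the Hölder inequality \(\int_X|uv|\,d\mu\le \|u\|_G\|v\|_{G'}\), applied with \(G=E\) and \(G=F\). First I would record the symmetric description of the multiplier norm: for \(g\in L_0\),
\[
\|g\|_{M(E,F')}=\sup_{\|f\|_E\le 1}\|gf\|_{F'}=\sup_{\|f\|_E\le 1}\,\sup_{\|h\|_F\le 1}\int_X|gfh|\,d\mu,
\]
and the last quantity is manifestly symmetric in the roles of \(E\) and \(F\), so it also equals \(\sup_{\|h\|_F\le 1}\|gh\|_{E'}=\|g\|_{M(F,E')}\). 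Since the finiteness of this supremum is likewise symmetric, this simultaneously yields the set equality \(M(E,F')=M(F,E')\) and the asserted isometry between them. (One should note in passing that membership \(gE\subseteq F'\) already forces the multiplication operator \(f\mapsto gf\) to be bounded by the closed graph theorem, so its operator norm is automatically finite.)

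Next, for the equivalence (i) \(\Leftrightarrow\) (ii) I would invoke the characterization of normability in the preceding proposition, whose condition (iii) states that \(E\cdot F\) is normable if and only if there is a strictly positive \(g\in L_0\) with \(g\cdot E\subseteq F'\). By definition such a \(g\) is precisely a strictly positive element of \(M(E,F')\). On a \(\sigma\)-finite measure space a Köthe function space is saturated exactly when it contains a weak unit, that is, a strictly positive function; the nontrivial direction is obtained by exhausting \(X\) by a sequence of sets carrying nonzero elements of \(M(E,F')\) and forming a suitably weighted, norm-convergent combination. Hence \(E\cdot F\) is normable \(\Leftrightarrow\) \(M(E,F')\) contains a strictly positive element \(\Leftrightarrow\) \(M(E,F')\) is saturated.

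Finally, assuming \(E\cdot F\) normable, I would prove \((E\cdot F)'=M(E,F')\) isometrically. As sets this is immediate: \(w\in(E\cdot F)'\) means \(\int_X|w\,fg|\,d\mu<\infty\) for all \(f\in E\) and \(g\in F\), which is exactly \(wf\in F'\) for all \(f\in E\), i.e. \(w\in M(E,F')\). For the norms, one inequality is easy, since \(\|f\|_E\le 1\) and \(\|h\|_F\le 1\) give \(fh\in E\cdot F\) with \(\|fh\|_{E\cdot F}\le 1\), so the symmetric formula above yields \(\|w\|_{M(E,F')}\le \|w\|_{(E\cdot F)'}\). For the reverse inequality I would use the explicit formula for \(\|\cdot\|_{E\cdot F}\): given any representation \(|\phi|\le\sum_k f_kg_k\) with \(0\le f_k\in E\) and \(0\le g_k\in F\), Hölder gives \(\int_X|w\phi|\,d\mu\le\sum_k\int_X |w|\,f_kg_k\,d\mu\le \|w\|_{M(E,F')}\sum_k\|f_k\|_E\|g_k\|_F\), and taking the infimum over all such representations yields \(\int_X|w\phi|\,d\mu\le\|w\|_{M(E,F')}\,\|\phi\|_{E\cdot F}\), hence \(\|w\|_{(E\cdot F)'}\le\|w\|_{M(E,F')}\). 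The main obstacle I anticipate is precisely this last step: one must make sure that the associate norm, computed a priori against the closed convex solid hull generating the unit ball of \(E\cdot F\), can be evaluated through the sum-of-products representation of \(\|\cdot\|_{E\cdot F}\), so that Hölder applies term by term; the symmetry of the multiplier norm established at the outset is what makes the two bounds meet.
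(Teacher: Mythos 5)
Your proposal is correct and takes essentially the same approach as the paper: the equivalence of (i) and (ii) is the same rephrasing of the preceding proposition (saturation of $M(E,F^{\prime})$ amounting to the existence of a strictly positive multiplier), and the isometry $(E\cdot F)^{\prime}=M(E,F^{\prime})$ is established by the same two estimates, namely testing against products $fh$ with $f\in B_{E}$, $h\in B_{F}$ for one inequality, and applying H\"older term by term to a sum-of-products representation for the other. The only cosmetic difference is that you obtain $M(E,F^{\prime})=M(F,E^{\prime})$ directly from the symmetric double-supremum formula, while the paper deduces it from $E\cdot F=F\cdot E$ and the identification already proved.
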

\begin{proof}
The equivalence of (i) and (ii) is just a rephrasing of the equivalence of (i) and (iii) of the above proposition. Assume now that $E\cdot F$ is normable and let $0\le h\in (E\cdot f)^{\prime}$ with $\|h\|_{(E\cdot F)^{\prime}}\le 1$. Then let $0\le f\in E$ and $0\le g\in F$. Then $fg\in E\cdot F$ with $\|fg\|_{E\cdot F}\le \|f\|_{E}\|g\|_{F}$. Hence $0\le \int (hf)g\,d\mu\le \|f\|_{E}\|g\|_{F}$. This implies that $hf\in F^{\prime}$ and $\|hf\|_{F^{\prime}}\le \|f\|_{E}$, i.e., $h\in M(E, F^{\prime})$ and $\|h\|_{M(E, F)}\le 1$. This shows $(E\cdot F)^{\prime}\subset M(E, F^{\prime})$ and $\|h\|_{M(E, F)}\le \|h\|_{(E\cdot F^{)\prime}}$. Now let $0\le h\in M(E, F^{\prime})$ with $\|h\|_{M(E, F^{\prime})}\le 1$ and let $0\le \tilde{h}\in E\cdot F$ with $\|\tilde{h}\|_{E\cdot F}<1$. Then there exist $0\le f_{i}\in E$ and $g_{i}\in F$ with $0\le \tilde{h}\le \sum_{i=1}^{n}f_{i}g_{i}$ and $\sum_{i=1}^{n}\|f_{i}\|_{E}\|g_{i}\|_{F}<1$. This implies
\begin{equation*}
\int \tilde{h}h\,d\mu\le \sum_{i=1}^{n}\int (hf_{i})g_{i}\,d\mu\le\sum_{i=1}^{n}\|hf_{i}\|_{F^{\prime}}\|g_{i}\|_{F}\le \sum_{i=1}^{n}\|f_{i}\|_{E}\|g_{i}\|_{F}<1.
\end{equation*}
Hence $h\in (E\cdot F)^{\prime}$ and $\|h\|_{(E\cdot F)^{\prime}}\le 1$. This shows $M(E, F^{\prime})\subset (E\cdot F)^{\prime}$ and $ \|h\|_{(E\cdot F^{)\prime}}\le \|h\|_{M(E, F)}$, which completes the proof  that $(E\cdot F)^{\prime}=M(E, F^{\prime})$. Since $E\cdot F=F\cdot E$, it also follows that $(E\cdot F)^{\prime}=M(F,  E^{\prime})$.
\end{proof}
\begin{corollary}
Let  $E$  be a Banach function space on $(X,\Sigma, \mu)$. Then $E\cdot E$ is normable if and only if there exists $0<h\in L_{0}$ such that $E\subset L_{2}(X, hd\mu)$.
\end{corollary}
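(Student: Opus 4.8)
The plan is to reduce the statement to the equivalence (i)$\Leftrightarrow$(iii) of the first proposition of this section, specialized to $F=E$: the space $E\cdot E$ is normable precisely when there is a weight $0<g\in L_{0}$ with $g\cdot E\subset E'$. Thus it suffices to show that the existence of such a $g$ is equivalent to the existence of a weight $0<h\in L_{0}$ with $E\subset L_{2}(X, h\,d\mu)$, and in fact I expect the \emph{same} weight to work in both directions, which makes the correspondence especially transparent.

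First I would prove the implication from the $L_{2}$-embedding to normability. Assume $E\subset L_{2}(X, h\,d\mu)$ for some $0<h\in L_{0}$. Given $f, v\in E$, both lie in $L_{2}(X, h\,d\mu)$, so the Cauchy--Schwarz inequality gives $\int_{X} |f|\,|v|\,h\,d\mu \le \|f\|_{L_{2}(h)}\,\|v\|_{L_{2}(h)}<\infty$. Since this holds for every $v\in E$, the function $hf$ satisfies the defining condition of $E'$, so $hf\in E'$; as $f\in E$ was arbitrary this means $h\cdot E\subset E'$. Applying condition (iii) of the proposition with $F=E$ (so that $F'=E'$) then yields that $E\cdot E$ is normable.

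For the converse, suppose $E\cdot E$ is normable. By condition (iii) there is $0<g\in L_{0}$ with $g\cdot E\subset E'$. Fix $f\in E$; we may assume $f\ge 0$, since $E$ is an order ideal. Then $gf\in E'$, and testing the defining integral of $E'$ against the element $f\in E$ itself gives $\int_{X} |gf|\,|f|\,d\mu = \int_{X} |f|^{2}\,g\,d\mu<\infty$. Hence $f\in L_{2}(X, g\,d\mu)$, and since $f\in E$ was arbitrary we conclude $E\subset L_{2}(X, g\,d\mu)$, the desired embedding with $h=g$.

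I do not anticipate a serious obstacle: the two key moves are the Cauchy--Schwarz estimate in the forward direction and the `self-testing' choice $v=f$ in the converse, both elementary. The only points requiring a little care are that the weight may be taken strictly positive a.e.\ (which is exactly the form in which the proposition supplies $g$), and that $|f|\in E$ together with the finiteness of $\int_{X}|f|^{2}g\,d\mu$ are legitimate, both of which follow from $E$ being an order ideal in $L_{0}$.
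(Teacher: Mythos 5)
Your proof is correct and takes essentially the same approach as the paper: both directions reduce to producing a strictly positive weight $g$ with $g\cdot E\subset E^{\prime}$, using Cauchy--Schwarz to get normability from the $L_{2}$-embedding and the ``self-testing'' computation $\int |f|^{2}g\,d\mu=\int |gf|\,|f|\,d\mu<\infty$ for the converse. The only cosmetic difference is that the paper extracts the weight as a strictly positive element of $(E\cdot E)^{\prime}=M(E,E^{\prime})$ via Proposition~\ref{dual} and bounds the integral by H\"older's inequality, whereas you invoke condition (iii) of the first proposition and the bare definition of $E^{\prime}$ --- the same object and the same estimate in substance.
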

\begin{proof} If $E\cdot E$ is normable, then $(E\cdot E)^{\prime}=M(E, E^{\prime})$ contains a strictly positive $h$. Then $\int f^{2}hd\mu=\int f (fh)\,d\mu\le \|f\|_{E}\|fh\|_{E^{\prime}}<\infty$ for all $f\in E$, i.e., 
$E\subset L_{2}(X, hd\mu)$. Conversely, if $E\subset L_{2}(X, hd\mu)$ for some strictly positive $h$, then by Cauchy-Schwarz's inequality we have that $\int |f_{1}f_{2}|h\,d\mu<\infty$ for all $f_{1}, f_{2}\in E$, which shows that $hE\subset E^{\prime}$, so that  $E\cdot E$ is normable.
\end{proof}

We will now show that in certain cases we have $M(E, F)=\{0\}$. First we state a simple lemma.
\begin{lemma} \label{simple}Let $a,b, c, d, e, f$ be positive real numbers, such that $a\le b+c$ and $d\ge e+f$. Then
\[\frac {a}{d}\le \max \left\{\frac be, \frac cf\right\}.\]
\end{lemma}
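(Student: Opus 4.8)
The plan is to bound everything in terms of the single quantity $M=\max\{b/e,\,c/f\}$, which is positive since all six numbers are positive. The whole statement should then fall out of combining the two given inequalities with the defining property of $M$.

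First I would record what the maximum gives us: by definition $b/e\le M$ and $c/f\le M$, so, multiplying through by the positive numbers $e$ and $f$ respectively, we obtain $b\le Me$ and $c\le Mf$. Adding these two inequalities yields $b+c\le M(e+f)$.

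Next I would chain this with the two hypotheses. Since $a\le b+c$ we get $a\le M(e+f)$, and since $d\ge e+f$ and $M>0$ we have $M(e+f)\le Md$; combining gives $a\le Md$. Dividing by $d>0$ produces $a/d\le M$, which is exactly the claimed bound
\[
\frac{a}{d}\le \max\left\{\frac{b}{e},\,\frac{c}{f}\right\}.
\]

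There is no real obstacle here; the only points needing any care are that positivity of $e$, $f$, $d$, and $M$ is what lets us multiply and divide the inequalities without reversing their direction, so I would simply make sure to invoke the positivity hypothesis at each of those steps.
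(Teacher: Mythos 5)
Your proof is correct and follows essentially the same route as the paper's: both arguments come down to showing $b+c \le M(e+f)$ where $M=\max\{b/e,\,c/f\}$, and then comparing with $a$ and $d$. The only difference is cosmetic --- the paper obtains this via the mediant inequality after assuming without loss of generality that $c/f \le b/e$, whereas you avoid the case distinction by adding the inequalities $b\le Me$ and $c\le Mf$ directly.
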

\begin{proof} From the assumption it follows immediately that $\frac ad\le \frac {b+c}{e+f}$.
Assume $\frac cf\le \frac be$. Then $\frac {b+c}{e+f}\le \frac be$. Hence $\frac ad\le \frac {b+c}{e+f}\le \frac be$.
\end{proof}
Recall now that a Banach lattice $E$ 
 Similarly a Banach lattice $E$ is called
$p$--convex for $1\le p \le\infty$ if there exists a constant $M$ such that
for all $f_1,\dots, f_n\in E$ 
$$\left\|\left(\sum^n_{k=1}|f_k|^{p} \right)^{\frac 1p} \right\|_{E}\le
M\left(\sum^n_{k=1}\|f_k \|^p_{E}\right)^{\frac 1p} \text{ if }1\le p<\infty $$
or $\|(\sup |f_k|\|_{E}\le M \max_{1\le k\le n}\|f_k\|_{E}$ if $p=\infty.$
Similarly $E$ is called
$p$--concave for $1\le p \le\infty$ if there exists a constant $M$ such that
for all $f_1,\dots, f_n\in E$ 
$$\left(\sum^n_{k=1}\|f_k \|^p_{E}\right)^{\frac 1p} \le M\left\|\left(\sum^n_{k=1}|f_k|^{p} \right)^{\frac 1p} \right\|_{E}\text{ if }1\le p<\infty$$ 
or $\max_{1\le k\le n}\|f_k\|_{E}\le M\|\sup |f_k|\|_{E} $ if $p=\infty.$
The notions of $p$--convexity, respectively $p$--concavity are
closely related to the notions of upper $p$--estimate (strong
$\ell_p$--composition property), respectively lower $p$--estimate (strong
$\ell_p$--decomposi\-tion property) as can be found in e.g. \cite[ Theorem
1.f.7]{Lindenstrauss79}.
Then the numbers
$$\sigma(E)= \inf (p\ge 1: E\text{ is $p$-concave })$$
and
$$s(E)=\sup (p\ge 1: E\text{ is $p$-convex })$$
are called the {\bf upper} or {\bf lower index} of $E$, respectively.
If $\operatorname {dim}(E)= \infty$, then $1\le s(E)\le \sigma(E)\le \infty$.
We collect some basic facts about the indices of a Banach lattice: If
$\sigma(E)<\infty$, then $E$ has order continuous norm and if $s(E)>1$, then the
dual space $E^*$ has order continuous norm. Also we have
$$\frac 1{s(E)} + \frac 1{\sigma (E^*)}=1 \text{ and } \frac 1{\sigma(E)}
+\frac 1{s(E^*)}=1.$$
In case $E$ is a Banach function space with the (weak) Fatou property, then we have $s(E^{*})=s(E^{\prime})$ and $\sigma(E^{*})=\sigma(E^{\prime})$. For this and additional details see \cite{Grobler75} and \cite{Dodds77}.
\begin{lemma}
Let  $E$ be a Banach function space on $(X,\Sigma, \mu)$. Assume $E$ has order continuous norm and $\mu$ is non-atomic. Then for all $0\le f\in E$ there exist $0\le g\le f\in E$ with $g\wedge (f-g) =0$  such that $\|g\|_{E}=\|f-g\|_{E}$.
\end{lemma}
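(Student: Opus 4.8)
The plan is to obtain the decomposition by an intermediate value argument along a continuously growing family of subsets of the support of $f$. First, note that the requirements $0\le g\le f$ and $g\wedge(f-g)=0$ force $g=f\chi_{A}$ for some measurable $A$ contained in the support $S=\{f>0\}$, since at each point either $g=0$ or $g=f$. Thus the lemma reduces to producing a measurable $A\subseteq S$ with $\|f\chi_{A}\|_{E}=\|f\chi_{S\setminus A}\|_{E}$. If $f=0$ we take $g=0$, so assume $\|f\|_{E}>0$.

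To build the family, I would use that $\mu$ is $\sigma$-finite to pick a strictly positive weight $w$ with $\nu:=w\,d\mu$ a finite measure on $S$, normalized so that $\nu(S)=1$; then $\nu$ is still non-atomic. By the standard structure theory of non-atomic measure spaces (a Sierpi\'nski-type result, equivalently the isomorphism of the measure algebra with that of Lebesgue measure on $[0,1]$), there is a measurable $u:S\to[0,1)$ with uniform $\nu$-distribution. Setting $A_{t}:=\{x\in S: u(x)<t\}$ gives an increasing family with $A_{0}=\emptyset$, $A_{1}=S$, and, crucially, $\nu(\{u=t\})=0$ for every $t$; since $\nu$ and $\mu|_{S}$ are mutually absolutely continuous, also $\mu(\{u=t\}\cap S)=0$ for all $t$. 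Writing $g_{t}=f\chi_{A_{t}}$, I define
\[
\psi(t)=\|g_{t}\|_{E}-\|f-g_{t}\|_{E},\qquad t\in[0,1],
\]
so that $\psi(0)=-\|f\|_{E}<0$ and $\psi(1)=\|f\|_{E}>0$.

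The heart of the argument, and the only place where the two hypotheses enter, is continuity of $\psi$; since $[0,1]$ is metric it suffices to check sequential continuity. If $t_{n}\uparrow t$ then $A_{t_{n}}\uparrow A_{t}$, so $g_{t}-g_{t_{n}}\downarrow 0$ a.e.; if $t_{n}\downarrow t$ then $A_{t_{n}}\downarrow\{u\le t\}$, which coincides with $A_{t}$ up to the $\mu$-null set $\{u=t\}$, so $g_{t_{n}}-g_{t}\downarrow 0$ a.e. In either case order continuity of the norm converts this monotone a.e.\ convergence into $\|g_{t_{n}}-g_{t}\|_{E}\to 0$, and hence also $\|(f-g_{t_{n}})-(f-g_{t})\|_{E}\to 0$; the triangle inequality then yields continuity of $\psi$. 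Here non-atomicity is precisely what provides a family $A_{t}$ with no mass on any level set $\{u=t\}$, preventing a jump, while order continuity is precisely what turns the a.e.\ monotone convergence into norm convergence.

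Finally, the intermediate value theorem gives $t^{*}\in(0,1)$ with $\psi(t^{*})=0$, i.e.\ $\|g_{t^{*}}\|_{E}=\|f-g_{t^{*}}\|_{E}$, and taking $g=f\chi_{A_{t^{*}}}$ supplies $0\le g\le f$ with $g\wedge(f-g)=0$ by construction. I expect the only genuinely delicate point to be the existence of the uniformly distributed function $u$ (equivalently, a continuously increasing family of sets whose level sets are null) on the non-atomic space; the remaining steps are a routine combination of monotone convergence and order continuity.
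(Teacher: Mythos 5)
Your proof is correct, but it takes a genuinely different route from the paper's. The paper argues by maximality: it first observes (using non-atomicity together with order continuity) that every nonzero $0\le f\in E$ has components of arbitrarily small positive norm, then applies Zorn's lemma to the poset $\mathcal P=\{g\in E: 0\le g\le f,\ g\wedge(f-g)=0,\ \|g\|_E\le\|f-g\|_E\}$, where order continuity guarantees that every chain has a least upper bound in $\mathcal P$, and finally shows that strict inequality at a maximal element $g_0$ is impossible, since a sufficiently small component of $f-g_0$ could be absorbed into $g_0$ without leaving $\mathcal P$. You replace Zorn's lemma by the intermediate value theorem along a measure-continuous filtration $A_t=\{u<t\}$, with order continuity used instead to make $t\mapsto\|f\chi_{A_t}\|_E-\|f\chi_{S\setminus A_t}\|_E$ continuous (and checking monotone sequences does suffice there, since both terms are monotone in $t$). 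The trade-off is where non-atomicity is spent: the paper needs only the elementary fact that a non-atomic measure admits sets of arbitrarily small positive measure, whereas you invoke the stronger structural fact that a non-atomic probability space carries a uniformly distributed function. That fact is true in full generality, but your parenthetical justification via the isomorphism of the measure algebra with that of Lebesgue measure on $[0,1]$ is not quite right, since Carath\'eodory's isomorphism theorem requires the measure algebra to be separable; the correct general argument builds a dyadic system of sets by repeated halving (Sierpi\'nski's intermediate value theorem for non-atomic measures) and takes $u$ as the limit of the dyadic approximations. With that point repaired, your proof is complete, and it even yields slightly more than the lemma asks: the splitting can be taken along any prescribed measure-continuous increasing family of sets.
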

\begin{proof} First observe that for all $0\le f \in E$ with $f\neq 0$ we can find for all $\epsilon>0$ a component $0\le g\le f\in E$ with $g\wedge (f-g) =0$  such that $0<\|g\|_{E}<\epsilon$. In fact we can find a sequence $X_{n}\downarrow \emptyset$ with $0<\mu(X_{n})<\frac 1n$. Then $\|f\chi_{X_{n}}\|_{E}\downarrow 0$ implies that there exists $n_{0}$ such that $\|f\chi_{X_{n_{0}}}\|_{E}<\epsilon$. Now let $0\le f\in E$ and consider the set $\mathcal P=\{g\in E: 0\le g\le f, g\wedge (f-g)=0, \|g\|_{E}\le \|f-g\|_{E}\}$.  Then $\mathcal P\neq \emptyset$ and with the ordering inherited from $E$ it has the property that every chain in $\mathcal P$ has a least upper bound, by the order continuity of the norm. Hence $\mathcal P$ has a maximal element $g_{0}$. Assume $\|g_{0}\|_{E}< \|f-g_{0}\|_{E}$. Then take $\epsilon=\frac 12 (\|f-g_{0}\|_{E}-\|g_{0}\|_{E})$. By the remark in the beginning of the proof we can a component $0<g_{1}\le (f-g_{0})$ of $f-g_{0}$ such that $\|g_{1}\|_{E}<\epsilon$. One can verify now easily that $g_{0}+g_{1}\in \mathcal P$, which contradicts the maximality of $g_{0}$. Hence $\|g_{0}\|_{E}=\|f-g_{0}\|_{E}$ and the proof is complete.
\end{proof}
\begin{theorem}
Let  $E$ and $F$ be Banach function spaces on $(X,\Sigma, \mu)$ and assume $\mu$ is non-atomic. 
Then $\sigma(E)<s(F)$ implies that $M(E, F)=\{0\}$.
\end{theorem}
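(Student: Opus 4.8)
The plan is to exploit the gap $\sigma(E)<s(F)$ by fixing exponents $q,p$ with $\sigma(E)<q<p<s(F)$. By the monotonicity of the convexity/concavity scales (if $E$ is $p_0$-concave it is $p_1$-concave for $p_1\ge p_0$, and dually for convexity) this makes $E$ $q$-concave with some constant $M_E$ and $F$ $p$-convex with some constant $M_F$; moreover $\sigma(E)<\infty$, so $E$ has order continuous norm. Arguing by contradiction, I would suppose $0\le g\in M(E,F)$ with $g\neq 0$ and set $c=\|g\|_{M(E,F)}$. Choosing $\epsilon>0$ with $\mu(\{g\ge\epsilon\})>0$ and using $\sigma$-finiteness together with the saturation of $E$, I fix a set $A\subseteq\{g\ge\epsilon\}$ with $0<\mu(A)<\infty$ and $\chi_A\in E$. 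Then $g\chi_A\in F$ and $g\chi_A\ge\epsilon\chi_A$, so $\chi_A\in F$ and $\|g\chi_A\|_F>0$; the aim is to contradict this.

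For each $n$, I use the non-atomicity of $\mu$ (every non-atomic finite measure attains all intermediate values) to split $A$ into disjoint sets $A_1,\dots,A_n$ of equal measure $\mu(A)/n$, and write $a_i=\|\chi_{A_i}\|_E$. Since the $\chi_{A_i}$ are disjoint, $(\sum_i\chi_{A_i}^q)^{1/q}=\chi_A$ and $(\sum_i(g\chi_{A_i})^p)^{1/p}=g\chi_A$ pointwise. The $q$-concavity of $E$ gives $(\sum_i a_i^q)^{1/q}\le M_E\|\chi_A\|_E$, while the $p$-convexity of $F$ together with the operator bound $\|g\chi_{A_i}\|_F\le c\,a_i$ gives
\[
\|g\chi_A\|_F\le M_F\Big(\sum_i\|g\chi_{A_i}\|_F^{\,p}\Big)^{1/p}\le M_F\,c\Big(\sum_i a_i^{\,p}\Big)^{1/p}.
\]
I would then interpolate the $\ell^p$ sum against the $\ell^q$ sum by the elementary estimate $\sum_i a_i^{\,p}\le(\max_i a_i)^{p-q}\sum_i a_i^{\,q}$, which yields
\[
\|g\chi_A\|_F\le M_F\,c\,(M_E\|\chi_A\|_E)^{q/p}\,(\max_i a_i)^{(p-q)/p}.
\]

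The decisive point is that $\max_i a_i\to 0$ as $n\to\infty$, and this is where order continuity enters: it forces absolute continuity of the norm, i.e. $\sup\{\|\chi_B\|_E:B\subseteq A,\ \mu(B)\le\delta\}\to 0$ as $\delta\to 0$. Indeed, if this failed one could extract sets $B_j$ with $\mu(B_j)\le 2^{-j}$ and $\|\chi_{B_j}\|_E\ge\epsilon_0$, whose tails $C_k=\bigcup_{j\ge k}B_j$ decrease to a null set, contradicting order continuity. Since each $A_i$ has measure $\mu(A)/n$, we get $\max_i a_i\le\sup\{\|\chi_B\|_E:\mu(B)\le\mu(A)/n\}\to 0$. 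As $p>q$, the factor $(\max_i a_i)^{(p-q)/p}\to 0$ while all remaining factors stay fixed, so $\|g\chi_A\|_F=0$, contradicting $\|g\chi_A\|_F>0$; hence $g=0$ and $M(E,F)=\{0\}$.

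I expect the main obstacle to be arranging the two sides of the index information so that the fixed constants cancel against the combinatorial growth: the $q$-concavity bounds $\sum_i a_i^{\,q}$ from above, the $p$-convexity routes $\|g\chi_A\|_F$ through the larger exponent $p$, and the strict gap $p-q>0$ is exactly what lets a uniformly small $\max_i a_i$ annihilate the estimate. The genuine technical heart is the fact $\max_i a_i\to 0$, i.e. the uniform absolute continuity of the norm; I note that the preceding splitting lemma provides disjoint components of arbitrarily small $E$-norm and could be used in place of the equal-measure partition if one wishes to avoid invoking the intermediate-value property of non-atomic measures.
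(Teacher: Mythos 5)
Your proof is correct, but it takes a genuinely different route from the paper's. The paper fixes $\sigma(E)<p<q<s(F)$ (note that your roles of $p$ and $q$ are interchanged) and works with an arbitrary $0\le f_0\in E$: using a lemma, proved via order continuity, non-atomicity and Zorn's lemma, that any positive element splits into two disjoint components of \emph{equal $E$-norm}, it builds by induction a sequence of components $f_n$ whose norms satisfy $\|f_n\|_E^p\ge 2\|f_{n+1}\|_E^p$ while, by an elementary ratio lemma, the quotient $\|hf_n\|_F^q/\|f_n\|_E^p$ never decreases; since $q>p$, the quotient $\|hf_n\|_F^q/\|f_n\|_E^q$ then blows up, contradicting $\|h\|_{M(E,F)}<\infty$. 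You instead reduce to a characteristic function: you pick $A\subset\{g\ge\epsilon\}$ of finite positive measure with $\chi_A\in E$, partition $A$ into $n$ pieces of \emph{equal measure} (the intermediate-value property of non-atomic measures), apply $q$-concavity of $E$ and $p$-convexity of $F$ once to the whole disjoint family together with the elementary estimate $\sum_i a_i^p\le(\max_i a_i)^{p-q}\sum_i a_i^q$, and then drive the bound to zero via uniform absolute continuity of the norm of $\chi_A$; your tail-set argument $C_k=\bigcup_{j\ge k}B_j$ decreasing to a null set does correctly derive that uniformity from order continuity, and this is indeed the technical heart of your version. The trade-off: the paper's equal-norm splitting needs no reduction to indicator functions, no finiteness of measure, and no uniform smallness statement --- the greedy pigeonhole absorbs all of that --- whereas your one-shot equal-measure partition avoids both the Zorn-based splitting lemma and the induction, at the cost of the absolute-continuity lemma. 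Both arguments rest on the same two pillars: order continuity of the norm of $E$ (forced by $\sigma(E)<\infty$) and the strict gap between the exponents applied to a disjoint decomposition.
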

\begin{proof}
Let $ \sigma(E)<p<q<s(F)$. Then $E$ is $p$-concave and $F$ is $q$-convex. Renorming $E$ and $F$, if necessary, we can assume that the concavity and convexity constants are 1. Also $p<\infty$ implies that the space $E$ has order continuous norm. Let now $0\le h\in M(E, F)$ and $0\le f_{0}\in E$ with $\|f_{0}\|_{E}>0$. By the above lemma we can write $f_{0}=g_{0}+h_{0}$ with $g_{0}\wedge h_{0}=0$ and such that $\|g_{0}\|_{E}=\|h_{0}\|_{E}$. By the p-concavity we have $\|f\|_{E}^{p}\ge \|g_{0}\|_{E}^{p}+\|h_{0}\|_{E}^{p}=2\|g_{0}\|_{p}$ and by the $q$-convexity of $F$ we have $\|hf_{0}\|_{F}^{q}\le \|hg_{0}\|_{F}^{q}+\|hh_{0}\|_{F}^{q}$. It follows now from Lemma \ref{simple} that there exists $f_{1}$ equal to either $g_{0}$ or $h_{0}$ such that
\begin{equation*}
\frac {\|hf_{1}\|_{F}^{q}}{\|f_{1}\|_{E}^{p}}\ge \frac {\|hf_{0}\|_{F}^{q}}{\|f_{0}\|_{E}^{p}}.
\end{equation*}
By induction we can now find $f_{n+1}\ge 0$ such that $\|f_{n}\|_{E}^{p}\ge 2 \|f_{n+1}\|^{p}$ and 
\begin{equation*}
\frac {\|hf_{n+1}\|_{F}^{q}}{\|f_{n+1}\|_{E}^{p}}\ge \frac {\|hf_{n}\|_{F}^{q}}{\|f_{n}\|_{E}^{p}}.
\end{equation*}
Assume now that $hf_{0}\neq 0$. Then we have
\begin{align*} 
\|h\|_{M(E, F)}^{q}&\ge  \frac {\|hf_{n}\|_{F}^{q}}{\|f_{n}\|_{E}^{q}}= \frac {\|hf_{n}\|_{F}^{q}}{\|f_{n}\|_{E}^{p}}\cdot \frac 1{\|f_{n}\|_{E}^{q-p}}\\
	&\ge  \frac {\|hf_{0}\|_{F}^{q}}{\|f_{0}\|_{E}^{q}}\cdot 2^{\frac {n(q-p)}p}\cdot \frac 1{\|f_{0}\|_{E}^{q-p}}\to \infty,
\end{align*}
which is contradiction. Hence $hf_{0}=0$ for all $f_{0}\in E$ and thus $h=0$. 
\end{proof}
\begin{corollary} Let  $E$ and $F$ be Banach function spaces on $(X,\Sigma, \mu)$ and assume $\mu$ is non-atomic. Then $\frac 1{\sigma(E)}+\frac 1{\sigma(F)}>1$ implies that $E\cdot F$ is not normable.
\end{corollary}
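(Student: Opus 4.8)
The plan is to reduce this to the preceding Theorem via Proposition~\ref{dual}. By that proposition $E\cdot F$ is normable if and only if $M(E,F^{\prime})$ is saturated, so it suffices to show that the hypothesis forces $M(E,F^{\prime})=\{0\}$, which is certainly not saturated since $X$ has positive measure (a saturated Banach function space contains a strictly positive element, and $\{0\}$ does not). To invoke the Theorem for the pair $(E,F^{\prime})$ I must translate the index condition $\frac 1{\sigma(E)}+\frac 1{\sigma(F)}>1$ into the strict inequality $\sigma(E)<s(F^{\prime})$.

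The key step is therefore to identify the lower index of the associate space, namely to establish $s(F^{\prime})=\sigma(F)^{\prime}$, where $\sigma(F)^{\prime}$ denotes the conjugate exponent given by $\frac 1{\sigma(F)}+\frac 1{\sigma(F)^{\prime}}=1$. I would deduce this from the duality between convexity and concavity for associate spaces: after renorming, $F$ is $p$-concave with constant $1$ if and only if $F^{\prime}$ is $p^{\prime}$-convex with constant $1$. This holds for associate spaces of Banach function spaces with no order-continuity or Fatou assumption on $F$ itself, because $F^{\prime}$ always has the Fatou property. Taking the supremum over the admissible exponents then gives $s(F^{\prime})=\sup\{p:F\text{ is }p^{\prime}\text{-concave}\}=\sigma(F)^{\prime}$, equivalently $\frac 1{s(F^{\prime})}=1-\frac 1{\sigma(F)}$. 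Alternatively one may pass to $F^{\prime\prime}$, which has the Fatou property and satisfies $\sigma(F^{\prime\prime})=\sigma(F)$ and $(F^{\prime\prime})^{\prime}=F^{\prime}$, and then apply the stated relation $\frac 1{\sigma(F^{\prime\prime})}+\frac 1{s((F^{\prime\prime})^{*})}=1$ together with $s((F^{\prime\prime})^{*})=s(F^{\prime})$.

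With this identity the conclusion is immediate: the hypothesis $\frac 1{\sigma(E)}+\frac 1{\sigma(F)}>1$ reads $\frac 1{\sigma(E)}>1-\frac 1{\sigma(F)}=\frac 1{s(F^{\prime})}$, hence $\sigma(E)<s(F^{\prime})$ with \emph{strict} inequality, which is exactly the hypothesis of the Theorem. Applying the Theorem with $F$ replaced by $F^{\prime}$, and using that $\mu$ is non-atomic, yields $M(E,F^{\prime})=\{0\}$. Since $M(E,F^{\prime})$ is then not saturated, Proposition~\ref{dual} shows that $E\cdot F$ is not normable.

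I expect the main obstacle to be the verification of $s(F^{\prime})=\sigma(F)^{\prime}$ in this generality, that is, without assuming $F$ has the Fatou property. The cleanest route is the constant-$1$ associate duality of $p$-convexity and $p$-concavity; one must check that the suprema and infima defining the indices match correctly at the endpoint exponent and, crucially, that strictness is preserved, since it is the strict inequality $\sigma(E)<s(F^{\prime})$ that the Theorem requires.
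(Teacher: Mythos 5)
Your proposal is correct and takes essentially the same route as the paper: the paper's own proof likewise rewrites the hypothesis as $\frac{1}{\sigma(E)}>1-\frac{1}{\sigma(F)}=\frac{1}{s(F^{\prime})}$, applies the preceding theorem to the pair $(E,F^{\prime})$ to get $M(E,F^{\prime})=\{0\}$, and concludes non-normability via Proposition~\ref{dual}. Your extra discussion of the identity $s(F^{\prime})=\sigma(F)^{\prime}$ just expands the index facts the paper quotes in its preliminaries, and note that only the one-sided implication ``$F$ is $p$-concave $\Rightarrow$ $F^{\prime}$ is $p^{\prime}$-convex'' (hence $s(F^{\prime})\ge\sigma(F)^{\prime}$) is actually needed, which holds with no Fatou hypothesis on $F$, so the delicate converse direction you flag as an obstacle can be bypassed entirely.
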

\begin{proof}
From $\frac 1{\sigma(E)}+\frac 1{\sigma(F)}>1$, it follows that $\frac 1{\sigma(E)}>1-\frac 1{\sigma(F)}=\frac 1{s(F^{\prime})}$.  Hence by the above proposition $M(E, F^{\prime})=\{0\}$.
\end{proof}
\section{Product Banach function spaces}
As seen from the examples in the previous section it can happen that $E\cdot F$ is a normed K\"othe space, but is not complete. To better understand when $E\cdot F$ is a Banach function space we need the Calderon construction of intermediate spaces, which was studied and extended by Lozanovskii.
Let $E$ and $F$ be Banach function spaces on $(X, \mu)$. Then for $1< p<\infty$ the Banach function space $E^{\frac 1p}F^{\frac 1{p^{\prime}}}$ is defined as the space of all $f\in L_{0}$ such that $|f|=|g|^{\frac 1p}|h|^{\frac 1{p^{\prime}}}$ for some $g\in E$ and $h\in F$. The norm on $E^{\frac 1p}F^{\frac 1{p^{\prime}}}$ is defined by
\begin{align*}
\|f\|_{E^{\frac 1p}F^{\frac 1{p^{\prime}}}}&=\inf \{\|g\|_{E}^{\frac 1p}\|h\|_{F}^{\frac 1{p^{\prime}}}: |f|=|g|^{\frac 1p}|h|^{\frac 1{p^{\prime}}} \text{ for some }g\in E, h\in F\}\\
&=\inf \{\lambda: |f|=\lambda |g|^{\frac 1p}|h|^{\frac 1{p^{\prime}}}, \|g\|_{E}\le 1, \|h\|_{F}\le 1\}.
\end{align*}
It is well-known that $E^{\frac 1p}F^{\frac 1{p^{\prime}}}$ is again a Banach function space, moreover it has order continuous norm if at least one of $E$ and $F$ has order continuous norm. Also $E^{\frac 1p}F^{\frac 1{p^{\prime}}}$ has the Fatou property if both $E$ and $F$ have the Fatou property. The above construction contains as a special case the so-called $p$-convexication of $E$ by taking $F=L_{\infty}$. We will denote this space by $E^{\frac 1p}$, since as sets $E^{\frac 1p}L_{\infty}^{\frac 1{p^{\prime}}}=E^{\frac 1p}$. Note that the norm on $E^{\frac 1p}$ is given by $\||f|^{p}\|_{E}^{\frac 1p}$ for all $f\in E^{\frac 1p}$. This implies that the space $E^{\frac 1p}F^{\frac 1{p^{\prime}}}=E^{\frac 1p}\cdot F^{\frac 1{p^{\prime}}}$ as defined in the previous section and that
\begin{equation*}
\|f\|_{E^{\frac 1p}\cdot F^{\frac 1{p^{\prime}}}}=\inf \{\|g\|_{E^{\frac 1p}}\|h\|_{F^{\frac 1{p^{\prime}}}}: |f|=|g|\cdot|h|, g\in E^{\frac 1p}, h\in F^{\frac 1{p^{\prime}}}\}.
\end{equation*}
In particular the pointwise product of the unit balls $B_{E^{\frac 1p}}$ and $B_{{F^{\frac 1{p^{\prime}}}}}$ is convex. Let again $E$ and $F$ be Banach function spaces on $(X,\Sigma, \mu)$. Assume that $E\cdot F$ is a normable K\"othe function space. Then we will say that $E\cdot F$ is a {\it product Banach function space} if $E \cdot F$ is complete and the norm on $E\cdot F$ is given by
\begin{equation*}
\|f\|_{E\cdot F}=\inf \{\|g\|_{E} \|h\|_{F}: |f|=gh, 0\le g\in E, 0\le h\in F\},
\end{equation*}
i.e., the pointwise product $B_{E}\cdot B_{F}$ of the unit balls of $E$, respectively $F$, is convex. From the above discussion it is clear that $E^{\frac 1p}\cdot F^{\frac 1{p^{\prime}}}$ is a product Banach function space. Also the fundamental result of Lozanovskii (\cite{Lozanovskii69}, see also \cite{Gillespie81}) is that for any Banach function space $E$ the product $E\cdot E^{\prime}$ is a product Banach function space isometrically equal to $L_{1}(X, \mu)$. 
We  first show that completeness can  be omitted from our definition of product Banach function space. Then we will recall from \cite{Schep2008} that more generally the pointwise product $B_{E}\cdot B_{F}$ of the unit balls $B_{E}$ and $B_{F}$ is closed with respect to a.e. convergence in case the norms on $E$ and $F$ have the Fatou property. Recall that if $E\cdot F$ is a normable K\"othe space, then for $f\in E\cdot F$ the quasi-norm of $f\in E\cdot F$ is given by
\begin{equation*}
\rho_{E\cdot F}(f)=\inf \{\|g\|_{E}\|h\|_{F}: |f|=gh, 0\le g\in E, 0\le h\in H\}. 
\end{equation*}
Note that $\|f\|_{E\cdot F}(f)\le \rho_{E\cdot F}(f)$  and  equality holds if and only if  $B_{E}\cdot B_{F}$ is convex. In the next theorem we will need the $p$-concavification of a Banach function space. If $E$ is $p$-convex for some $p>1$, then we can define the $p$-concavification $E^{p}$ by $f\in E^{p}$ if $f\in L_{0}(X, \mu)$ such that $|f|^{p}\in E$. If the convexity constant is equal to one, then $\|f\|_{E^{p}}=\|\,|f|^{p}\|_{E}^{\frac 1p}$ is a norm on $E^{p}$ and $E^{p}$ is complete with respect to this norm (see \cite{Lindenstrauss79}).
\begin{theorem}
Let  $E$ and $F$ be Banach function spaces  and assume that $\rho_{E\cdot F}$ is a norm. Then $E\cdot F$ is complete with respect to $\rho_{E\cdot F}$, i.e. $E\cdot F$ is a product Banach function space.
\end{theorem}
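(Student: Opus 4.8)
The plan is to identify $E\cdot F$, equipped with $\rho_{E\cdot F}$, as a concavification of a Calder\'on product and then invoke the completeness statement recalled just before the theorem. Set $A=E^{\frac12}F^{\frac12}$, which by the discussion above is a (complete) Banach function space. The first step is the pointwise identity
\[
E\cdot F=\bigl\{f\in L_{0}:\ |f|^{\frac12}\in A\bigr\},\qquad \rho_{E\cdot F}(f)=\bigl\||f|^{\frac12}\bigr\|_{A}^{2}.
\]
Indeed, $f=gh$ with $0\le g\in E$ and $0\le h\in F$ holds exactly when $|f|^{\frac12}=g^{\frac12}h^{\frac12}$ is a Calder\'on representation of $|f|^{\frac12}$ in $E^{\frac12}F^{\frac12}$; since the constraint $|f|^{\frac12}=|g|^{\frac12}|h|^{\frac12}$ is the same as $|f|=|g||h|$, the definition of the Calder\'on norm gives $\||f|^{\frac12}\|_{A}=\inf\{\|g\|_{E}^{\frac12}\|h\|_{F}^{\frac12}:|f|=|g||h|\}$, and squaring recovers exactly the infimum defining $\rho_{E\cdot F}(f)$. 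Thus $E\cdot F$ with $\rho_{E\cdot F}$ is precisely the $2$-concavification of $A$, i.e.\ the space of $f$ with $|f|^{\frac12}\in A$ normed by $\||f|^{\frac12}\|_{A}^{2}$.

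Next I would show that the hypothesis that $\rho_{E\cdot F}$ is a norm is exactly the statement that $A$ is $2$-convex with constant $1$. Writing $N(g)=\||g|^{\frac12}\|_{A}^{2}$, homogeneity and definiteness of $N$ are automatic, so $N=\rho_{E\cdot F}$ is a norm if and only if it is finitely subadditive. Applying subadditivity to $g_{k}=|f_{k}|^{2}$ (for arbitrary $f_{1},\dots,f_{n}\in A$) and using $\bigl|\sum_{k}|f_{k}|^{2}\bigr|^{\frac12}=\bigl(\sum_{k}|f_{k}|^{2}\bigr)^{\frac12}$ yields
\[
\Bigl\|\bigl(\textstyle\sum_{k=1}^{n}|f_{k}|^{2}\bigr)^{\frac12}\Bigr\|_{A}^{2}\le \sum_{k=1}^{n}\|f_{k}\|_{A}^{2},
\]
which is precisely $2$-convexity of $A$ with constant $1$; conversely this inequality returns the subadditivity of $N$. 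Hence, under the hypothesis, $A$ is a Banach function space that is $2$-convex with constant $1$. Now the completeness result recalled before the theorem (see \cite{Lindenstrauss79}), applied to $A$, says that the $2$-concavification of $A$ is complete for the norm $\||\cdot|^{\frac12}\|_{A}^{2}$. By the identification of the first step this is exactly $(E\cdot F,\rho_{E\cdot F})$, so $E\cdot F$ is complete. Since $\rho_{E\cdot F}$ being a norm also makes $B_{E}\cdot B_{F}$ convex and equal to the unit ball, $E\cdot F$ is a product Banach function space.

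The conceptual crux, and the step I would treat most carefully, is the first one: matching the infimum defining $\rho_{E\cdot F}$ with the Calder\'on norm of $|f|^{\frac12}$, and thereby recognizing $E\cdot F$ as a $2$-concavification. Once this is in place, completeness is not established by hand but imported from the standard fact about concavifications, and the only thing left to notice is that no additional hypotheses on $E$ and $F$ (such as the Fatou property or order continuity) are needed, since $A=E^{\frac12}F^{\frac12}$ is a Banach function space for arbitrary $E$ and $F$ and the hypothesis alone supplies the $2$-convexity with constant $1$ that the concavification result requires. As a sanity check this mechanism reproduces the earlier example: for $E=F=\ell_{1}$ one gets $A=\ell_{1}$, which is not $2$-convex, so $\rho$ is not a norm and $E\cdot F=\ell_{\frac12}$ fails to be complete.
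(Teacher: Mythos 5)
Your proof is correct and follows essentially the same route as the paper: both identify $(E\cdot F,\rho_{E\cdot F})$ with the $2$-concavification of the Calder\'on space $E^{\frac12}F^{\frac12}$, observe that the norm hypothesis is exactly $2$-convexity with constant one, and import completeness from the standard concavification result in \cite{Lindenstrauss79}. Your write-up merely makes explicit the norm identification and the equivalence between subadditivity of $\rho_{E\cdot F}$ and $2$-convexity, which the paper leaves implicit.
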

\begin{proof}
Let $G=E^{\frac 12}F^{\frac 12}$. Then $G$ is a Banach function space and $G$ is the 2-convexification of the normed lattice $E\cdot F$ with the norm $\rho_{E\cdot F}$. In particular $G$ is 2-convex with convexity constant equal to one. Now $E\cdot F$ is isometric to the 2-concavification of $G$ and thus $E\cdot F$ is complete.
\end{proof}
\begin{corollary}Let  $E$ and $F$ be Banach function spaces. Then $E\cdot F$ is a product Banach function space if and only if $E^{\frac 12}F^{\frac 12}$ is 2-convex with convexity constant one. In particular  $E\cdot E$ is a product Banach function space if and only if $E$ is 2-convex with convexity constant one. 
\end{corollary}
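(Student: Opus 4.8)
The plan is to reduce the statement to a single equivalence: that $G:=E^{\frac12}F^{\frac12}$ is $2$-convex with convexity constant one if and only if $\rho_{E\cdot F}$ is a norm. Granting this, both implications follow at once, because by definition $E\cdot F$ is a product Banach function space exactly when $B_E\cdot B_F$ is convex, i.e. when $\rho_{E\cdot F}=\|\cdot\|_{E\cdot F}$ is a norm, and in that case the previous theorem already supplies completeness.

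The core is the dictionary between $G$ and $(E\cdot F,\rho_{E\cdot F})$, which I would set up exactly as in the proof of the previous theorem. For $f\in E\cdot F$ the factorizations $|f|=gh$ with $0\le g\in E$, $0\le h\in F$ are in bijection with the representations $|f|^{\frac12}=g^{\frac12}h^{\frac12}$ defining membership of $|f|^{\frac12}$ in $G$, and the defining quantities match after squaring, so that $\|\,|f|^{\frac12}\,\|_G^2=\rho_{E\cdot F}(f)$. Note that $\|\cdot\|_G$ is genuinely a norm for every pair $E,F$, since $G$ is a Calder\'on space; what is in question is only the convexity constant. First I would substitute $u_k=|f_k|^2\ge0$ and observe that the $2$-convexity inequality with constant one, $\big\|(\sum_k|f_k|^2)^{\frac12}\big\|_G\le(\sum_k\|f_k\|_G^2)^{\frac12}$, becomes, after squaring both sides and applying the dictionary, precisely $\rho_{E\cdot F}(\sum_k u_k)\le\sum_k\rho_{E\cdot F}(u_k)$. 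Since $\rho_{E\cdot F}$ is a monotone, positively homogeneous lattice quasi-norm, it is a norm if and only if it is subadditive on positive elements; thus the $2$-convexity of $G$ with constant one is literally the same statement as the triangle inequality for $\rho_{E\cdot F}$.

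With this identity in hand both implications are immediate. If $E\cdot F$ is a product Banach function space then $\rho_{E\cdot F}$ is subadditive, hence $G$ is $2$-convex with constant one; conversely, if $G$ is $2$-convex with constant one then $\rho_{E\cdot F}$ is subadditive, hence a norm, so $B_E\cdot B_F$ is convex and, by the previous theorem, $E\cdot F$ is a complete product Banach function space. The final assertion is the case $F=E$ together with the isometry $E^{\frac12}E^{\frac12}=E$. I do not expect a genuine obstacle here: the only point requiring care is the bijection of factorizations under $u\mapsto u^{\frac12}$ and the verification that the infima defining $\|\cdot\|_G$ and $\rho_{E\cdot F}$ correspond after squaring, which is the routine computation underlying the whole Calder\'on/convexification formalism.
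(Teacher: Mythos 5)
Your proposal is correct and follows essentially the same route as the paper: the theorem preceding this corollary is proved there by observing that $E^{\frac 12}F^{\frac 12}$ is the $2$-convexification of $(E\cdot F,\rho_{E\cdot F})$, and the corollary is exactly the convexification/concavification dictionary you spell out, with the case $F=E$ handled via $E^{\frac 12}E^{\frac 12}=E$ isometrically. The only difference is presentational: you verify by hand the equivalence (quoting that a lattice quasi-norm is a norm iff it is subadditive on positive elements) that the paper obtains by citing the standard convexification--concavification facts from Lindenstrauss--Tzafriri.
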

From \cite{Schep2008} we have the following result.
\begin{theorem}
Let  $E$ and $F$ be Banach function spaces with the Fatou property. Then $B_{E}\cdot B_{F}$ is closed in $L_{0}(X, \mu)$ with respect to a.e. convergence.  If addition  $\rho_{E\cdot F}$ is a norm, then $E\cdot F$ is a product Banach function space with the Fatou property.
\end{theorem}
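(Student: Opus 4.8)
The plan is to prove the two assertions in turn, obtaining the second formally from the first. Passing to absolute values we may assume throughout that all functions are nonnegative, so the first assertion reduces to the following: if $h_n=f_ng_n\to h$ a.e. with $f_n\in B_E$ and $g_n\in B_F$, then $h=fg$ for some $f\in B_E$ and $g\in B_F$.

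For the first assertion I would square-root into the Calder\'on $2$-convexification. Put $G=E^{\frac12}F^{\frac12}$, which is a Banach function space with the Fatou property and satisfies $\|uv\|_G\le\|u\|_{E^{\frac12}}\|v\|_{F^{\frac12}}$, where $\|w\|_{E^{\frac12}}=\||w|^2\|_E^{\frac12}$; moreover, since $G$ is the $2$-convexification of $(E\cdot F,\rho_{E\cdot F})$, we have $\rho_{E\cdot F}(h)=\|\sqrt h\|_G^2$. Writing $\sqrt{h_n}=\sqrt{f_n}\,\sqrt{g_n}$ gives $\|\sqrt{h_n}\|_G\le\|f_n\|_E^{\frac12}\|g_n\|_F^{\frac12}\le1$ and $\sqrt{h_n}\to\sqrt h$ a.e. The first key step is that the norm of a Banach function space with the Fatou property is lower semicontinuous for a.e.\ convergence: if $w_k\to w$ a.e.\ then $\inf_{j\ge k}w_j\uparrow|w|$, and the Fatou property yields $\|w\|_G\le\liminf_k\|w_k\|_G$. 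Applied here this gives $\|\sqrt h\|_G\le1$, i.e.\ $\rho_{E\cdot F}(h)\le1$. The second key step is attainment: that $\rho_{E\cdot F}(h)\le1$ forces an honest factorization $h=fg$ with $\|f\|_E\|g\|_F\le1$, which after balancing gives $f\in B_E$ and $g\in B_F$, as desired.

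This attainment is the main obstacle, because bounded sequences in $E$ and $F$ need not have a.e.\ convergent subsequences, so one cannot simply pass to limits of the factors. The tool I would use is the geometric-mean inequality, valid in every Banach function space: for $u_1,\dots,u_n\ge0$ one has $\|\prod_{j=1}^n u_j^{1/n}\|_E\le\prod_{j=1}^n\|u_j\|_E^{1/n}$, which follows from the pointwise bound $\prod_{j=1}^n u_j^{1/n}\le\frac1n\sum_{j=1}^n u_j$ together with solidity of the norm. Taking a minimizing sequence $f_jg_j=h$ normalized so that $\|f_j\|_E=\|g_j\|_F\to\rho_{E\cdot F}(h)^{1/2}$ and forming the geometric means $F_n=(\prod_{j=1}^n f_j)^{1/n}$, $G_n=(\prod_{j=1}^n g_j)^{1/n}$ produces factorizations $F_nG_n=h$ with $\limsup_n\|F_n\|_E$ and $\limsup_n\|G_n\|_F$ both at most $\rho_{E\cdot F}(h)^{1/2}\le1$. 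The delicate point, where the Fatou property is essential, is to upgrade the inequality $FG\le h$ for the limits $F=\liminf_n F_n\in B_E$, $G=\liminf_n G_n\in B_F$ to the honest equality $FG=h$, i.e.\ to force genuine convergence of the geometric means. This is the Lozanovskii-type core of the argument and the step I expect to require the most care.

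Finally, the second assertion follows formally. If $\rho_{E\cdot F}$ is a norm, the earlier theorem already gives that $E\cdot F$ is a product Banach function space with $\|\cdot\|_{E\cdot F}=\rho_{E\cdot F}$ and $B_{E\cdot F}=B_E\cdot B_F$. For the Fatou property, let $0\le h_n\uparrow h$ a.e.\ with $M=\sup_n\|h_n\|_{E\cdot F}<\infty$. For each $\epsilon>0$ the functions $h_n/(M+\epsilon)$ lie in $B_E\cdot B_F$ and converge a.e.\ to $h/(M+\epsilon)$, so by the first assertion $h/(M+\epsilon)\in B_E\cdot B_F$; letting $\epsilon\to0$ gives $h\in E\cdot F$ with $\|h\|_{E\cdot F}\le M$, while monotonicity of the norm gives $M\le\|h\|_{E\cdot F}$. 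Hence $\|h_n\|_{E\cdot F}\uparrow\|h\|_{E\cdot F}$, which is precisely the Fatou property.
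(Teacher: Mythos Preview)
Your reduction is sound: closedness of $B_E\cdot B_F$ under a.e.\ convergence is equivalent to attainment of the infimum defining $\rho_{E\cdot F}$ (each direction is a one-line argument), and once that is in hand your derivation of the Fatou property for $E\cdot F$ is clean. The Calder\'on-space step showing $\rho_{E\cdot F}(h)\le 1$ for any a.e.\ limit $h$ of elements of $B_E\cdot B_F$ is also correct.

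The gap is exactly where you flag it, and it is a genuine one. Your geometric means give factorizations $F_nG_n=h$ with $\limsup_n\|F_n\|_E\le 1$ and $\limsup_n\|G_n\|_F\le 1$, but nothing in the construction forces $F_n$ (or $G_n$) to converge a.e. On the support of $h$ one has $G_n=h/F_n$, hence $\liminf_n G_n=h/\limsup_n F_n$, so the product $(\liminf_n F_n)(\liminf_n G_n)$ equals $h\cdot(\liminf_n F_n)/(\limsup_n F_n)$, which is strictly below $h$ wherever $F_n$ genuinely oscillates. To recover $h$ you would need $\limsup_n F_n\in B_E$, and the Fatou property gives no control on $\limsup$. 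The geometric-mean averaging simply does not come with a convergence principle, so this step cannot be completed along the lines you sketch.

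The paper does not prove the theorem here but quotes it from \cite{Schep2008}; from the proof of the subsequent attainment result (Theorem~\ref{minimum}) one sees that the intended argument uses a different compactness substitute, namely a Koml\'os-type theorem for Banach function spaces with the Fatou property. One passes to a subsequence along which the \emph{arithmetic} (Ces\`aro) means of $f_{n_k}$ converge a.e.\ to some $f$ and those of $g_{n_k}$ to some $g$; the Fatou property then gives $f\in B_E$ and $g\in B_F$. The product step is pointwise Cauchy--Schwarz: since $f_{n_k}g_{n_k}\to h$ a.e.\ one has $\sqrt{f_{n_k}g_{n_k}}\to\sqrt h$ a.e., hence also in Ces\`aro mean, and
\[
\frac1N\sum_{k=1}^{N}\sqrt{f_{n_k}g_{n_k}}\;\le\;\Bigl(\frac1N\sum_{k=1}^{N}f_{n_k}\Bigr)^{1/2}\Bigl(\frac1N\sum_{k=1}^{N}g_{n_k}\Bigr)^{1/2},
\]
so letting $N\to\infty$ yields $h\le fg$; replacing $g$ by $h/f\le g$ then gives $h\in B_E\cdot B_F$. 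This is precisely the ``theorem on products of Ces\`aro convergent sequences'' invoked in the paper. The moral is that some subsequence-extraction principle beyond the Fatou property is genuinely required; arithmetic means work because Koml\'os supplies the convergence, whereas your geometric means do not have an analogous theorem behind them.
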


The following theorem says  for product Banach function spaces defined  by Banach function spaces withe the Fatou property that the infimum in the definition of the norm is actually attained.

\begin{theorem}\label{minimum} Let  $E$ and $F$ be Banach function spaces with the Fatou property and assume that $E\cdot F$ is a product Banach function space. Then for all $0\le f\in E\cdot F$ there exist $0\le g\in E$ and $0\le h\in F$ such that $f=gh$ and $\|f\|_{E\cdot F}=\|g\|_{E}\|h\|_{F}$.
\end{theorem}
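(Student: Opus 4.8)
The plan is to show that any $f$ of norm one already lies in the set $B_E\cdot B_F$ itself, and then simply read off the desired factorization. By homogeneity it suffices to treat $0\le f$ with $\|f\|_{E\cdot F}=1$: the general case follows by rescaling one of the factors, and the case $f=0$ is trivial. Since $E\cdot F$ is a product Banach function space, the functional $\rho_{E\cdot F}$ is a norm and coincides with $\|\cdot\|_{E\cdot F}$, a fact I will use throughout.

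First I would approximate $f$ from inside. For each $n$ set $f_n=(1-\tfrac1n)f$, so that $\rho_{E\cdot F}(f_n)=1-\tfrac1n<1$. By the definition of $\rho_{E\cdot F}$ there exist $0\le g_n\in E$ and $0\le h_n\in F$ with $f_n=g_nh_n$ and $\|g_n\|_E\|h_n\|_F<1$. Rescaling the pair $(g_n,h_n)$ to $\bigl(\sqrt{\|h_n\|_F/\|g_n\|_E}\,g_n,\ \sqrt{\|g_n\|_E/\|h_n\|_F}\,h_n\bigr)$ leaves the product unchanged while making both factors have norm $\sqrt{\|g_n\|_E\|h_n\|_F}\le 1$ (the degenerate case $g_n=0$ forcing $f_n=0$ being trivial); hence $f_n\in B_E\cdot B_F$.

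The heart of the argument is then to pass to the limit. Since $f_n\to f$ pointwise, hence a.e., and $E$ and $F$ have the Fatou property, the preceding theorem guarantees that $B_E\cdot B_F$ is closed with respect to a.e. convergence, so $f\in B_E\cdot B_F$. Thus $f=gh$ with $0\le g\in E$, $0\le h\in F$, $\|g\|_E\le 1$ and $\|h\|_F\le 1$. Combining this with the defining inequality of the norm yields $1=\|f\|_{E\cdot F}\le\|g\|_E\|h\|_F\le 1$, forcing $\|g\|_E\|h\|_F=1=\|f\|_{E\cdot F}$, which is exactly the factorization sought.

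I expect the only real obstacle to be the limiting issue: a naive minimizing sequence of factorizations need not converge in any obvious topology, so one cannot extract an optimal factorization directly. The device that resolves this is to avoid minimizing sequences altogether and instead use the inner approximation $f_n=(1-\tfrac1n)f$ together with the a.e.-closedness of $B_E\cdot B_F$ supplied by the Fatou property. Everything else—the rescaling of the factors and the final sandwich of inequalities—is routine.
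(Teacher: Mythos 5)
Your proof is correct, but it follows a genuinely different route from the paper's. The paper proves the attainment directly with Koml\'os machinery: it picks exact factorizations $f=g_nh_n$ with $\|g_n\|_E\le 1+2^{-n}$ and $\|h_n\|_F\le 1+2^{-n}$, invokes Koml\'os' theorem for Banach function spaces together with a theorem on products of Ces\`aro convergent sequences (both from \cite{Schep2008}) to produce a.e. Ces\`aro limits $g$ and $h$ with $f\le gh$, shrinks $h$ to get $f=gh$, and then uses the Fatou property to bound $\|g\|_E\le 1$ and $\|h\|_F\le 1$ via the norms of the Ces\`aro means. You instead use, as a black box, the theorem stated immediately before this one --- that $B_E\cdot B_F$ is closed under a.e. convergence when $E$ and $F$ have the Fatou property --- and you apply it not to a minimizing sequence of factorizations of $f$ but to the inner approximations $(1-\frac 1n)f$; the observation that each $(1-\frac 1n)f$ lies \emph{exactly} in $B_E\cdot B_F$ (after balancing the two factors by the rescaling $\sqrt{\|h_n\|_F/\|g_n\|_E}$, with the degenerate case handled separately) is what lets you avoid any convergence argument of your own. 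The two proofs rest on the same underlying machinery, since the closedness theorem is itself a Koml\'os-type result from \cite{Schep2008}, but yours is more economical and makes transparent that attainment of the infimum is a formal consequence of exactly two facts: $B_E\cdot B_F$ is a.e. closed, and $B_E\cdot B_F$ is convex (so that $\|\cdot\|_{E\cdot F}=\rho_{E\cdot F}$). What the paper's longer argument buys is independence from the preceding theorem and an explicit description of the optimal factors as a.e. Ces\`aro limits of nearly optimal ones.
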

\begin{proof} Let $0\le f\in E\cdot F$ with $\|f\|_{E\cdot F}=1$. Then there exist $0\le g_{n}\in E$, $0\le h_{n}\in F$ with $f=g_{n}h_{n}$ and $\|g_{n}\|_{E}\le 1+\frac 1{2^{n}}$, $\|h_{n}\|_{F}\le 1+ \frac 1{2^{n}}$ for all $n\ge 1$. From Koml\'os' theorem for Banach function spaces and a theorem on products of Cesaro convergent sequences  (see \cite{Schep2008}) it follows that  there exist subsequences $\{g_{n_{k}}\}$ and $\{h_{n_{k}}\}$ and $g\in E$ with $\{g_{n_{k}}\}$ Ces\`aro converges a.e to $g$ and $\{h_{n_{k}}\}$ Ces\`aro converges a.e to $h$  such that $f\le gh$. Replacing $h$ by a smaller function we can assume $f=gh$.  Moreover 
\begin{equation*}
\|\frac 1k(g_{n_{1}}+\cdots g_{n_{k}})\|_{E}\le \frac 1k(1+\frac 1{2^{n_{1}}}+\cdots +1+\frac 1{2^{n_{k}}})\le 1+\frac 1k
\end{equation*}
implies that $\|g\|_{E}\le 1$. Similarly $\|h\|_{F}\le 1$. As $\|f\|_{E\cdot F}\le \|g\|_{E}\|h\|_{F}$ this implies that $\|g\|_{E}= 1$ and $\|h\|_{F}= 1$. 

\end{proof}
\begin{theorem}
Let $E, F$ and $G$ be Banach function spaces with the Fatou property. Assume that $E\cdot F$ and $E\cdot G$ are product Banach function spaces such that $E\cdot F\subset E\cdot G$  with $\|h\|_{E\cdot G}\le C\|h\|_{E\cdot F}$ for all $h\in E\cdot F$. Then $F\subset G$ and $\|f\|_{G}\le C \|f\|_{F}$ for all $f\in F$.
\end{theorem}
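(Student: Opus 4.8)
The plan is to dualise the desired conclusion and prove instead the single bilinear estimate
\[
\int \phi\gamma\,d\mu\le C\,\|\phi\|_{F}\,\|\gamma\|_{G^{\prime}} \quad\text{for all }0\le\phi\in F,\ 0\le\gamma\in G^{\prime}.
\]
Since $G$ has the Fatou property we have $G=G^{\prime\prime}$ isometrically, so $\|\phi\|_{G}=\sup\{\int\phi\gamma\,d\mu:0\le\gamma\in G^{\prime},\ \|\gamma\|_{G^{\prime}}\le1\}$; hence the displayed estimate is precisely the assertion that $\phi\in G$ with $\|\phi\|_{G}\le C\|\phi\|_{F}$. Everything thus reduces to controlling the pairing of $F$ against $G^{\prime}$, and the role of $E$ will be to feed the hypothesis into this pairing.

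The key step I would isolate is the inequality
\[
\int(\psi\phi\eta\gamma)^{\frac12}\,d\mu\le\sqrt{C}\,\bigl(\|\psi\|_{E}\|\phi\|_{F}\bigr)^{\frac12}\bigl(\|\eta\|_{E^{\prime}}\|\gamma\|_{G^{\prime}}\bigr)^{\frac12}
\]
valid for all $0\le\psi\in E$, $0\le\phi\in F$, $0\le\eta\in E^{\prime}$, $0\le\gamma\in G^{\prime}$. To prove it, note that $\psi\phi\in E\cdot F\subseteq E\cdot G$ with $\|\psi\phi\|_{E\cdot G}\le C\|\psi\phi\|_{E\cdot F}\le C\|\psi\|_{E}\|\phi\|_{F}$; since $E$ and $G$ have the Fatou property and $E\cdot G$ is a product Banach function space, Theorem \ref{minimum} lets me factor $\psi\phi=pq$ with $0\le p\in E$, $0\le q\in G$ and $\|p\|_{E}\|q\|_{G}=\|\psi\phi\|_{E\cdot G}$. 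Writing $(\psi\phi\eta\gamma)^{\frac12}=(p\eta)^{\frac12}(q\gamma)^{\frac12}$ and applying the Cauchy--Schwarz inequality, followed by $\int p\eta\,d\mu\le\|p\|_{E}\|\eta\|_{E^{\prime}}$ and $\int q\gamma\,d\mu\le\|q\|_{G}\|\gamma\|_{G^{\prime}}$, gives the claim.

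Finally I would fold in Lozanovskii's theorem. Given $\phi,\gamma$ with $0<\int\phi\gamma\,d\mu<\infty$, the isometry $E\cdot E^{\prime}=L_{1}$ of \cite{Lozanovskii69} provides a factorisation $\phi\gamma=\psi\eta$ with $0\le\psi\in E$, $0\le\eta\in E^{\prime}$ and $\|\psi\|_{E}\|\eta\|_{E^{\prime}}=\int\phi\gamma\,d\mu$. Substituting this into the key inequality, the left-hand side collapses to $\int(\psi\eta)^{\frac12}(\phi\gamma)^{\frac12}\,d\mu=\int\phi\gamma\,d\mu$, while the right-hand side becomes $\sqrt{C}\,(\int\phi\gamma\,d\mu)^{\frac12}(\|\phi\|_{F}\|\gamma\|_{G^{\prime}})^{\frac12}$; cancelling one factor of $(\int\phi\gamma\,d\mu)^{\frac12}$ yields the desired bilinear estimate. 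The integrability assumption $\phi\gamma\in L_{1}$ is removed by the routine device of replacing $\phi$ by an increasing sequence of truncations with support of finite measure and passing to the limit by monotone convergence. The one genuine idea, and the step I expect to be the crux, is this last self-improvement: a naive attempt to ``divide the inclusion by $E$'' fails, because the factor produced by Theorem \ref{minimum} need not be comparable to any prescribed element of $E$, and it is precisely the symmetric use of $E$ together with $E^{\prime}$ in Lozanovskii's factorisation of the single $L_{1}$ function $\phi\gamma$ that makes both sides refer to $\int\phi\gamma\,d\mu$ and so forces the estimate.
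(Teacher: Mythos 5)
Your proposal is correct and is, at bottom, the same argument as the paper's: both proofs combine the Lozanovskii factorization of $\phi\gamma$ (the paper's $fg$) over $E\cdot E^{\prime}$ with the factorization in $E\cdot G$ of $\psi\phi$ (the paper's $f_{1}f$) supplied by the hypothesis, and then apply the Cauchy--Schwarz inequality to the square roots. The differences are purely organizational: the paper argues by contradiction with an $\epsilon$-slack factorization ($C<C_{2}<C_{1}$) where you invoke exact attainment via Theorem \ref{minimum}, and it reduces to $f\in F\cap G$ by the Fatou property where you dualize through $G=G^{\prime\prime}$ and truncate --- contrapositive restatements of the identical computation.
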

\begin{proof}
If $0\le f\in F$, then there exist $0\le f_{n}\uparrow f$ such that $f_{n}\in F\cap G$. Hence if $\|f\|_{G}\le C \|f\|_{F}$ for all $f\in F\cap G$, then the same inequality holds by the Fatou property for all $f\in F$. Assume therefore that there exists $0<f\in F\cap G$ such that $\|f\|_{G}>C\|f\|_{F}$. By normalizing we can assume that $\|f\|_{F}=1$ and thus $\|f\|_{G}>C$. Then there exists $0\le g\in G^{\prime}$ such that $C_{1}=\int fg\,d\mu>C$ and $\|g\|_{G^{\prime}}\le 1$. Now $\frac 1{C_{1}}fg\in L^{1}$ with $\|\frac 1{C_{1}}fg\|_{1}=1$, so by Lozanovskii's theorem there exist $0\le f_{1}\in E$ with $\|f_{1}\|_{E}\le 1$ and $0\le g_{1}\in E^{\prime}$ with $\|g_{1}\|_{E^{\prime}}\le 1$ such that $\frac 1{C_{1}}fg=f_{1}g_{1}$.  It follows now that
\begin{equation*}
\int f^{\frac 12}g^{\frac 12}f_{1}^{\frac 12}g_{1}^{\frac 12}\,d\mu=\frac 1{C_{1}^{\frac 12}}\int fg\,d\mu=C_{1}^{\frac 12}.
\end{equation*}
On the other hand $f_{1}f\in E\cdot F$ with $\|f_{1}f\|_{E\cdot F}\le \|f_{1}\|_{E}\|f\|_{F}\le 1$ implies that $\|f_{1}f\|_{E\cdot G}\le C$. Hence for $C<C_{2}<C_{1}$ there exist $f_{2}\in E$ with $\|f_{2}\|_{E}\le 1$ and $g_{2}\in G$ with $\|g_{2}\|_{G}\le C_{2}$ such that $f_{1}f=f_{2}g_{2}$. This implies that 
\begin{align*}
\int f^{\frac 12}g^{\frac 12}f_{1}^{\frac 12}g_{1}^{\frac 12}\,d\mu=&\int g_{2}^{\frac 12}g^{\frac 12}g_{1}^{\frac 12}f_{2}^{\frac 12}\,d\mu\le \left(\int g_{2}g\,d\mu\right)^{\frac 12} \left(\int g_{1} f_{2\,d\mu}\right)^{\frac 12} \\
&\le \|g_{2}\|_{G}^{\frac 12}\|g\|_{G^{\prime}}^{\frac 12}\|g_{1}\|_{E^{\prime}}^{\frac 12}\|f_{2}\|_{E}^{\frac 12}\le C_{2}^{\frac 12}<C_{1}^{\frac 12},
\end{align*}
which is a contradiction.
\end{proof}
\begin{corollary}\label{cancellation}
Let $E, F$ and $G$ be Banach function spaces with the Fatou property. Assume that $E\cdot F$ and $E\cdot G$ are product Banach function spaces with $E\cdot F=E\cdot G$ isomorphically (or isometrically). Then $F=G$ isomorphically (or isometrically).\end{corollary}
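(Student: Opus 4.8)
The plan is to derive the corollary directly from the preceding cancellation theorem by applying it twice, once in each direction. First I would unpack the hypothesis: saying that $E\cdot F = E\cdot G$ isomorphically means precisely that both inclusions $E\cdot F \subset E\cdot G$ and $E\cdot G \subset E\cdot F$ hold, and that there are constants $C_1, C_2 \ge 1$ with $\|h\|_{E\cdot G} \le C_1 \|h\|_{E\cdot F}$ for all $h \in E\cdot F$ and $\|h\|_{E\cdot F} \le C_2 \|h\|_{E\cdot G}$ for all $h \in E\cdot G$; in the isometric case one has $C_1 = C_2 = 1$.

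Next I would apply the preceding theorem to the inclusion $E\cdot F \subset E\cdot G$ with constant $C_1$. Since $E$, $F$, $G$ all have the Fatou property and $E\cdot F$, $E\cdot G$ are product Banach function spaces, the hypotheses are met, and the conclusion gives $F \subset G$ with $\|f\|_G \le C_1 \|f\|_F$ for every $f \in F$. Then I would invoke the same theorem again, but now with the roles of $F$ and $G$ interchanged, applied to the inclusion $E\cdot G \subset E\cdot F$ with constant $C_2$. This is legitimate because the theorem is symmetric in its second and third arguments and the hypotheses hold just as before; the conclusion now reads $G \subset F$ with $\|f\|_F \le C_2 \|f\|_G$ for every $f \in G$.

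Combining the two inclusions yields $F = G$ as sets, with equivalent norms $\|f\|_G \le C_1 \|f\|_F \le C_1 C_2 \|f\|_G$, i.e. $F = G$ isomorphically. In the isometric case $C_1 = C_2 = 1$ gives $\|f\|_G \le \|f\|_F$ and $\|f\|_F \le \|f\|_G$ simultaneously, so $\|f\|_F = \|f\|_G$ and $F = G$ isometrically. There is no genuine obstacle here: the entire content is carried by the cancellation theorem, and the only point requiring a moment of care is to observe that the theorem may be applied symmetrically in $F$ and $G$, and to translate the two-sided isomorphism hypothesis into the two one-sided norm inequalities that feed into it.
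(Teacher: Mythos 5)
Your proposal is correct and is exactly the argument the paper intends: the corollary is stated without proof precisely because it follows by applying the preceding theorem twice, once to each inclusion, just as you do. Your unpacking of the isomorphism into the two one-sided norm inequalities and the symmetric second application are the whole content, and the isometric case with $C_1=C_2=1$ comes out as you describe.
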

Note that the above corollary is no longer true if we drop the assumption of the Fatou property, we have e.g. that $\ell_{1}\cdot c_{0}=\ell_{1}\cdot \ell_{\infty}=\ell_{1}$. However the same proofs would give that $F^{\prime \prime}\subset G^{\prime\prime}$ in the above theorem and $F^{\prime\prime}=G^{\prime\prime}$ in the above corollary, if we omit the hypothesis of the Fatou property for the spaces $F$ and $G$. Moreover in the above theorem and corollary we can omit the hypothesis that $E$ has the Fatou property by using Lozanovskii's approximate factorization (this introduces a $1+\epsilon$ term in the factorization). The proof of the above theorem was inspired by a proof of \cite{Berezhnoi05}, who  used it to give an alternative proof of the uniqueness theorem of  the Calderon-Lozaonovskii interpolation method as given in \cite{Cwikel03}.     The following proposition provides a strengthening   of the above corollary and seems to be new even for Calderon spaces.
\begin{proposition}\label{uniqueness} Let $E_{1}, E_{2}, F_{1}$ and $F_{1}$ be Banach function spaces
with the Fatou property. Assume that $E_{1}\cdot F_{1}$ and $E_{2}\cdot F_{2}$ are product Banach function spaces such that $E_{1}\cdot F_{1}=E_{2}\cdot F_{2}$ isomorphically (or isometrically) and $E_{1}\subset E_{2}, F_{1}\subset F_{2}$ (or additionally with norm 1 inclusions). Then $E_{1}=E_{2}$ and $F_{1}=F_{2}$ isomorphically (or isometrically).  

\end{proposition}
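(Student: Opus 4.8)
The plan is to reduce everything to the cancellation result of Corollary~\ref{cancellation} by introducing the two \emph{mixed} products $E_{1}\cdot F_{2}$ and $E_{2}\cdot F_{1}$ and squeezing each of them between $E_{1}\cdot F_{1}$ and $E_{2}\cdot F_{2}$. I would treat the isometric case in detail; the isomorphic case runs identically once the inclusion constants are tracked, using the isomorphic version of Corollary~\ref{cancellation}.

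First I would record the elementary monotonicity of the pointwise product. Since $E_{1}\subset E_{2}$ with norm-one inclusion, every factorization $|h|=gf$ with $g\in E_{1}$, $0\le f\in F_{2}$ satisfies $\|g\|_{E_{2}}\le\|g\|_{E_{1}}$, so taking the infimum gives $E_{1}\cdot F_{2}\subset E_{2}\cdot F_{2}$ with $\|h\|_{E_{2}\cdot F_{2}}\le\|h\|_{E_{1}\cdot F_{2}}$. Likewise, since $F_{1}\subset F_{2}$ with norm-one inclusion, every $F_{1}$-factorization of $h$ is an $F_{2}$-factorization with smaller outer norm, whence $\rho_{E_{1}\cdot F_{2}}(h)\le\rho_{E_{1}\cdot F_{1}}(h)=\|h\|_{E_{1}\cdot F_{1}}$ for all $h\in E_{1}\cdot F_{1}$. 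Combining these two facts with the universal inequality $\|h\|_{E_{1}\cdot F_{2}}\le\rho_{E_{1}\cdot F_{2}}(h)$ and the hypothesis $\|h\|_{E_{1}\cdot F_{1}}=\|h\|_{E_{2}\cdot F_{2}}$ produces the chain
\[
\|h\|_{E_{1}\cdot F_{1}}=\|h\|_{E_{2}\cdot F_{2}}\le\|h\|_{E_{1}\cdot F_{2}}\le\rho_{E_{1}\cdot F_{2}}(h)\le\|h\|_{E_{1}\cdot F_{1}},
\]
so all four quantities coincide for every $h$.

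The main point, and the step I expect to be the real obstacle, is that $E_{1}\cdot F_{2}$ is \emph{a priori} not assumed to be a product Banach function space: only $E_{1}\cdot F_{1}$ and $E_{2}\cdot F_{2}$ are. The squeeze above is precisely what removes this difficulty, since it forces $\rho_{E_{1}\cdot F_{2}}=\|\cdot\|_{E_{1}\cdot F_{2}}$, i.e. $\rho_{E_{1}\cdot F_{2}}$ is a norm. By the theorem that completeness may be dropped from the definition, $E_{1}\cdot F_{2}$ is then automatically a product Banach function space, and the displayed chain shows $E_{1}\cdot F_{2}=E_{1}\cdot F_{1}$ isometrically.

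Finally I would invoke Corollary~\ref{cancellation} twice. Cancelling the common factor $E_{1}$ in the identity $E_{1}\cdot F_{1}=E_{1}\cdot F_{2}$ (all of $E_{1},F_{1},F_{2}$ have the Fatou property and both products are product Banach function spaces) yields $F_{1}=F_{2}$. Running the same argument with the two factors interchanged, that is, forming $E_{2}\cdot F_{1}$ and establishing
\[
\|h\|_{E_{1}\cdot F_{1}}=\|h\|_{E_{2}\cdot F_{2}}\le\|h\|_{E_{2}\cdot F_{1}}\le\rho_{E_{2}\cdot F_{1}}(h)\le\|h\|_{E_{1}\cdot F_{1}},
\]
shows $E_{2}\cdot F_{1}=E_{1}\cdot F_{1}$ isometrically and that $E_{2}\cdot F_{1}$ is a product Banach function space; using $E\cdot F=F\cdot E$ to put $F_{1}$ in the role of the common factor and cancelling it then gives $E_{1}=E_{2}$. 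This completes the proof, and in the isomorphic case the same three displays hold up to fixed constants, so the isomorphic form of Corollary~\ref{cancellation} delivers the conclusion.
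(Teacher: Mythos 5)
Your isometric argument is correct, and it takes a genuinely different route from the paper. You form the mixed products $E_{1}\cdot F_{2}$ and $E_{2}\cdot F_{1}$ and use the squeeze to show each is \emph{isometrically} equal to $E_{1}\cdot F_{1}$, hence (since $\rho$ then coincides with the convex-hull norm) a product Banach function space, and you cancel via Corollary~\ref{cancellation}. The paper never touches the mixed products: it passes to half powers, noting that $E_{i}^{\frac 12}F_{i}^{\frac 12}$ is the $2$-convexification of the product space $E_{i}\cdot F_{i}$, so that $E_{1}^{\frac 12}F_{1}^{\frac 12}=E_{2}^{\frac 12}F_{2}^{\frac 12}$, and then applies cancellation to the inclusion $E_{2}^{\frac 12}\cdot F_{2}^{\frac 12}=E_{1}^{\frac 12}\cdot F_{1}^{\frac 12}\subset E_{2}^{\frac 12}\cdot F_{1}^{\frac 12}$, where \emph{both} sides are automatically product Banach function spaces with the Fatou property because Calderon spaces always are; this gives $F_{2}^{\frac 12}\subset F_{1}^{\frac 12}$, i.e. $F_{2}\subset F_{1}$, and equality follows. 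The point of the paper's detour through half powers is exactly that it never has to verify that any mixed product is a product space.

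That is also where your proof has a genuine gap: the isomorphic case. The three displays do "hold up to fixed constants," but then the crucial intermediate conclusion fails. Equivalence $\|h\|_{E_{1}\cdot F_{2}}\le\rho_{E_{1}\cdot F_{2}}(h)\le C\|h\|_{E_{1}\cdot F_{2}}$ does \emph{not} make $\rho_{E_{1}\cdot F_{2}}$ a norm: being a product Banach function space means $B_{E_{1}}\cdot B_{F_{2}}$ is convex, an isometric property which your squeeze forces only when $C=1$. So in the isomorphic case you have not shown that $E_{1}\cdot F_{2}$ (or $E_{2}\cdot F_{1}$) is a product Banach function space, and Corollary~\ref{cancellation} — whose isomorphic form still requires \emph{both} products to be product Banach function spaces — cannot be invoked. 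The repair requires an additional idea: either pass to half powers as the paper does, or sharpen the cancellation theorem by inspecting its proof, which only uses the product-space property of the larger (target) space $E\cdot G$ (the factorization $f_{1}f=f_{2}g_{2}$ is taken there), while the smaller space $E\cdot F$ needs only the submultiplicativity of the convex-hull norm; with that sharpening you could apply it with source $E_{1}\cdot F_{2}$ and target $E_{1}\cdot F_{1}$ to get $F_{2}\subset F_{1}$. As written, however, the isomorphic half of the proposition is not proved.
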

\begin{proof}
Observe that $E_{2}^{\frac 12}\cdot F_{1}^{\frac 12}\supset E_{1}^{\frac 12}\cdot F_{1}^{\frac 12}=E_{2}^{\frac 12}\cdot F_{2}^{\frac 12}$ implies by the above corollary $F_{1}^{\frac 12}\supset F_{2}^{\frac 12}$. Hence $F_{1}\supset F_{2}$. This implies that $F_{1}$ is isomorphic (or isometric) to $F_{2}$. Similarly the conclusion holds for $E_{1}$ and $E_{2}$, which completes the proof.
\end{proof}     
We will now show how the above results can be used to derive an alternative proof of Lozanovskii's duality theorem (see \cite{Lozanovskii69}, \cite{Lozanovskii73}, \cite{Raynaud97}, \cite{Reisner88}, and \cite{Reisner93} for additional information concerning this duality in the more general case) for Calderon product spaces of Banach function spaces with the Fatou property.
\begin{theorem}\label{multiplier} Let $E$ and $F$ be Banach function spaces with the Fatou property. Assume that $E\cdot F$ is a product Banach function space. Then $F=M(E, E\cdot F)$ isometrically.

\end{theorem}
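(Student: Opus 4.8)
The plan is to deduce the identity $M(E,E\cdot F)=F$ from the cancellation Corollary~\ref{cancellation}, by exhibiting $F$ and $N:=M(E,E\cdot F)$ as two factors of one and the same product space relative to $E$, rather than by estimating the $F$-norm directly. First I would record the trivial inclusion: if $g\in F$ and $f\in E$, then $fg\in E\cdot F$ with $\|fg\|_{E\cdot F}\le\|f\|_{E}\|g\|_{F}$, so $g\in N$ and $\|g\|_{N}\le\|g\|_{F}$; thus $F\subset N$ with a norm-one inclusion. The whole proof then reduces to the reverse inclusion, which I would obtain by cancelling $E$ from an identity of products.

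The central step is to prove $E\cdot N=E\cdot F$ \emph{isometrically}. As sets this is immediate from the two inclusions already available: if $w=fg$ with $f\in E$ and $g\in N$, then $w\in E\cdot F$ by the very definition of $N$, so $E\cdot N\subset E\cdot F$; and $F\subset N$ gives $E\cdot F\subset E\cdot N$. For the norms I would compare the product quasi-norms. On one hand, every factorization $|w|=fg$ with $0\le g\in N$ satisfies $\|w\|_{E\cdot F}=\|fg\|_{E\cdot F}\le\|f\|_{E}\|g\|_{N}$, so taking the infimum gives $\|w\|_{E\cdot F}\le\rho_{E\cdot N}(w)$. On the other hand, every factorization with $0\le g\in F$ is admissible for $N$ as well, and there $\|g\|_{N}\le\|g\|_{F}$, so $\rho_{E\cdot N}(w)\le\rho_{E\cdot F}(w)=\|w\|_{E\cdot F}$, the last equality because $E\cdot F$ is a product Banach function space. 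Hence $\rho_{E\cdot N}=\|\cdot\|_{E\cdot F}$.

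Since $\rho_{E\cdot N}$ thus coincides with the genuine norm $\|\cdot\|_{E\cdot F}$, it is in particular a norm, and the completeness theorem (the one asserting that $\rho_{E\cdot F}$ being a norm forces $E\cdot F$ to be a product Banach function space) applies with $N$ in place of $F$, so $E\cdot N$ is a product Banach function space equal to $E\cdot F$ isometrically. To invoke Corollary~\ref{cancellation} I also need $N$ to have the Fatou property, which I would check directly: if $0\le g_{n}\uparrow g$ with $\sup_{n}\|g_{n}\|_{N}<\infty$, then for each $0\le f\in B_{E}$ we have $0\le fg_{n}\uparrow fg$, and the Fatou property of $E\cdot F$ (which holds since $E$ and $F$ have it) gives $\|fg\|_{E\cdot F}\le\sup_{n}\|g_{n}\|_{N}$, whence $g\in N$ with $\|g\|_{N}=\sup_{n}\|g_{n}\|_{N}$. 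With $E$, $F$, $N$ all carrying the Fatou property and $E\cdot F=E\cdot N$ isometrically as product Banach function spaces, Corollary~\ref{cancellation} yields $F=N=M(E,E\cdot F)$ isometrically.

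The main obstacle I anticipate is conceptual rather than computational: the tempting route is to prove $\|g\|_{F}\le\|g\|_{M(E,E\cdot F)}$ by a direct duality estimate, writing $\|g\|_{F}=\sup\{\int gh:\ h\in B_{F'}\}$ and factoring $h$ through $B_{E}$ and $B_{(E\cdot F)'}=B_{M(E,F')}$; but carrying this out cleanly amounts to the factorization $F'=E\cdot M(E,F')$, which is itself an instance of the very division problem under study and leads in a circle. Routing instead through the product identity $E\cdot N=E\cdot F$ and the cancellation corollary is precisely what sidesteps that circularity, and the only genuinely delicate verifications are the two-sided norm comparison giving $\rho_{E\cdot N}=\|\cdot\|_{E\cdot F}$ and the Fatou property of the multiplier space $N$, both of which are short once the right identity is isolated.
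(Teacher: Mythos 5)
Your proposal is correct and follows essentially the same route as the paper: both record the norm-one inclusion $F\subset M(E,E\cdot F)$, show that $E\cdot M(E,E\cdot F)$ is a product Banach function space isometrically equal to $E\cdot F$ (the paper via the unit-ball identity $B_{E}\cdot B_{M(E,E\cdot F)}=B_{E}\cdot B_{F}$, you via the equivalent quasi-norm identity $\rho_{E\cdot N}=\|\cdot\|_{E\cdot F}$), and then cancel $E$ by Corollary~\ref{cancellation}. Your explicit verification that $M(E,E\cdot F)$ inherits the Fatou property is a useful detail the paper leaves implicit, but it does not constitute a different approach.
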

\begin{proof} Let $f\in B_{E}$ and $g\in F$. Then $\|fg\|_{E\cdot F}\le \|f\|_{E}\|g\|_{F}\le \|g\|_{F}$ shows that $F\subset M(E, E\cdot F)$ and $B_{F}\subset B_{M(E, F)}$. Hence $B_{E}\cdot B_{F}\subset B_{E}\cdot B_{M(E, F)}$. On the other hand by the definition of the norm of $M(E, E\cdot F)$ we have that $B_{E}\cdot B_{M(E, E\cdot F)}\subset B_{E\cdot F}=B_{E}\cdot B_{F}$. Hence $B_{E}\cdot B_{F}=B_{E}\cdot B_{M(E, E\cdot F)}$. This shows that $E\cdot M(E, E\cdot F)$ is a product Banach function space isometric to $E\cdot F$. From the above corollary we conclude that $F=M(E, E\cdot F)$ isometrically.
\end{proof}
	We note that if $E$ is $p$-convex and $F$ is $p^{\prime}$-convex, then the above theorem reproves Theorem 3.5 of \cite{Cwikel03}, which was the main tool used there to prove the uniqueness theorem of the Calderon-Lozaonovskii interpolation method. The next theorem is a special case of Lozanovskii's duality theorem. 

\begin{theorem}\label{dual-p-convex} Let $F$ be a Banach function space  and let $1<p<\infty$. Then $(F^{\frac 1p})^{\prime}=(F^{\prime})^{\frac 1p}L_{1}^{\frac 1{p^{\prime}}}=(F^{\prime})^{\frac 1p}\cdot L_{p^{\prime}}$.

\end{theorem}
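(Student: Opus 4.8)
The plan is to pin down $(F^{\prime})^{\frac1p}\cdot L_{p^{\prime}}$ as exactly the factor that multiplies $F^{\frac1p}$ to $L_1$, and then read off the associate space through the multiplier theorem. First I would dispose of the second equality, which is purely formal: since $|f|^{p^{\prime}}\in L_1$ exactly when $f\in L_{p^{\prime}}$, with $\||f|^{p^{\prime}}\|_1^{\frac1{p^{\prime}}}=\|f\|_{p^{\prime}}$, we have $L_1^{\frac1{p^{\prime}}}=L_{p^{\prime}}$ isometrically, and the Calderon space $(F^{\prime})^{\frac1p}L_1^{\frac1{p^{\prime}}}$ coincides with the pointwise product $(F^{\prime})^{\frac1p}\cdot L_1^{\frac1{p^{\prime}}}$ by the discussion opening Section 2. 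So the entire content lies in the first equality $(F^{\frac1p})^{\prime}=(F^{\prime})^{\frac1p}\cdot L_{p^{\prime}}$.

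The crux is the factorization identity
\[
F^{\frac1p}\cdot\big((F^{\prime})^{\frac1p}\cdot L_{p^{\prime}}\big)=L_1
\]
as a product Banach function space, which I would establish in two moves. First, $F^{\frac1p}\cdot(F^{\prime})^{\frac1p}=L_p$ isometrically: for $a\in F^{\frac1p}$ and $c\in(F^{\prime})^{\frac1p}$ one has $|a|^p\in F$, $|c|^p\in F^{\prime}$, so by Lozanovskii's theorem $|a|^p|c|^p\in F\cdot F^{\prime}=L_1$ with $\int|ac|^p\le\||a|^p\|_F\||c|^p\|_{F^{\prime}}$, giving $\|ac\|_p\le\|a\|_{F^{\frac1p}}\|c\|_{(F^{\prime})^{\frac1p}}$; conversely, factoring $f^p=uv$ through $F\cdot F^{\prime}=L_1$ (the infimum attained via Theorem \ref{minimum} when $F$ has the Fatou property, and approximately otherwise) and taking $p$-th roots recovers the reverse inequality. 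Second, writing $f=f^{\frac1p}\cdot f^{\frac1{p^{\prime}}}$ and feeding the $L_p$-factor $f^{\frac1p}$ into the identity just proved realizes every $f\in L_1$ as a product $a\,c\,d$ with $a\in F^{\frac1p}$, $c\in(F^{\prime})^{\frac1p}$, $d\in L_{p^{\prime}}$ and $\|a\|_{F^{\frac1p}}\|c\|_{(F^{\prime})^{\frac1p}}\|d\|_{p^{\prime}}\le\|f\|_1$ (up to $1+\epsilon$), while the reverse bound is a single application of H\"older; this yields the displayed triple-product identity.

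With this in hand I would conclude through Theorem \ref{multiplier}. Since $(F^{\prime})^{\frac1p}\cdot L_{p^{\prime}}$ is a Calderon space, it is a product Banach function space with the Fatou property, and applying Theorem \ref{multiplier} with $E=F^{\frac1p}$ and second factor $(F^{\prime})^{\frac1p}\cdot L_{p^{\prime}}$ gives
\[
(F^{\prime})^{\frac1p}\cdot L_{p^{\prime}}=M\big(F^{\frac1p},\,F^{\frac1p}\cdot[(F^{\prime})^{\frac1p}\cdot L_{p^{\prime}}]\big)=M(F^{\frac1p},L_1)=(F^{\frac1p})^{\prime},
\]
where the last step uses $\|g\|_{M(E,L_1)}=\sup_{\|f\|_E\le1}\int|fg|=\|g\|_{E^{\prime}}$. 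Equivalently one may compare $F^{\frac1p}\cdot(F^{\frac1p})^{\prime}=L_1=F^{\frac1p}\cdot[(F^{\prime})^{\frac1p}\cdot L_{p^{\prime}}]$ and cancel via Corollary \ref{cancellation}. The main obstacle is the bookkeeping of Fatou hypotheses: Theorem \ref{multiplier} and Corollary \ref{cancellation} are stated for Fatou spaces, whereas $F$ here is arbitrary. I would handle this by replacing $F$ with its second associate $F^{\prime\prime}$, which leaves both sides unchanged since $(F^{\frac1p})^{\prime\prime}=(F^{\prime\prime})^{\frac1p}$ and $F^{\prime\prime\prime}=F^{\prime}$, or else by invoking the remark following Corollary \ref{cancellation}, which licenses dropping the Fatou assumption on the cancelled factor $F^{\frac1p}$ at the cost of Lozanovskii's approximate factorization.
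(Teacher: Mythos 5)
Your proposal is correct and follows essentially the same route as the paper: compute $F^{\frac 1p}\cdot\bigl((F^{\prime})^{\frac 1p}\cdot L_{p^{\prime}}\bigr)=L_{1}$ isometrically via Lozanovskii's factorization and H\"older, compare with $F^{\frac 1p}\cdot(F^{\frac 1p})^{\prime}=L_{1}$ (Lozanovskii again), and cancel the common factor $F^{\frac 1p}$; your detour through Theorem \ref{multiplier} is only a cosmetic variant, since that theorem is itself proved by the same cancellation (Corollary \ref{cancellation}), and your two-step computation of the product (first $F^{\frac 1p}\cdot(F^{\prime})^{\frac 1p}=L_{p}$, then multiplying by $L_{p^{\prime}}$) is the paper's one-line identity $(F\cdot F^{\prime})^{\frac 1p}\cdot L_{p^{\prime}}=L_{1}^{\frac 1p}\cdot L_{p^{\prime}}=L_{1}$ written out. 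One caution on the Fatou bookkeeping: of your two proposed fixes, only the second (invoking the remark after Corollary \ref{cancellation}, which permits the cancelled factor $F^{\frac 1p}$ to lack the Fatou property at the cost of Lozanovskii's approximate factorization) is admissible, and it is exactly what the paper does --- the two remaining factors $(F^{\frac 1p})^{\prime}$ and $(F^{\prime})^{\frac 1p}\cdot L_{p^{\prime}}$ do have the Fatou property, being respectively an associate space and a Calderon space built from Fatou spaces. Your first fix, replacing $F$ by $F^{\prime\prime}$, is circular within the paper's logical order: the identity $(F^{\frac 1p})^{\prime\prime}=(F^{\prime\prime})^{\frac 1p}$ that it requires is precisely Corollary \ref{bidual}, which the paper deduces from Theorem \ref{dual-p-convex} itself (via the Lozanovskii duality theorem), so it cannot be assumed when proving this theorem.
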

\begin{proof}
From Lozanovskii's factorization theorem we have isometrically $F^{\frac 1p}(F^{\frac 1p})^{\prime}=L_{1}$. On the other hand $F^{\frac 1p}\left((F^{\prime})^{\frac 1p}\cdot L_{p^{\prime}}\right)=(F\cdot F^{\prime})^{\frac 1p}\cdot L_{p^{\prime}}=L_{1}^{\frac 1p}\cdot L_{p^{\prime}}=L_{1}$ isometrically. Hence by the above corollary $(F^{\frac 1p})^{\prime}=(F^{\prime})^{\frac 1p}\cdot L_{p^{\prime}}$ isometrically.
Note that both spaces $(F^{\frac 1p})^{\prime}$ and $(F^{\prime})^{\frac 1p}\cdot L_{p^{\prime}}$ have the Fatou property, and by the remark above we do not need the Fatou property for $E$. 
\end{proof}
In the following theorem we provide a simple proof of Lozanovskii's duality theorem for the case of Banach function spaces with the Fatou property.
\begin{theorem} Let  $E$ and $F$ be Banach function spaces with the Fatou property and let $1<p<\infty$. Then
\begin{equation*}
\left(E^{\frac 1p}F^{\frac 1{p^{\prime}}}\right)^{\prime}=(E^{\prime})^{\frac 1p}(F^{\prime})^{\frac 1{p^{\prime}}}.
\end{equation*}
\end{theorem}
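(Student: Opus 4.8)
The plan is to imitate the strategy used in the proof of Theorem~\ref{dual-p-convex}: exhibit a single space that factors $L_{1}$ in two different ways and then invoke the cancellation result. Set $G=E^{\frac 1p}F^{\frac 1{p^{\prime}}}$, which is a product Banach function space with the Fatou property. By Lozanovskii's factorization theorem we have isometrically
\[ G\cdot G^{\prime}=L_{1}. \]
The theorem will follow once I show that $(E^{\prime})^{\frac 1p}(F^{\prime})^{\frac 1{p^{\prime}}}$ provides the \emph{same} factorization, i.e.\ that $G\cdot\left((E^{\prime})^{\frac 1p}(F^{\prime})^{\frac 1{p^{\prime}}}\right)=L_{1}$ isometrically; the cancellation corollary (Corollary~\ref{cancellation}), applied with common factor $G$, then forces $G^{\prime}=(E^{\prime})^{\frac 1p}(F^{\prime})^{\frac 1{p^{\prime}}}$, which is precisely the assertion.

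The heart of the argument is the commutation identity
\[ E^{\frac 1p}F^{\frac 1{p^{\prime}}}\cdot (E^{\prime})^{\frac 1p}(F^{\prime})^{\frac 1{p^{\prime}}}=(E\cdot E^{\prime})^{\frac 1p}(F\cdot F^{\prime})^{\frac 1{p^{\prime}}}. \]
Combined with Lozanovskii's theorem $E\cdot E^{\prime}=F\cdot F^{\prime}=L_{1}$ and the elementary computation $L_{1}^{\frac 1p}L_{1}^{\frac 1{p^{\prime}}}=L_{1}$ (take $u=v=|f|$ in the definition for one estimate and apply H\"older for the reverse), this yields exactly the desired factorization. The inclusion ``$\subset$'' in the commutation identity, together with the norm estimate in that direction, is immediate: if $|f|=|a||b|$ with $|a|=|g|^{\frac 1p}|h|^{\frac 1{p^{\prime}}}$ and $|b|=|g^{\prime}|^{\frac 1p}|h^{\prime}|^{\frac 1{p^{\prime}}}$, then $|f|=(|g||g^{\prime}|)^{\frac 1p}(|h||h^{\prime}|)^{\frac 1{p^{\prime}}}$ with $|g||g^{\prime}|\in E\cdot E^{\prime}$ and $|h||h^{\prime}|\in F\cdot F^{\prime}$.

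For the reverse inclusion I would start from $|f|=|u|^{\frac 1p}|v|^{\frac 1{p^{\prime}}}$ with $u\in E\cdot E^{\prime}$ and $v\in F\cdot F^{\prime}$, and factor $u=gg^{\prime}$, $v=hh^{\prime}$ using that $E\cdot E^{\prime}$ and $F\cdot F^{\prime}$ are product Banach function spaces; reassembling $|f|=\bigl(|g|^{\frac 1p}|h|^{\frac 1{p^{\prime}}}\bigr)\bigl(|g^{\prime}|^{\frac 1p}|h^{\prime}|^{\frac 1{p^{\prime}}}\bigr)$ then exhibits $f$ in the left-hand product. The one point needing care is that this bookkeeping be isometric and not merely an equality of sets; here I would use Theorem~\ref{minimum} to choose the factorizations of $u$ and $v$ so that $\|u\|_{L_{1}}=\|g\|_{E}\|g^{\prime}\|_{E^{\prime}}$ and $\|v\|_{L_{1}}=\|h\|_{F}\|h^{\prime}\|_{F^{\prime}}$ are attained, and then track the norms through the identity. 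I expect this verification of the isometric commutation identity to be the main (and essentially only) obstacle; once it is in place, the Fatou property of every space involved ($E,F$ by hypothesis, $E^{\prime},F^{\prime}$ and the associate $G^{\prime}$ automatically, and the Calderon products because their constituents have it) makes Corollary~\ref{cancellation} directly applicable and completes the proof.
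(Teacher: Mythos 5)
Your proof is correct, but it follows a genuinely different route from the paper's. The paper deduces the theorem from its multiplier machinery: by Proposition \ref{dual} the K\"othe dual $\left(E^{\frac 1p}F^{\frac 1{p^{\prime}}}\right)^{\prime}$ equals $M(E^{\frac 1p}, (F^{\frac 1{p^{\prime}}})^{\prime})$; Theorem \ref{dual-p-convex} rewrites $(F^{\frac 1{p^{\prime}}})^{\prime}=(F^{\prime})^{\frac 1{p^{\prime}}}\cdot L_{p}$; Lozanovskii's factorization rewrites $L_{p}=E^{\frac 1p}\cdot (E^{\prime})^{\frac 1p}$; and finally Theorem \ref{multiplier} identifies $M(E^{\frac 1p}, E^{\frac 1p}\cdot (E^{\prime})^{\frac 1p}(F^{\prime})^{\frac 1{p^{\prime}}})$ with $(E^{\prime})^{\frac 1p}(F^{\prime})^{\frac 1{p^{\prime}}}$. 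You instead bypass multiplier spaces entirely and run the two-factorizations-plus-cancellation argument (which is how the paper proves the special case, Theorem \ref{dual-p-convex}) directly on the general Calderon product: both $G^{\prime}$ and $H=(E^{\prime})^{\frac 1p}(F^{\prime})^{\frac 1{p^{\prime}}}$ multiply $G=E^{\frac 1p}F^{\frac 1{p^{\prime}}}$ isometrically onto $L_{1}$, so Corollary \ref{cancellation} forces $G^{\prime}=H$. Your verification of the isometric identity $G\cdot H=L_{1}$ is sound: one inequality is the regrouping $(|g|^{\frac 1p}|h|^{\frac 1{p^{\prime}}})(|g^{\prime}|^{\frac 1p}|h^{\prime}|^{\frac 1{p^{\prime}}})=(|g||g^{\prime}|)^{\frac 1p}(|h||h^{\prime}|)^{\frac 1{p^{\prime}}}$ together with H\"older's inequality and the definition of the associate norm, and the other follows, as you say, by factoring $u=gg^{\prime}\in E\cdot E^{\prime}$ and $v=hh^{\prime}\in F\cdot F^{\prime}$ exactly (Theorem \ref{minimum}, or Lozanovskii with the Fatou property, applies since $E,F,E^{\prime},F^{\prime}$ all have it), since then $a=|g|^{\frac 1p}|h|^{\frac 1{p^{\prime}}}\in G$ and $b=|g^{\prime}|^{\frac 1p}|h^{\prime}|^{\frac 1{p^{\prime}}}\in H$ satisfy $\|a\|_{G}\|b\|_{H}\le (\|g\|_{E}\|g^{\prime}\|_{E^{\prime}})^{\frac 1p}(\|h\|_{F}\|h^{\prime}\|_{F^{\prime}})^{\frac 1{p^{\prime}}}=\|u\|_{1}^{\frac 1p}\|v\|_{1}^{\frac 1{p^{\prime}}}$, so the norms multiply out exactly. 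As for what each approach buys: yours is symmetric in $E$ and $F$, self-contained (it needs only Lozanovskii's theorem, the commutation identity, and Corollary \ref{cancellation}), and it subsumes Theorem \ref{dual-p-convex} as the special case $F=L_{\infty}$ rather than depending on it; the paper's proof is shorter on the page because it reuses Theorems \ref{dual-p-convex} and \ref{multiplier}, and it showcases the identification $(E\cdot F)^{\prime}=M(E,F^{\prime})$ that organizes the rest of the paper. At bottom the two are cousins, since Theorem \ref{multiplier} is itself proved from Corollary \ref{cancellation}; both arguments ultimately rest on Lozanovskii factorization plus cancellation.
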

\begin{proof}
From Proposition \ref{dual} it follows that $\left(E^{\frac 1p}F^{\frac 1{p^{\prime}}}\right)^{\prime}=M(E^{\frac 1p}, (F^{\frac 1{p^{\prime}}})^{\prime})$, which by the above theorem is equal to $M(E^{\frac 1p}, (F^{\prime})^{\frac 1{p^{\prime}}}\cdot L_{p} )$. Now $M(E^{\frac 1p}, (F^{\prime})^{\frac 1{p^{\prime}}}\cdot L_{p} )=M(E^{\frac 1p}, E^{\frac 1p}(E^{\prime})^{\frac 1p}\cdot(F^{\prime})^{\frac 1{p^{\prime}}} )=M(E^{\frac 1p}, E^{\frac 1p}\cdot(E^{\prime})^{\frac 1p}(F^{\prime})^{\frac 1{p^{\prime}}} )= (E^{\prime})^{\frac 1p}(F^{\prime})^{\frac 1{p^{\prime}}}$ by Theorem \ref{multiplier}.
\end{proof}

\begin{corollary}\label{bidual} Let $F$ be a Banach function space  and let $1<p<\infty$. Then $(F^{\frac 1p})^{\prime\prime}=(F^{\prime\prime})^{\frac 1p}$.
\end{corollary}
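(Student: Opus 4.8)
The plan is to obtain $(F^{\frac 1p})^{\prime\prime}$ by iterating the associate-space operation: first compute $(F^{\frac 1p})^{\prime}$ via Theorem \ref{dual-p-convex}, and then take its associate via the preceding duality theorem for Calderon spaces. The crucial observation is that once we pass to an associate space the Fatou property comes for free, so that no Fatou hypothesis on $F$ itself is needed.

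First I would apply Theorem \ref{dual-p-convex}, which holds for an arbitrary Banach function space $F$, to write
\[(F^{\frac 1p})^{\prime}=(F^{\prime})^{\frac 1p}L_{1}^{\frac 1{p^{\prime}}}.\]
This exhibits $(F^{\frac 1p})^{\prime}$ as a Calderon intermediate space $E^{\frac 1p}G^{\frac 1{p^{\prime}}}$ with $E=F^{\prime}$ and $G=L_{1}$.

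Next I would apply the preceding duality theorem for Calderon spaces, $\left(E^{\frac 1p}G^{\frac 1{p^{\prime}}}\right)^{\prime}=(E^{\prime})^{\frac 1p}(G^{\prime})^{\frac 1{p^{\prime}}}$, to the pair $E=F^{\prime}$, $G=L_{1}$. That theorem requires both $E$ and $G$ to have the Fatou property, and this is exactly the point that needs checking; here it holds automatically, since every associate space $F^{\prime}$ has the Fatou property and $L_{1}$ does as well. Using $(F^{\prime})^{\prime}=F^{\prime\prime}$ and $L_{1}^{\prime}=L_{\infty}$ this gives
\[(F^{\frac 1p})^{\prime\prime}=\left((F^{\prime})^{\frac 1p}L_{1}^{\frac 1{p^{\prime}}}\right)^{\prime}=(F^{\prime\prime})^{\frac 1p}L_{\infty}^{\frac 1{p^{\prime}}}.\]

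Finally I would rewrite $(F^{\prime\prime})^{\frac 1p}L_{\infty}^{\frac 1{p^{\prime}}}$ as the $p$-convexification $(F^{\prime\prime})^{\frac 1p}$, using the identity $E^{\frac 1p}L_{\infty}^{\frac 1{p^{\prime}}}=E^{\frac 1p}$ recorded in Section 2, which yields $(F^{\frac 1p})^{\prime\prime}=(F^{\prime\prime})^{\frac 1p}$. The only genuine obstacle is the Fatou hypothesis in the second step; routing the computation through $F^{\prime}$ rather than attempting to dualize $F^{\frac 1p}=F^{\frac 1p}L_{\infty}^{\frac 1{p^{\prime}}}$ directly (which would demand that $F$ itself have the Fatou property) is precisely what makes the argument valid for an arbitrary Banach function space $F$.
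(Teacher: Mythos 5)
Your proposal is correct and follows essentially the same route as the paper: apply Theorem \ref{dual-p-convex} to get $(F^{\frac 1p})^{\prime}=(F^{\prime})^{\frac 1p}L_{1}^{\frac 1{p^{\prime}}}$, then dualize via the Lozanovskii duality theorem applied to the pair $F^{\prime}$, $L_{1}$, whose Fatou properties hold automatically. Your explicit remark that routing through the associate space $F^{\prime}$ is what removes any Fatou hypothesis on $F$ itself is exactly the point the paper relies on (and states only implicitly), so the two arguments coincide.
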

\begin{proof} By Theorem \ref{dual-p-convex} we have that $(F^{\frac 1p})^{\prime}=(F^{\prime})^{\frac 1p}L_{1}^{\frac 1{p^{\prime}}}$. Hence by the above theorem $(F^{\frac 1p})^{\prime\prime}=(F^{\prime\prime})^{\frac 1p}(L_{\infty})^{\frac 1{p^{\prime}}}=(F^{\prime\prime})^{\frac 1p}$.

\end{proof}

\section{Factors of Banach function spaces}
Besides considering products of Banach function spaces one can consider the problem of division. We will call the Banach function space $E$ a factor of the Banach function space $G$ if there exist a Banach function space $F$ such that $E\cdot F=G$.  Lozanovskii's factorization theorem says that every Banach function space is a factor of $L_{1}(X, \mu)$. As the example  $\ell_{1}\cdot c_{0}=\ell_{1}\cdot \ell_{\infty}=\ell_{1}$ shows, the space $F$ is not unique, if no additional requirements are imposed. Now $E\cdot F=G$ implies that $F\subset M(E,G)$ and thus $E\cdot M(E,G)=G$. Therefore it is natural to assume that $F=M(E,G)$ and that the norm on $F$ is given by the operator norm of multiplying $E$ into $G$. We will first determine the factors of $L_{p}$ and for that reason we first derive some elementary properties of $M(E, L_{p})$.
\begin{proposition} Let $E$ be a Banach function space. Then $M(E, L_{p})$ is a $p$-convex Banach function space with convexity constant one. 
\end{proposition}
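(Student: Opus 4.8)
The plan is to verify the $p$-convexity inequality directly from the definition of the operator norm on $M(E,L_{p})$, exploiting that the $L_{p}$-norm interacts perfectly with the $\ell_{p}$-combination $\left(\sum_{k}|g_{k}|^{p}\right)^{1/p}$ through the additivity of the integral. That $M(E,L_{p})$ is a Banach function space is already part of the setup of Section 1, so the only substantive point is the convexity estimate with constant one.

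Concretely, fix $1\le p<\infty$ and $g_{1},\dots,g_{n}\in M(E,L_{p})$, and set $G=\left(\sum_{k=1}^{n}|g_{k}|^{p}\right)^{1/p}$. For an arbitrary $f\in E$ I would first record the pointwise identity $|f|^{p}G^{p}=\sum_{k=1}^{n}|fg_{k}|^{p}$, which is immediate from $G^{p}=\sum_{k}|g_{k}|^{p}$. Integrating and using additivity of the integral gives
\begin{equation*}
\|fG\|_{L_{p}}^{p}=\int |f|^{p}G^{p}\,d\mu=\sum_{k=1}^{n}\int |fg_{k}|^{p}\,d\mu=\sum_{k=1}^{n}\|fg_{k}\|_{L_{p}}^{p}.
\end{equation*}
For $\|f\|_{E}\le 1$ each summand satisfies $\|fg_{k}\|_{L_{p}}\le \|g_{k}\|_{M(E,L_{p})}$ by the very definition of the multiplier norm, so the right-hand side is at most $\sum_{k}\|g_{k}\|_{M(E,L_{p})}^{p}$. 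Taking the supremum over the unit ball of $E$ yields
\begin{equation*}
\|G\|_{M(E,L_{p})}^{p}\le \sum_{k=1}^{n}\|g_{k}\|_{M(E,L_{p})}^{p},
\end{equation*}
which is precisely $p$-convexity with constant one (and in particular confirms $G\in M(E,L_{p})$, so the quantity is meaningful).

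I do not anticipate a genuine obstacle: the entire argument collapses to additivity of the integral combined with the homogeneity of the operator norm, and the constant one is automatic because no rearrangement, duality, or H\"older step is required. The only adaptation worth flagging is the case $p=\infty$, where one takes $G=\sup_{k}|g_{k}|$ and replaces the integral identity by the pointwise identity $|f|\sup_{k}|g_{k}|=\sup_{k}|fg_{k}|$; taking essential suprema gives $\|fG\|_{L_{\infty}}=\max_{k}\|fg_{k}\|_{L_{\infty}}$, and the same supremum-over-the-unit-ball argument delivers $\infty$-convexity with constant one. In either case the conclusion is exactly the asserted $p$-convexity of $M(E,L_{p})$.
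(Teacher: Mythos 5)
Your proof is correct and takes essentially the same route as the paper's: both rest on the pointwise identity $\bigl(\sum_{k}|g_{k}|^{p}\bigr)^{1/p}|f|=\bigl(\sum_{k}|fg_{k}|^{p}\bigr)^{1/p}$, the additivity of the integral (so that the $L_{p}$-norm of the $\ell_{p}$-combination equals the $\ell_{p}$-sum of the norms $\|fg_{k}\|_{p}$), and then the definition of the multiplier norm. The only differences are cosmetic: you phrase the computation through integrals and a supremum over $B_{E}$ rather than a single chain of norm identities, and you additionally treat $p=\infty$, which the paper does not address.
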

\begin{proof}
Let $0\le f_{k}\in M(E, L_{p})$ for $1\le k\le n$ and let $0\le g\in E$. Then 
\begin{align*}
\left\|\left(\sum_{k=1}^{n}f_{k}^{p}\right)^{\frac 1p}g\right\|_{p}&=\left\| \left(\sum_{k=1}^{n} (f_{k}g)^{p}\right)^{\frac 1p}\right\|_{p}\\
										&=\left(\sum_{k=1}^{n}\|f_{k}g\|_p^p \right)^{\frac 1p}\\
										&\le \left(\sum_{k=1}^{n}\|f_{k}\|_{M(E, L_{p})}^{p}\right)^{\frac 1p}\|g\|_{E}.
\end{align*}
Hence $M(E, L_{p})$ is $p$-convex with convexity constant one.
\end{proof}
In Theorem \ref{dual-p-convex} we saw a description of the K\"othe dual of the $p$-convexification of a Banach function space. In the following theorem we characterize the K\"othe dual of the $p$-concavification of a Banach function space by means of the $p$-concavification of the $p$-convex space $M(E, L_{p})$.
\begin{theorem} Let $E$ be a Banach function space and assume $E$ is $p$-convex for some $p>1$ with convexity constant one. Then the K\"othe dual $(E^{p})^{\prime}$ of the $p$-concavification $E^{p}$ 
of $E$ is isometric with the $p$-concavification $M(E, L_{p})^{p}$ of $M(E, L_{p})$.
\end{theorem}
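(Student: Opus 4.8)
The plan is to reduce the statement to the single elementary identity $E' = M(E, L_1)$ and then to observe that passing to the $p$-concavification commutes with forming multiplier spaces. First I would recall that for any Banach function space $G$ one has $G' = M(G, L_1)$ isometrically: the membership $f \in G'$ means $\int |fg|\,d\mu < \infty$ for every $g \in G$, which is exactly $fg \in L_1$ for every $g \in G$, and the associate norm $\sup\{\int |fg|\,d\mu : \|g\|_G \le 1\}$ is literally $\sup\{\|fg\|_1 : \|g\|_G \le 1\} = \|f\|_{M(G, L_1)}$. Applying this with $G = E^p$ (which is a Banach function space, since $E$ is $p$-convex with constant one) gives $(E^p)' = M(E^p, L_1)$. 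Since $L_1$ is itself the $p$-concavification $(L_p)^p$ of $L_p$, the theorem becomes the commutation rule $M(E^p, (L_p)^p) = M(E, L_p)^p$.

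Next I would establish the general pointwise identity $M(A^p, B^p) = M(A, B)^p$ for $A$ that is $p$-convex with constant one, applied with $A = E$ and $B = L_p$; here the Proposition immediately preceding guarantees that $M(E, L_p)$ is $p$-convex with constant one, so that $M(E, L_p)^p$ is a genuine Banach function space. Every $0 \le f \in A^p$ can be written as $f = u^p$ with $u = f^{1/p} \in A$, and then for $g \ge 0$ one has $gf \in B^p$ if and only if $(gf)^{1/p} = g^{1/p} u \in B$. Thus $g \in M(A^p, B^p)$ holds exactly when $g^{1/p} u \in B$ for every $u \in A$, which is precisely the condition $g^{1/p} \in M(A, B)$, i.e. $g \in M(A, B)^p$. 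This settles the equality as sets, and specializing to $A = E$, $B = L_p$ and using $(L_p)^p = L_1$ identifies the two spaces in the statement.

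For the isometry I would carry the same substitution through the norms. Using $\|f\|_{A^p} = \|u\|_A^p$ with $u = f^{1/p}$, the constraint $\|f\|_{A^p} \le 1$ is the same as $\|u\|_A \le 1$, while $\|gf\|_{B^p} = \|(gf)^{1/p}\|_B^p = \|g^{1/p} u\|_B^p$. Hence
\[
\|g\|_{M(A^p, B^p)} = \sup_{\|u\|_A \le 1} \|g^{1/p} u\|_B^p = \Big(\sup_{\|u\|_A \le 1} \|g^{1/p} u\|_B\Big)^p = \|g^{1/p}\|_{M(A, B)}^p = \|g\|_{M(A, B)^p},
\]
where the middle equality only uses that $t \mapsto t^p$ is increasing. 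Taking $A = E$ and $B = L_p$ then yields $(E^p)' = M(E, L_p)^p$ isometrically.

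The computation is essentially bookkeeping, so I do not expect a serious obstacle; the only points that require care are keeping the $p$-th powers and their reciprocals consistent in the substitution $f = u^p$, and justifying that the $p$-th power may be pulled outside the supremum. The two structural inputs I rely on are the elementary identity $E' = M(E, L_1)$ and the fact, from the Proposition immediately preceding, that $M(E, L_p)$ is $p$-convex with convexity constant one, which is exactly what makes $M(E, L_p)^p$ well defined as a normed lattice.
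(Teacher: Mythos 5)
Your proposal is correct and is essentially the paper's own argument in modular form: the paper proves the theorem by the single chain of equalities $\|f\|_{(E^{p})^{\prime}}=\sup\{\int f|g|^{p}\,d\mu:\|g\|_{E}\le 1\}=\sup\{\,\||f|^{\frac 1p}|g|\,\|_{p}^{p}:\|g\|_{E}\le 1\}=\||f|^{\frac 1p}\|_{M(E,L_{p})}^{p}=\|f\|_{M(E,L_{p})^{p}}$, which is exactly what your three identities $(E^{p})^{\prime}=M(E^{p},L_{1})$, $L_{1}=(L_{p})^{p}$, and $M(A^{p},B^{p})=M(A,B)^{p}$ give when unwound. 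The added generality of your commutation rule is harmless, since its proof is the same substitution $f=u^{p}$ that drives the paper's one-line computation.
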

\begin{proof}
Let $0\le f\in L_{0}$. Then we have
\begin{align*}
\|f\|_{(E^{p})^{\prime}}&=\sup\left( \int f |g|^{p}\,d\mu: \||g|^{p}\|_{E^{p}}\le 1\right)\\
				&=\sup\left(\|\,|f|^{\frac 1p}|g|\,\|_{p}^{p}: \|g\|_{E}\le 1\right)\\
				&=\|f^{\frac 1p}\|_{M(E, L_{p})}^{p}\\
				&=\|f\|_{M(E, L_{p})^{p}}
\end{align*}
This identity proves the theorem.
\end{proof}
\begin{theorem}\label{X^{YY}} Let $E$ be a Banach function space with the Fatou property and let $1<p<\infty$. Then the following are equivalent.
\begin{itemize}
\item[(i)] $E$ is $p$-convex with convexity constant one.
\item[(ii)] $E\cdot M(E, L_{p})$ is a  product Banach function space and $E\cdot M(E, L_{p})=L_{p}$.
\item[(iii)] $M(M(E, L_{p}), L_{p})=E$.
\end{itemize}
\end{theorem}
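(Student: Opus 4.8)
The plan is to prove the cycle of implications $(i)\Rightarrow(ii)\Rightarrow(iii)\Rightarrow(i)$. Throughout I would abbreviate $F=M(E,L_{p})$ and lean on two facts already established: the Proposition that every multiplier space $M(E,L_{p})$ is $p$-convex with convexity constant one, and the preceding theorem, which gives $(E^{p})^{\prime}=M(E,L_{p})^{p}$ isometrically. Convexifying the latter identity yields the workhorse $M(E,L_{p})=\bigl((E^{p})^{\prime}\bigr)^{1/p}$, while $p$-convexity of $E$ with constant one is exactly the statement that $E^{p}$ is a Banach function space with $(E^{p})^{1/p}=E$ isometrically.

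For $(i)\Rightarrow(ii)$ I would set $G=E^{p}$ and $H=(E^{p})^{\prime}$, so that $E=G^{1/p}$ and $F=H^{1/p}$. The first step is the elementary observation that convexification carries pointwise products to pointwise products: a factorization $|f|=gh$ with $0\le g\in G^{1/p}$, $0\le h\in H^{1/p}$ corresponds bijectively to $|f|^{p}=ab$ with $a=g^{p}\in G$, $b=h^{p}\in H$, and under this correspondence $\|g\|_{G^{1/p}}\,\|h\|_{H^{1/p}}=\bigl(\|a\|_{G}\,\|b\|_{H}\bigr)^{1/p}$. Taking the infimum over all factorizations then gives $\rho_{E\cdot F}(f)=\rho_{G\cdot H}\bigl(|f|^{p}\bigr)^{1/p}$. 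By Lozanovskii's theorem $G\cdot H=E^{p}\cdot(E^{p})^{\prime}=L_{1}$ is a product Banach function space, so $\rho_{G\cdot H}=\|\cdot\|_{1}$, and hence $\rho_{E\cdot F}(f)=\bigl(\int|f|^{p}\,d\mu\bigr)^{1/p}=\|f\|_{L_{p}}$. In particular $\rho_{E\cdot F}$ is a norm, so by the first theorem of Section~2 (completeness may be dropped once $\rho_{E\cdot F}$ is a norm) the product $E\cdot M(E,L_{p})$ is a product Banach function space, and the computation shows it is isometrically $L_{p}$.

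For $(ii)\Rightarrow(iii)$ I would first check that $F=M(E,L_{p})$ has the Fatou property: if $0\le g_{n}\uparrow g$ with $\sup_{n}\|g_{n}\|_{F}<\infty$, then for every $f\in E$ one has $|f|g_{n}\uparrow|f|g$ with $L_{p}$-norms bounded by $\sup_{n}\|g_{n}\|_{F}\,\|f\|_{E}$, so Fatou in $L_{p}$ places $|f|g\in L_{p}$ and bounds $\|g\|_{F}$. Thus $E$ and $F$ are Fatou spaces with $F\cdot E=E\cdot F=L_{p}$ a product Banach function space, and applying Theorem~\ref{multiplier} to the product $F\cdot E$ (that is, with the two factors interchanged) gives $E=M(F,F\cdot E)=M(F,L_{p})=M(M(E,L_{p}),L_{p})$, which is $(iii)$. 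Finally $(iii)\Rightarrow(i)$ is immediate: by the Proposition quoted above, $M(F,L_{p})$ is $p$-convex with convexity constant one for any Banach function space $F$, and since $(iii)$ identifies $E$ isometrically with $M(M(E,L_{p}),L_{p})$, the space $E$ is itself $p$-convex with convexity constant one.

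The step demanding the most care is the reduction in $(i)\Rightarrow(ii)$: one must verify not just the set identity $(G\cdot H)^{1/p}=G^{1/p}\cdot H^{1/p}$ but the exact matching of the factorization quasi-norms, so that $\rho_{E\cdot F}$ reproduces the $L_{p}$-norm on the nose rather than merely up to a constant. Routing this through the explicit bijection between factorizations above, together with the fact that $G\cdot H=L_{1}$ is a genuine product space (so that its factorization quasi-norm coincides with its norm), is what forces the constant to be exactly one. The only other point needing attention is the Fatou property of $M(E,L_{p})$, which is precisely what licenses the symmetric application of Theorem~\ref{multiplier} in $(ii)\Rightarrow(iii)$.
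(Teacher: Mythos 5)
Your proof is correct, and for the implication (ii) $\Rightarrow$ (iii) it takes a genuinely different route from the paper. Your (i) $\Rightarrow$ (ii) is the paper's argument with the details filled in: the paper invokes the identity $(E^{p})^{\prime}=M(E,L_{p})^{p}$, Lozanovskii's factorization $E^{p}\cdot (E^{p})^{\prime}=L_{1}$, and then simply says ``taking the $p$-convexification on both sides''; your explicit bijection between factorizations, which shows $\rho_{E\cdot M(E,L_{p})}(f)=\rho_{L_{1}}(|f|^{p})^{1/p}=\|f\|_{p}$ exactly (not merely up to a constant), is precisely what that phrase compresses. Your (iii) $\Rightarrow$ (i) is identical to the paper's. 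The divergence is in (ii) $\Rightarrow$ (iii): you verify that $M(E,L_{p})$ inherits the Fatou property from $L_{p}$ and then apply Theorem \ref{multiplier} with the two factors interchanged, reading $E=M(M(E,L_{p}),L_{p})$ directly off the identity $M(E,L_{p})\cdot E=L_{p}$. The paper instead forms the product $M(E,L_{p})\cdot L_{p^{\prime}}$ (a Calderon-type product of a $p$-convex and a $p^{\prime}$-convex space, hence a product Banach function space), uses (ii) to get $E\cdot\bigl(M(E,L_{p})\cdot L_{p^{\prime}}\bigr)=L_{1}$ and cancellation to identify $M(E,L_{p})\cdot L_{p^{\prime}}=E^{\prime}$, and then computes $M(M(E,L_{p}),L_{p})=\bigl(M(E,L_{p})\cdot L_{p^{\prime}}\bigr)^{\prime}=E^{\prime\prime}=E$ via Lozanovskii's duality theorem. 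Your route is shorter and reuses a theorem already proved; the paper's route yields in passing the dual factorization $E^{\prime}=M(E,L_{p})\cdot L_{p^{\prime}}$, which is of independent interest and foreshadows Theorem \ref{duality}. Both arguments ultimately rest on the same cancellation machinery (Corollary \ref{cancellation}), since the proof of Theorem \ref{multiplier} uses it; and both need the Fatou property of $M(E,L_{p})$, which you prove and the paper only asserts.
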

\begin{proof}
Assume first that $E$  is $p$-convex with convexity constant one. Then by the above theorem $(E^{p})^{\prime}=M(E, L_{p})^{p}$. Thus by Lozanovskii's theorem $E^{p}\cdot M(E,L_{p} )^{p}=L_{1}$. Now taking the $p$-convexication on both sides we see that (ii) holds. Now assume (ii) holds. Then $M(E, L_{p})$ is a $p$-convex Banach function space with the Fatou property and $L_{p^{\prime}}$ is $p^{\prime}$-convex with the Fatou property, so $M(E, L_{p})\cdot L_{p^{\prime}}$ is a product Banach function space with the Fatou property. Now by (ii) we have that $E\cdot (M(E, L_{p})\cdot L_{p^{\prime}})=L_{p}\cdot L_{p^{\prime}}=L_{1}$. Hence $E^{\prime}=M(E, L_{p})\cdot L_{p^{\prime}}$. From this it follows, using Lozanovskii's duality theorem,  that
\begin{align*}
M(M(E, L_{p}), L_{p})&=[\left(M(E, L_{p})^{p}\right)^{\prime}]^{\frac 1p}\\
				&=[\left(M(E, L_{p})^{p}\right)^{\prime}]^{\frac 1p}\left(L_{\infty}\right)^{\frac 1{p^{\prime}}}\\
				&=\left(M(E, L_{p})\cdot L_{p^{\prime}}\right)^{\prime}=E^{\prime\prime}=E.
\end{align*}
Hence (iii) holds. Assume now that (iii) holds. Then $E$ is $p$-convex, since $M(F, L_{p})$ is $p$-convex for any Banach function space $F$. 
\end{proof}
Note that the $p=1$ case of the above theorem is Lozanovskii's factorization theorem, which also shows that as stated we can not drop the assumption that $E$ has the Fatou property. The $p=1$ case and the above proof suggest however that there might be a version without the Fatou property, if we replace in (iii) the space $E$ on the right hand side by $E^{\prime\prime}$. We were not able to establish this in general, but one important special case follows. 

\begin{theorem} Let $E$ be a Banach function space with order continuous norm and let $1<p<\infty$. Then the following are equivalent.
\begin{itemize}
\item[(i)] $E$ is $p$-convex with convexity constant one.
\item[(ii)] $E\cdot M(E, L_{p})$ is a  product Banach function space and $E\cdot M(E, L_{p})=L_{p}$.
\item[(iii)] $M(M(E, L_{p}), L_{p})=E^{\prime\prime}$.
\end{itemize}
\end{theorem}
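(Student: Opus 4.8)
The plan is to run the same cycle of implications (i)$\Rightarrow$(ii)$\Rightarrow$(iii)$\Rightarrow$(i) used for Theorem~\ref{X^{YY}}, tracking where the Fatou property of $E$ entered and replacing it by order continuity. The one structural change is that order continuity of the norm only yields an isometric, order dense embedding $E\subset E''$ rather than the identity $E=E''$: since the norm is order continuous we have $E^{*}=E'$, and hence for $f\in E$
\[
\|f\|_{E}=\sup\Bigl\{\int_{X}|fg|\,d\mu:\ \|g\|_{E'}\le 1\Bigr\}=\|f\|_{E''}.
\]
This isometry is precisely what forces $E''$, rather than $E$, to appear on the right-hand side of (iii). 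Throughout I would use that $M(E,L_{p})$ is $p$-convex with convexity constant one and, because $L_{p}$ has the Fatou property, that $M(E,L_{p})$ also has the Fatou property; consequently $M(E,L_{p})^{p}$ is a Banach function space with the Fatou property and $\bigl(M(E,L_{p})^{p}\bigr)^{1/p}=M(E,L_{p})$ isometrically.

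For (i)$\Rightarrow$(ii) I would repeat the argument of Theorem~\ref{X^{YY}} verbatim: it uses only that $E$ is $p$-convex with convexity constant one, via the identity $(E^{p})'=M(E,L_{p})^{p}$ and Lozanovskii's factorization $E^{p}\cdot M(E,L_{p})^{p}=L_{1}$, followed by taking $p$-convexifications. No Fatou hypothesis enters, so the implication holds unchanged under order continuity.

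For (ii)$\Rightarrow$(iii) I would again follow Theorem~\ref{X^{YY}}. Since $M(E,L_{p})$ is $p$-convex and $L_{p'}$ is $p'$-convex (both with constant one and the Fatou property), the product $M(E,L_{p})\cdot L_{p'}=\bigl(M(E,L_{p})^{p}\bigr)^{1/p}L_{1}^{1/p'}$ is a Calder\'on product, hence a product Banach function space with the Fatou property. From (ii) and associativity of the pointwise product, $E\cdot\bigl(M(E,L_{p})\cdot L_{p'}\bigr)=L_{p}\cdot L_{p'}=L_{1}=E\cdot E'$. I then cancel the common factor $E$: although $E$ need not have the Fatou property, the remark following Corollary~\ref{cancellation} permits dropping the Fatou hypothesis on the cancelled space by means of Lozanovskii's approximate factorization, and since the two surviving factors $E'$ and $M(E,L_{p})\cdot L_{p'}$ both have the Fatou property, this yields the exact equality $E'=M(E,L_{p})\cdot L_{p'}$. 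From here the computation of $M\bigl(M(E,L_{p}),L_{p}\bigr)$ proceeds exactly as in Theorem~\ref{X^{YY}}: the identity $(G^{p})'=M(G,L_{p})^{p}$ applied to $G=M(E,L_{p})$ gives $M\bigl(M(E,L_{p}),L_{p}\bigr)=\bigl[(M(E,L_{p})^{p})'\bigr]^{1/p}$, and Lozanovskii's duality theorem for Calder\'on products identifies this with $\bigl(M(E,L_{p})\cdot L_{p'}\bigr)'=(E')'=E''$; the only difference from the Fatou case is that the final identification $E''=E$ is no longer available.

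For (iii)$\Rightarrow$(i) I would note that $M(F,L_{p})$ is $p$-convex with convexity constant one for every Banach function space $F$; applying this with $F=M(E,L_{p})$ and using (iii), the space $E''$ is $p$-convex with convexity constant one. Finally I transfer this to $E$ using the isometric embedding above: for $f_{1},\dots,f_{n}\in E$ the element $\bigl(\sum_{k}|f_{k}|^{p}\bigr)^{1/p}\le\sum_{k}|f_{k}|$ again lies in $E$ (as $E$ is an order ideal), and since the norms of $E$ and $E''$ agree on $E$,
\[
\Bigl\|\bigl(\textstyle\sum_{k}|f_{k}|^{p}\bigr)^{1/p}\Bigr\|_{E}=\Bigl\|\bigl(\textstyle\sum_{k}|f_{k}|^{p}\bigr)^{1/p}\Bigr\|_{E''}\le\Bigl(\textstyle\sum_{k}\|f_{k}\|_{E''}^{p}\Bigr)^{1/p}=\Bigl(\textstyle\sum_{k}\|f_{k}\|_{E}^{p}\Bigr)^{1/p},
\]
so $E$ is $p$-convex with convexity constant one. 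I expect the delicate step to be the cancellation inside (ii)$\Rightarrow$(iii): one must legitimately cancel the possibly non-Fatou factor $E$ through approximate factorization while verifying that the two surviving factors $E'$ and $M(E,L_{p})\cdot L_{p'}$ do have the Fatou property, since it is exactly this that pins the answer down to $E''$.
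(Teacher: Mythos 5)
Your proof is correct and follows essentially the same route as the paper's: the paper proves this theorem by inspecting the proof of Theorem \ref{X^{YY}}, noting that only (iii)$\Rightarrow$(i) needs modification, and using exactly your observation that order continuity gives $\|f\|_{E}=\|f\|_{E^{\prime\prime}}$ for all $f\in E$, so that $p$-convexity of $E^{\prime\prime}$ transfers to $E$. Your explicit handling of the cancellation step in (ii)$\Rightarrow$(iii) — invoking the remark after Corollary \ref{cancellation} that the Fatou hypothesis on the cancelled factor can be dropped via Lozanovskii's approximate factorization, while the surviving factors $E^{\prime}$ and $M(E,L_{p})\cdot L_{p^{\prime}}$ do have the Fatou property — is a detail the paper leaves implicit, but it is the same argument.
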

\begin{proof}
An inspection of the above proof shows that the only step which needs modification is the proof that (iii) implies (i). From (iii) it follows that $E^{\prime\prime}$ is $p$-convex. This implies that $E$ is $p$-convex, since the order continuity of the norm on $E$ implies that $\|f\|_{E}=\|f\|_{E^{\prime\prime}}$ for all $f\in E$.
\end{proof}

As is clear from the above proof the difficulty in establishing the above theorem without any additional assumption on $E$ is to show that if $E^{\prime\prime}$ is $p$-convex, then $E$ is $p$-convex. One can observe that if $E^{\prime\prime}$ is $p$-convex, then $(E^{\prime\prime})^{*}$ is $p^{\prime}$-concave, so also $E^{\prime}=E^{\prime\prime\prime}$ is $p^{\prime}$-concave. The difficulty is then to show that this implies that $E$ is $p$-convex without any additional hypotheses.

\begin{corollary} Let  $E$ and $F$ be Banach function spaces.  Assume that $F=M(E, L_{p})$ and $E=M(F, L_{p})$ for some $1<p<\infty$. Then $E\cdot F$ is a product Banach function space and $E\cdot F=L_{p}$.
\end{corollary}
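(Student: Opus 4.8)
The plan is to reduce the statement to Theorem~\ref{X^{YY}}, whose hypothesis is that $E$ has the Fatou property and whose condition (iii) reads $M(M(E,L_{p}),L_{p})=E$. The hypotheses of the corollary are tailor-made for this: substituting $M(E,L_{p})=F$ gives
\[
M(M(E,L_{p}),L_{p})=M(F,L_{p})=E,
\]
which is exactly condition (iii). So once I know that $E$ has the Fatou property, Theorem~\ref{X^{YY}} yields condition (ii), namely that $E\cdot M(E,L_{p})$ is a product Banach function space equal to $L_{p}$; replacing $M(E,L_{p})$ by $F$ then gives $E\cdot F=L_{p}$, as desired.

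The only point that needs an argument is therefore the Fatou property of $E$. Here I would use the general fact that a multiplier space into $L_{p}$ always has the Fatou property, independently of its first argument. Indeed, if $G$ is any Banach function space and $0\le g_{n}\uparrow g$ a.e. with $\sup_{n}\|g_{n}\|_{M(G,L_{p})}=C<\infty$, then for every $f\in G$ with $\|f\|_{G}\le 1$ we have $|f|g_{n}\uparrow |f|g$ a.e. with $\||f|g_{n}\|_{p}\le C$, so the Fatou property of $L_{p}$ forces $|f|g\in L_{p}$ and $\||f|g\|_{p}\le C$; hence $g\in M(G,L_{p})$ with $\|g\|_{M(G,L_{p})}\le C$. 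Applying this with $G=F$ shows that $E=M(F,L_{p})$ has the Fatou property (and symmetrically so does $F=M(E,L_{p})$, though only the Fatou property of $E$ is needed).

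With these two ingredients in hand the proof is immediate: $E$ has the Fatou property, condition (iii) of Theorem~\ref{X^{YY}} holds by the displayed identity, and hence condition (ii) holds, so $E\cdot F=E\cdot M(E,L_{p})=L_{p}$ is a product Banach function space. I do not expect any genuine obstacle here; the entire content is the verification of the Fatou property, and the rest is a direct citation of Theorem~\ref{X^{YY}}. One could alternatively bypass (iii) and invoke condition (i): since $E=M(F,L_{p})$, the earlier proposition shows that $E$ is $p$-convex with convexity constant one, and Theorem~\ref{X^{YY}} (i)$\Rightarrow$(ii) again gives the conclusion. Either route works; I would present the one through (iii), since it uses the hypotheses most directly.
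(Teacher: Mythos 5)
Your proposal is correct and is essentially the paper's own argument: the paper likewise deduces the Fatou property of $E=M(F,L_{p})$ from the Fatou property of $L_{p}$ and then cites Theorem \ref{X^{YY}}. The only cosmetic difference is that the paper enters that theorem through condition (i) (the $p$-convexity of $E=M(F,L_{p})$ with constant one, which is exactly the alternative route you mention at the end), while your primary route enters through condition (iii); since the theorem asserts the equivalence of (i), (ii) and (iii), the two entries are interchangeable.
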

\begin{proof}
Since $L_{p}$ has the Fatou property it follows that $M(E, L_{p})$ and $M(F, L_{p})$ have the Fatou property. Moreover $E=M(F, L_{p})$ implies that $E$ is $p$-convex and the result follows.
\end{proof}
\begin{remark}
The above corollary can be rephrased as follows. If $K$ and $L$ are unconditional convex bodies in $L_{0}$, which are both maximal for the inclusion $K\cdot L\subset B_{L_{p}}$, then $K\cdot L=B_{L_{p}}$. This result for unconditional convex bodies in $\mathbb R^{n}$ was proved by Bollobas and Leader in \cite{Bollobas93} (see also \cite{Bollobas95}) by a completely different method. We also note that the above Theorem \ref{X^{YY}} provides a partial answer to Question 2 of \cite{Maligranda89}, where it was asked (with a different notation): for which Banach function spaces $E$ and  $G$ with $G\neq L_{1}$ is it true that $M(M(E, G), G)=E$. We will provide a more general answer later on.
 \end{remark}
 One can conjecture more general versions of the above corollary and in fact this was done for finite dimesional unconditional bodies  in \cite{Bollobas93} and \cite{Bollobas95}. One of such conjectures was answered in the negative in Theorem 6 of \cite{Bollobas95}. We will present the answer to another such conjecture, Conjecture 7 of \cite{Bollobas95}, which was already answered in the negative in \cite{Bollobas00}. Our example is essentially the same as the one in \cite{Bollobas00}, but does not use any graph theoretical techniques.
 
 \begin{example}
Let $X=\mathbb R^{3}$ with the counting measure. Define $E=\mathbb R^{3}$ with the norm $\|x\|_{E}=\max\{|x_{1}|+|x_{2}|, |x_{3}|\}$ and $G=\mathbb R^{3}$ with the norm $\|z\|_{G}=|z_{1}|+\max\{|z_{2}|, |z_{3}|\}$. Let $F=M(E, G)$. To compute $\|y\|_{F}$ we shall use the well-known  fact that a convex function attains its maximum on a compact convex set at an extreme point of the compact convex set.
\begin{align*}
\|y\|_{F}&=\max\{|x_{1}y_{1}|+\max\{|x_{2}y_{2}|, |x_{3}y_{3}|\}: |x_{3}|\le 1, |x_{1}|+|x_{2}|\le 1\}\\
	     &=\max\{|x_{1}y_{1}|+\max\{|x_{2}y_{2}|, |y_{3}|\}:  |x_{1}|+|x_{2}|\le 1\}\\
	    &=\max\{|y_{1}|+|y_{3}|, \max\{|y_{2}|, |y_{3}|\}\}=\max\{|y_{1}|+|y_{3}|, |y_{2}|\}.
\end{align*}
Noting the symmetry in the norms of $E$ and $F$ we see immediately that $E=M(F, G)$. We shall show now that $B_{E}\cdot B_{F}$ is not convex, so that $E\cdot F$ is not a product Banach function space. 
Observe that $(1, 0, 0)=(1, 0, 0)\cdot (1,1, 0)\in B_{E}\cdot B_{F}$ and $(0,1,1)=(0, 1, 1)\cdot (0, 1,1)\in B_{E}\cdot B_{F}$. We claim that $(\frac 12, \frac 12, \frac 12) =\frac 12(1, 0, 0)+\frac 12(0, 1, 1)\notin B_{E}\cdot B_{F}$, while it is clear that $(\frac 12, \frac 12, \frac 12)\in B_{G}$. Assume $(\frac 12, \frac 12, \frac 12)=(x_{1}y_{1}, x_{2}y_{2}, x_{3}y_{3})$, where $0\le (x_{1}, x_{2}, x_{3})\in B_{E}$ and $0\le (y_{1}, y_{2}, y_{3})\in B_{F}$. Then $\|x\|_{E}\le 1$ and $\|y\|_{F}\le 1$ imply that $x_{i}\le1$ and $y_{i}\le 1$ for $i=1, 2, 3$. Now $x_{2}y_{2}=\frac 12$ implies that either $x_{2}=1$ and $y_{2}=\frac 12$, or $x_{2}=\frac 12$ and $y_{2}=1$. If $x_{2}=1$, then $x_{1}=0$, contradicting $x_{1}y_{1}=\frac 12$ and if $x_{2}=\frac 12$, then $x_{1}=\frac 12$. so $y_{1}=1$ which implies $y_{3}=0$. contradicting $x_{3}y_{3}=\frac 12$. Hence $(\frac 12, \frac 12, \frac 12)\notin B_{E}\cdot B_{F}$. We also observe also that $B_{G}$ is the convex hull of $B_{E}\cdot B_{F}$. To see this  one can check very easily that the intersections of $B_{E}\cdot B_{F}$ and $B_{G}$ with each of the coordinate planes coincide. Now  the positive part of the ball $B_{G}$ is the convex hull of the intersections of $B_{E^{+}}\cdot B_{F^{+}}$ with each of the coordinate planes and the line segment connecting $(1, 0, 0)$ and $(0, 1,1)$, so $B_{G}$ is the convex hull of $B_{E}\cdot B_{G}$. We illustrate this by the following picture, which shows part of  $B_{E}\cdot B_{F}$.
\end{example}
To get a dual version of Theorem \ref{X^{YY}} we present first a more general duality result.
\begin{theorem}\label{duality} Let $E$ and $F$ be Banach function spaces withe the Fatou property. If $E\cdot F=G$ is a product Banach function space, then $E\cdot G^{\prime}$ is a product Banach function space and $E\cdot G^{\prime}=F^{\prime}$.
\end{theorem}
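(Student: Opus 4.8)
The plan is to reduce the statement to the cancellation Corollary~\ref{cancellation} after multiplying through by $F$, using Proposition~\ref{dual} and Theorem~\ref{multiplier} to pin down the associate space of $E\cdot G'$. First I would record the easy half-inclusion: since $G'=(E\cdot F)'=M(E,F')$ by Proposition~\ref{dual}, for $g\in M(E,F')$ with $\|g\|_{M(E,F')}\le 1$ and $f\in B_E$ we have $\|fg\|_{F'}\le 1$, so $B_E\cdot B_{G'}\subset B_{F'}$ and hence $E\cdot G'\subset F'$ with $\|h\|_{F'}\le\|h\|_{E\cdot G'}$. Moreover $M\bigl(E,(G')'\bigr)=M(E,G'')=M(E,G)=F$, where I use that $G$ has the Fatou property (being a product of Fatou spaces) and Theorem~\ref{multiplier}. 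Since $F$ is saturated, Proposition~\ref{dual} then shows that $E\cdot G'$ is normable and that
\begin{equation*}
(E\cdot G')'=M\bigl(E,(G')'\bigr)=M(E,G)=F .
\end{equation*}
Taking associates once more gives $(E\cdot G')''=F'$, so the entire problem collapses to showing that $E\cdot G'$ coincides with its double associate, that is, that $E\cdot G'$ is a product Banach function space (equivalently, that its norm has the Fatou property) equal to $F'$.

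For the main step I would exploit Lozanovskii's theorem in the form $G\cdot G'=L_1$ and $F\cdot F'=L_1$ isometrically. Writing $G=F\cdot E$ and using associativity of the pointwise product, one expects
\begin{equation*}
F\cdot(E\cdot G')=(F\cdot E)\cdot G'=G\cdot G'=L_1=F\cdot F',
\end{equation*}
both being product Banach function spaces, so that Corollary~\ref{cancellation}, applied to cancel the common factor $F$, yields $E\cdot G'=F'$. What makes this rigorous—and what I expect to be the main obstacle—is showing that $E\cdot G'$ is itself a product Banach function space. By the corollary characterising product Banach function spaces through $2$-convexity, this is equivalent to $E^{\frac12}(G')^{\frac12}$ being $2$-convex with convexity constant one, equivalently (passing to associates via Lozanovskii duality) to $(E')^{\frac12}G^{\frac12}$ being $2$-concave with constant one. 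It is exactly here that the hypothesis that $G=E\cdot F$ is a product Banach function space, i.e. that $E^{\frac12}F^{\frac12}$ is $2$-convex with constant one, has to be fed in; I do not expect the formal associativity manipulation to be available until this structural fact is in hand.

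An equivalent and perhaps more transparent formulation of the same obstacle is to prove directly that $B_E\cdot B_{G'}=B_{F'}$. The inclusion $\subset$ is the easy half above, and for $\supset$ one must factor an arbitrary $0\le h\in F'$ with $\|h\|_{F'}\le 1$ as $h=fg$ with $f\in B_E$ and $g\in B_{G'}=B_{M(E,F')}$. Once such an exact factorization is produced, $B_E\cdot B_{G'}$ is convex, being equal to the convex set $B_{F'}$; then $\rho_{E\cdot G'}$ is a norm, and $E\cdot G'$ is automatically a product Banach function space equal to $F'$, with no separate completeness or associativity argument required. To obtain the factorization I would combine the exact Lozanovskii factorization of $L_1$ through $G\cdot G'$ with the attainment result Theorem~\ref{minimum} for the product $G=E\cdot F$, peeling an $E$-factor off $h$. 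The delicate point, and the crux of the whole argument, is to arrange the $E$-factor so that the complementary factor lands in $M(E,F')$ with norm at most one rather than merely in $F'$; it is precisely the product-space structure of $G$, as opposed to an arbitrary Lozanovskii factorization, that should make this possible.
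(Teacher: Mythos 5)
Your preliminary reductions are correct as far as they go: $G'=(E\cdot F)'=M(E,F')$ by Proposition~\ref{dual}, $(E\cdot G')'=M(E,G'')=M(E,G)=F$ by Theorem~\ref{multiplier}, hence $(E\cdot G')''=F'$, and the whole problem does collapse to showing that $B_E\cdot B_{G'}$ is convex, equivalently that $\rho_{E\cdot G'}$ is a norm, so that $E\cdot G'$ is a product Banach function space. But that step --- which you yourself flag as ``the main obstacle'' and ``the crux'' --- is never carried out, and neither of your two strategies closes it. For the cancellation route, Corollary~\ref{cancellation} requires all three factors to be Banach function spaces with the Fatou property and both products to be product Banach function spaces; at this stage $E\cdot G'$, equipped with its convex-hull norm, is only known to be \emph{normable}, and normability gives neither completeness (the paper's example $\ell_1\cdot\ell_1$ shows it can fail) nor the Fatou property, so you cannot legitimately cancel $F$ from $F\cdot(E\cdot G')=F\cdot F'$. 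For the direct route, Theorem~\ref{minimum} factorizes elements of the product space $G=E\cdot F$, whereas your $h$ lives in $F'$, so ``peeling an $E$-factor off $h$'' has no starting point; arranging the complementary factor to land in $B_{M(E,F')}$ rather than merely in $B_{F'}$ is exactly the assertion to be proved, not something the cited results supply.

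The paper closes precisely this gap by inverting your order of logic: it derives the identity first, at the level of square roots, where no convexity hypothesis is needed, and obtains the product structure of $E\cdot G'$ as a \emph{consequence}. Since Calderon spaces $A^{\frac12}B^{\frac12}$ are always product Banach function spaces (with the Fatou property when $A$ and $B$ have it), every space in the following computation is an honest Banach function space: from $E^{\frac12}F^{\frac12}=G^{\frac12}$, Lozanovskii duality and Theorem~\ref{dual-p-convex} give $(E')^{\frac12}(F')^{\frac12}=(G')^{\frac12}\cdot L_2$; multiplying by $E^{\frac12}$ and using $E^{\frac12}(E')^{\frac12}=L_2$ turns this into $L_2\cdot (F')^{\frac12}=\bigl(E^{\frac12}(G')^{\frac12}\bigr)\cdot L_2$, and a comparison of the two descriptions shows that $\rho_{(E^{\frac12}(G')^{\frac12})\cdot L_2}$ coincides with the genuine norm of the left-hand side, hence is a norm. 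Only now does Corollary~\ref{cancellation} apply (the common factor being $L_2$, and all spaces in sight Fatou Banach function spaces), yielding $(F')^{\frac12}=E^{\frac12}(G')^{\frac12}$; since $(F')^{\frac12}$ is $2$-convex with constant one, so is $E^{\frac12}(G')^{\frac12}$, which is exactly the convexity of $B_E\cdot B_{G'}$ that you were missing, and squaring gives $E\cdot G'=F'$. If you want to salvage your plan, you must run your cancellation at this half-power level; as written, your proposal assumes the theorem's hardest assertion.
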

\begin{proof} It is easy to see that  $G$ is a Banach function space with the Fatou property.  Now $E^{\frac 12}\cdot F^{\frac 12}=G^{\frac 12}$ implies that $E^{\prime\frac 12}F^{\prime\frac 12}=G^{\prime\frac 12}\cdot L_{2}$. Hence $E^{\frac 12}\cdot E^{\prime\frac 12}F^{\prime\frac 12}=E^{\frac 12}\cdot G^{\prime\frac 12}\cdot L_{2}$, which can be written as $L_{2}\cdot F^{\prime\frac 12}=(E^{\frac 12}G^{\prime\frac 12})\cdot L_{2}$. One can check then that $\|\cdot\|_{L_{2}\cdot F^{\prime\frac 12}}=\rho_{(E^{\frac 12}G^{\prime\frac 12})\cdot L_{2}}(\cdot)$, and thus $\rho_{(E^{\frac 12}G^{\prime\frac 12})\cdot L_{2}}$ is a norm. Thus $(E^{\frac 12}G^{\prime\frac 12})\cdot L_{2}$ is a product Banach function space with the Fatou property. Hence $F^{\prime\frac12}=E^{\frac 12}G^{\prime\frac 12}$. This shows that $E^{\frac 12}G^{\prime\frac 12}$ is 2-convex with convexity constant 1 and thus $E\cdot G^{\prime}$ is a product Banach function space. From $F^{\prime\frac12}=E^{\frac 12}G^{\prime\frac 12}$ it follows then that $F^{\prime}=E\cdot G^{\prime}$.

\end{proof}
The following theorem gives a more general sufficient condition for $E\cdot M(E, F)=F$ and $E=M(M(E, F), F)$ to hold. 
\begin{theorem}
Let $E$ and $F$ be Banach function spaces such that there exists $1<p<\infty$ such that $E$ is $p$-convex and $F$ is $p$-concave with convexity and concavity constants equal to 1 and assume $E$ has the Fatou property. Then the following hold.
\begin{itemize}
\item[(i)] $E\cdot M(E, F)$ is a product Banach function space and $E\cdot M(E, F)=F$.
\item[(ii)] $E=M(M(E, F), F)$.
\end{itemize}
\end{theorem}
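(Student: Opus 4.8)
The plan is to reduce the general statement to the case already settled in Theorem \ref{X^{YY}} by passing to a suitable power, thereby converting the $p$-convexity/$p$-concavity hypotheses into a clean factorization through $L_p$. The key observation is that $p$-concavity of $F$ with constant $1$ means exactly that $F^p$ (the $p$-concavification) is a Banach function space, and dually $p$-convexity of $E$ with constant $1$ means $E^p$ is well-defined and a Banach function space. So first I would form the $p$-convexification and $p$-concavification and try to realize $M(E,F)$ in terms of a multiplier space into $L_p$.

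The central technical step is to show that $M(E,F) = M(E, L_p) \cdot F^{\frac 1p}$ in a suitable sense, or more precisely to relate $E \cdot M(E,F)$ to the factorization $E \cdot M(E, L_p) = L_p$ supplied by Theorem \ref{X^{YY}}. Concretely, since $E$ is $p$-convex with constant $1$ and has the Fatou property, Theorem \ref{X^{YY}} gives that $E \cdot M(E, L_p)$ is a product Banach function space equal to $L_p$. Writing $F = L_p^{\frac 1p} \cdot F^{\frac{1}{p'}}$-type identities is awkward directly, so instead I would work with $G := F$ and use the $p$-concavity to write $F$ as obtained from $F^p$ (a Banach function space) via $F = (F^p)^{\frac 1p} L_\infty^{\frac{1}{p'}}$, i.e. $F$ is the $p$-convexification of $F^p$. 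The hope is then that
\[
M(E,F) = M\!\left(E^p, (F^p)\right)^{\frac 1p}
\]
after an appropriate reduction, so that applying Lozanovskii's theorem and Theorem \ref{X^{YY}} to the pair $(E^p, F^p)$ at the level $p=1$ yields the desired product structure, and taking $p$-convexifications back recovers $E \cdot M(E,F) = F$. Part (ii) should then follow by the same bootstrapping, or more cleanly by invoking Theorem \ref{multiplier} together with the cancellation Corollary \ref{cancellation}: once $E \cdot M(E,F) = F$ is established as a product Banach function space, Theorem \ref{multiplier} gives $M(E,F) = M(E, E \cdot M(E,F)) = M(E,F)$ trivially, and one applies the multiplier duality in the reversed order to get $E = M(M(E,F), F)$.

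The main obstacle I expect is managing the interplay between the convexification/concavification operations and the multiplier spaces: the identity $M(E,F)^p = M(E^p, F^p)$ (or the correct variant) is the crux, and verifying it requires care because raising to the power $p$ does not commute with the multiplier construction in an entirely formal way, and the constants-equal-to-one hypotheses are exactly what make the norms match up isometrically rather than merely isomorphically. A secondary difficulty is ensuring all Fatou-property hypotheses needed to invoke Theorem \ref{X^{YY}} and the cancellation corollary are in place for the auxiliary spaces $E^p$, $F^p$, and $M(E,F)$; here I would use that $M(E, \cdot)$ inherits the Fatou property from its target and that $E$ having the Fatou property passes to $E^p$. Once the reduction to Theorem \ref{X^{YY}} is correctly set up, both (i) and (ii) should follow without further serious difficulty.
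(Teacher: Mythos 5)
Your reduction breaks down at its very first step, and the failure is not repairable along the lines you propose. You assert that $p$-concavity of $F$ with constant $1$ means exactly that the $p$-concavification $F^{p}$ is a Banach function space. This is backwards: the natural functional on the $p$-concavification, $\|f\|_{F^{p}}=\|\,|f|^{\frac 1p}\,\|_{F}^{p}$, is subadditive precisely when the space is $p$-\emph{convex} with constant $1$; $p$-concavity gives the reverse inequality. Indeed, applying the $p$-concavity estimate to $|f|^{\frac 1p}$ and $|g|^{\frac 1p}$ yields $\|f\|_{F^{p}}+\|g\|_{F^{p}}\le \|f+g\|_{F^{p}}$ for $f,g\ge 0$, so the functional is superadditive (for example $F=L_{1}$ is $p$-concave with constant $1$ for every $p$, and $F^{p}=L_{1/p}$ is not normable). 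Consequently the pair $(E^{p},F^{p})$ to which you want to apply Lozanovskii's theorem and Theorem \ref{X^{YY}} ``at level $p=1$'' does not consist of Banach function spaces, the proposed identity $M(E,F)=M(E^{p},F^{p})^{\frac 1p}$ falls outside the scope of those results, and the whole reduction collapses. The correct way to exploit the concavity of $F$ is to \emph{dualize} rather than concavify: $F^{\prime}$ is $p^{\prime}$-convex with constant $1$, so $E\cdot F^{\prime}=(E^{p})^{\frac 1p}\left((F^{\prime})^{p^{\prime}}\right)^{\frac 1{p^{\prime}}}$ is a Calderon space, hence a product Banach function space with the Fatou property; setting $G=E\cdot F^{\prime}$, Theorem \ref{duality} gives that $E\cdot G^{\prime}$ is a product Banach function space equal to $F^{\prime\prime}=F$ (here one uses that $p$-concavity with constant one and $p<\infty$ forces $F$ to have the Fatou property), and Proposition \ref{dual} identifies $G^{\prime}=(E\cdot F^{\prime})^{\prime}=M(E,F^{\prime\prime})=M(E,F)$. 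This is the paper's argument, and it never requires a concavification of $F$.

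Your treatment of (ii) is essentially sound once (i) is granted: writing $F=M(E,F)\cdot E$ as a product Banach function space and applying Theorem \ref{multiplier} with the factors in reversed order gives $E=M\left(M(E,F),\,E\cdot M(E,F)\right)=M(M(E,F),F)$, provided you verify that $M(E,F)$ has the Fatou property; this follows because $M(E,F)=(E\cdot F^{\prime})^{\prime}$ is an associate space (your appeal to ``inheriting Fatou from the target'' also needs $F$ itself to have the Fatou property, which comes from the $p$-concavity hypothesis, not from the stated assumptions). The paper instead applies Theorem \ref{duality} once more to obtain $M(E,F)\cdot F^{\prime}=E^{\prime}$ and then takes associates; both routes work. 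But since your (ii) is conditioned on (i), and your route to (i) rests on a false premise, the proposal as a whole has a genuine gap.
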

\begin{proof}
For the proof of (i) observe first that $F^{\prime}$ is $p^{\prime}$-convex, so that $E\cdot F^{\prime}=(E^{p})^{\frac 1p}\left(F^{\prime p^{\prime}}\right)^{\frac 1{p^{\prime}}}$ is a product Banach function space with the Fatou property. Let $G=E\cdot F^{\prime}$. Then by Theorem \ref{duality} we have that $E\cdot G^{\prime}=F^{\prime}$. From the results in section 1 we have that $G^{\prime}=(E\cdot F^{p^{\prime}})^{\prime}=M(E, F^{\prime\prime})=M(E, F)$. Hence $E\cdot M(E, F)=F$. This shows that (i) holds. To prove (ii) note that by (i) we have $M(E. F)\cdot F^{\prime}=E^{\prime}$. Hence $M(M(E, F), F)=(M(E,F)\cdot F^{\prime})^{\prime}=E^{\prime\prime}=E$.
\end{proof}

 \begin{theorem}\label{concavity} Let $F$ be a Banach function space with the Fatou property and let $1<p<\infty$. Then the following are equivalent.
\begin{itemize}
\item[(i)] $F$ is $p$-concave with concavity constant one.
\item[(ii)] $M( L_{p}, F)\cdot L_{p}$ is a  product Banach function space and $M( L_{p}, F)\cdot L_{p}=F$.
\end{itemize}
\end{theorem}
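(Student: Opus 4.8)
The plan is to reduce the statement to its dual form and then invoke the characterization already established in Theorem~\ref{X^{YY}}. Write $H=M(L_{p},F)$. The first step is to identify $H$ with a multiplier space whose target is $L_{p'}$: since $F$ has the Fatou property one has the isometric identity $M(L_{p},F)=M(F',L_{p'})$, which follows from the same symmetric Hölder argument used in Proposition~\ref{dual}. Indeed, for $g\ge 0$ the condition $\|fg\|_{F}\le\|f\|_{L_{p}}$ for all $f$ is equivalent, via $\|fg\|_{F}=\|fg\|_{F''}=\sup_{\|k\|_{F'}\le 1}\int fgk$, to $\|gk\|_{L_{p'}}\le\|k\|_{F'}$ for all $k\in F'$. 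I would also record here that $H$ has the Fatou property, since $M(E,G)$ inherits it from its target $G$ (if $0\le g_{n}\uparrow g$ with $\sup_{n}\|g_{n}\|_{M(E,G)}\le C$, then $\|fg\|_{G}\le C\|f\|_{E}$ for every $f\in E$ by the Fatou property of $G$); and $L_{p'}$, $F'$ have it automatically.

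Next I would reformulate condition (i). By the constant-one duality between concavity and convexity — the same passage invoked in the proof of the preceding theorem, where $p$-concavity with constant one of a space yields $p'$-convexity with constant one of its associate — condition (i) is equivalent to the assertion that $F'$ is $p'$-convex with convexity constant one; the converse implication uses $F=F''$, together with the fact that $p'$-convexity and $p$-concavity with constant one pass between $F^{*}$ and the band $F'$ on which the norm and lattice operations agree. Now I would apply Theorem~\ref{X^{YY}} to the space $F'$, which has the Fatou property automatically, with exponent $p'$: it gives that $F'$ is $p'$-convex with constant one if and only if $F'\cdot M(F',L_{p'})=L_{p'}$ is a product Banach function space. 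Substituting the identity of the first step, $M(F',L_{p'})=H$, this says precisely that (i) holds if and only if $F'\cdot H=L_{p'}$ is a product Banach function space.

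It remains to pass between $F'\cdot H=L_{p'}$ and $H\cdot L_{p}=F$, which is exactly the content of the duality Theorem~\ref{duality} applied to the pair $H,L_{p}$ (respectively $H,F'$), all of which have the Fatou property. If $H\cdot L_{p}=F$ is a product Banach function space, Theorem~\ref{duality} yields that $H\cdot F'=(L_{p})'=L_{p'}$ is a product Banach function space; conversely, if $F'\cdot H=L_{p'}$ is a product Banach function space, the same theorem gives $H\cdot(L_{p'})'=(F')'=F$, using $F=F''$. Chaining the three equivalences establishes (i)$\Leftrightarrow$(ii). The only steps demanding real care are the bookkeeping of the first two: verifying the isometric (not merely isomorphic) multiplier identity $M(L_{p},F)=M(F',L_{p'})$ and the constant-one transfer of convexity and concavity between $F'$ and $F^{*}$, together with confirming that every Fatou hypothesis required by Theorems~\ref{X^{YY}} and~\ref{duality} is in place.
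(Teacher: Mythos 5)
Your argument is correct, and every hypothesis it needs is genuinely available: $F'$ has the Fatou property automatically, $M(L_{p},F)$ inherits it from $F$ exactly as you argue, $F=F''$ isometrically, and the identity $M(L_{p},F)=M(F',L_{p'})$ holds by the symmetric H\"older argument you sketch. However, your route differs from the paper's. The paper gets (i)$\Rightarrow$(ii) in one line by taking $E=L_{p}$ in the theorem immediately preceding Theorem \ref{concavity} (the factorization theorem for a $p$-convex space against a $p$-concave one), and proves (ii)$\Rightarrow$(i) by a short direct computation: by Proposition \ref{dual} together with the multiplier identity, condition (ii) yields $F'=\left(M(L_{p},F)\cdot L_{p}\right)'=M(M(F',L_{p'}),L_{p'})$, which is $p'$-convex with constant one because every multiplier space with target $L_{p'}$ is, whence $F$ is $p$-concave with constant one. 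You instead treat both implications through a single chain of equivalences obtained by dualizing: the constant-one convexity/concavity duality turns (i) into ``$F'$ is $p'$-convex with constant one,'' Theorem \ref{X^{YY}} applied to $F'$ with exponent $p'$ turns that into the factorization $F'\cdot M(F',L_{p'})=L_{p'}$, and Theorem \ref{duality}, applied once in each direction, translates that factorization into $M(L_{p},F)\cdot L_{p}=F$. The ingredients overlap --- the paper's (ii)$\Rightarrow$(i) tacitly uses the same multiplier identity and the same constant-one duality you spell out --- but your pivot is Theorem \ref{X^{YY}}, which the paper's proof of this theorem never invokes. What your route buys is conceptual symmetry: it exhibits Theorem \ref{concavity} as precisely the K\"othe dual of Theorem \ref{X^{YY}}, with one argument serving both directions. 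What the paper's route buys is brevity: it reuses the general theorem it has just proved, and its converse direction avoids a second pass through Theorem \ref{duality}.
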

\begin{proof}
Part (i) follows by taking $E=L_{p}$ in the above theorem.  Now assume (ii) holds. Then $F^{\prime}= \left(M( L_{p}, F)\cdot L_{p}\right)^{\prime}=M(M(F^{\prime}, L_{p^{\prime}}), L_{p^{\prime}})$ is $p^{\prime}$-convex, and thus $F$ is $p$-concave.
\end{proof}
As an application of the previous theorems we present a corollary, which reproves Theorem 1 of \cite{Reisner81}. We shall only present the isometric case.
\begin{corollary} Let $1<p<q<\infty$ and let $s$ be defined by $\frac 1s=\frac 1p-\frac 1q$. Then the following are equivalent.
\begin{itemize}
\item[(i)] $E$ is $p$-convex and $q$-concave with convexity and concavity constants equal to 1.
\item[(ii)] $M(L_{q}, E)\cdot M(E, L_{p})$ is a product Banach function space and $L_{s}=M(L_{q}, E)\cdot M(E, L_{p})$.
\end{itemize}
\end{corollary}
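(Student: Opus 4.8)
My plan is to reduce the whole equivalence to the two characterizations already in hand, Theorem~\ref{X^{YY}} (for $p$-convexity) and Theorem~\ref{concavity} (for $q$-concavity), by passing everything through a single concavification. Set $r=q/p\in(1,\infty)$ and $H=E^{p}$. A direct substitution $u_{k}=|f_{k}|^{p}$ in the $q$-concavity inequality for $E$ shows that $H=E^{p}$ is $r$-concave with constant one. I then record four bookkeeping identities, each isometric and each preserving the ``product Banach function space'' property: first, $L_{q}=(L_{r})^{1/p}$ and, since $\tfrac1s=\tfrac1p-\tfrac1q$ gives $s=pr'$, we have $(L_{r'})^{1/p}=L_{s}$ and the H\"older factorization $L_{q}\cdot L_{s}=L_{p}$; second, the cited theorem on $(E^{p})'$ gives $H'=M(E,L_{p})^{p}$, hence $M(E,L_{p})=(H')^{1/p}$; third, $M(X^{1/p},Y^{1/p})=M(X,Y)^{1/p}$, so that $M(L_{q},E)=M\bigl((L_{r})^{1/p},H^{1/p}\bigr)=M(L_{r},H)^{1/p}$; and fourth, $X^{1/p}\cdot Y^{1/p}=(X\cdot Y)^{1/p}$.

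For the forward implication (i)$\Rightarrow$(ii) I would simply compute. Assuming $E$ is $p$-convex and $q$-concave with constants one (and Fatou), $H=E^{p}$ is an $r$-concave Banach function space with the Fatou property, so the identities above give $M(L_{q},E)\cdot M(E,L_{p})=M(L_{r},H)^{1/p}\cdot(H')^{1/p}=\bigl(M(L_{r},H)\cdot H'\bigr)^{1/p}$. Now Theorem~\ref{concavity} applied to $H$ (with exponent $r$) yields that $M(L_{r},H)\cdot L_{r}=H$ is a product Banach function space, and Theorem~\ref{duality} then gives $M(L_{r},H)\cdot H'=L_{r}'=L_{r'}$, again a product space. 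Taking the $p$-convexification of this identity produces $(L_{r'})^{1/p}=L_{s}$, a product Banach function space, which is exactly (ii).

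For the reverse implication (ii)$\Rightarrow$(i) I would first extract the $p$-convexity. From (ii) and $L_{q}\cdot L_{s}=L_{p}$ we get $L_{p}=\bigl(L_{q}\cdot M(L_{q},E)\bigr)\cdot M(E,L_{p})$; since $L_{q}\cdot M(L_{q},E)\subseteq E$ with norm at most one, while $\|fg\|_{L_{p}}\le\|f\|_{E}\|g\|_{M(E,L_{p})}$ gives the reverse inequality, a sandwich argument forces $E\cdot M(E,L_{p})=L_{p}$ isometrically, hence a product Banach function space. Theorem~\ref{X^{YY}} then makes $E$ $p$-convex with constant one. Finally, rewriting the same identity as $\bigl(M(L_{q},E)\cdot L_{q}\bigr)\cdot M(E,L_{p})=L_{p}=E\cdot M(E,L_{p})$ and cancelling $M(E,L_{p})$ by Corollary~\ref{cancellation} yields $M(L_{q},E)\cdot L_{q}=E$, whence Theorem~\ref{concavity} gives $q$-concavity with constant one, establishing (i).

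The hard part will be the reverse direction, where the difficulty is not conceptual but a matter of upgrading pointwise set-inclusions and the one-sided multiplier inequality into genuine isometric equalities of \emph{product} Banach function spaces, and of checking that the factors entering Corollary~\ref{cancellation} really are product spaces so that the cancellation applies. A second point needing care throughout is the Fatou hypothesis: Theorems~\ref{X^{YY}} and~\ref{concavity} carry it, whereas the corollary posits only the convexity and concavity conditions; here I would use that $q$-concavity with $q<\infty$ forces order-continuous norm and work under the Fatou property, so that $H=E^{p}$ inherits it and all the cited theorems are available.
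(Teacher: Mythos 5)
Your forward implication (i)$\Rightarrow$(ii) is correct but follows a genuinely different route from the paper's. The paper stays with $E$ itself: Theorem \ref{concavity} (at exponent $q$) gives $L_{q}\cdot M(L_{q},E)=E$, Theorem \ref{X^{YY}} (at exponent $p$) gives $E\cdot M(E,L_{p})=L_{p}$, and then the two identities $L_{q}^{\frac 12}\cdot M(L_{q},E)^{\frac 12}\cdot M(E,L_{p})^{\frac 12}=L_{p}^{\frac 12}$ and $L_{q}^{\frac 12}\cdot L_{s}^{\frac 12}=L_{p}^{\frac 12}$ are compared by cancelling $L_{q}^{\frac 12}$ via Corollary \ref{cancellation}. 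Your reduction via the concavification $H=E^{p}$ (which is $r$-concave, $r=q/p$), the identity $(E^{p})'=M(E,L_{p})^{p}$, Theorem \ref{concavity} at exponent $r$, and Theorem \ref{duality} to get $M(L_{r},H)\cdot H'=L_{r'}$, followed by $p$-convexification, is a valid alternative; the Fatou property needed by the cited theorems is available because $q$-concavity is a hypothesis in this direction. What it costs you is a pair of unstated (but true and easily checked) isometric identities, $M(X^{1/p},Y^{1/p})=M(X,Y)^{1/p}$ and $X^{1/p}\cdot Y^{1/p}=(X\cdot Y)^{1/p}$; what it buys is transparent exponent bookkeeping and no appeal to the cancellation machinery.

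The reverse implication is where your proposal has a genuine gap, at exactly the point you defer as ``the hard part.'' Your sandwich argument giving $E\cdot M(E,L_{p})=L_{p}$ isometrically is fine (and needs no Fatou hypothesis). But the cancellation step fails as stated: Corollary \ref{cancellation} requires all three factors to be Banach function spaces with the Fatou property and both products to be product Banach function spaces, whereas your factor $M(L_{q},E)\cdot L_{q}$ is at that stage only a set-theoretic product carrying an iterated quasi-norm; nothing in (ii) tells you that $\rho_{M(L_{q},E)\cdot L_{q}}$ is a norm, i.e., that $M(L_{q},E)\cdot L_{q}$ is a product Banach function space, so the corollary cannot be invoked. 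This is not a routine verification to be filled in later: it is the crux of this direction. In addition, Theorem \ref{X^{YY}} (ii)$\Rightarrow$(i) and Corollary \ref{cancellation} both need the Fatou property of $E$ (hence of $M(L_{q},E)$), and your proposed source of it, $q$-concavity, is the conclusion of this very direction, so that fix is circular here.

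The paper's proof is built to avoid both obstacles. It writes $L_{2}=L_{s}^{\frac 12}L_{s'}^{\frac 12}$, substitutes (ii) and H\"older's identity $L_{s'}=L_{q}\cdot L_{p'}$, and regroups into $\bigl(L_{q}\cdot M(L_{q},E)\bigr)^{\frac 12}\cdot\bigl(L_{p'}\cdot M(E,L_{p})\bigr)^{\frac 12}\subset E^{\frac 12}\cdot (E')^{\frac 12}=L_{2}$, where every factor is a Calderon half-power space and hence automatically a product Banach function space; then Proposition \ref{uniqueness} (not Corollary \ref{cancellation}) yields \emph{simultaneously} $L_{q}\cdot M(L_{q},E)=E$ and $L_{p'}\cdot M(E,L_{p})=E'$ --- in particular these are product spaces as part of the conclusion rather than a hypothesis. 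The $q$-concavity of $E$ then follows from Theorem \ref{concavity}, while the $p$-convexity is extracted through the associate space: $E'$ is $p'$-concave (Theorem \ref{concavity} applies cleanly to $E'$, which always has the Fatou property), hence $E''$ is $p$-convex, and $E=E''$ once $q$-concavity supplies the Fatou property of $E$. (To be fair, Proposition \ref{uniqueness} also demands the Fatou property of all four factors, so the paper's reverse direction likewise reads $E$ with an implicit Fatou assumption; but granting that standing assumption, your argument still lacks the product-space hypothesis for the cancellation, which is the substantive gap.) To salvage your route you would either have to prove directly that $M(L_{q},E)\cdot L_{q}$ is a product Banach function space --- for instance by first showing $M(L_{q},E)=M(M(E,L_{p}),L_{s})$ is $s$-convex with constant one --- or switch to Proposition \ref{uniqueness} as the paper does.
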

\begin{proof}
Assume (i) holds. Then $E$ is $q$-concave for $q<\infty$,  so $E$ has the Fatou property (see \cite{Dodds77}). Hence by the above theorem we have that $L_{q}\cdot M(L_{q}, E)=E$. This implies that $L_{q}^{\frac 12}\cdot M(L_{q}, E)^{\frac 12}\cdot M(E, L_{p})^{\frac 12}=E^{\frac 12}\cdot M(E, L_{p})^{\frac 12}=L_{p}^{\frac 12}$, since $E$ is $p$-convex. On the other hand also by H\"older's inequality $L_{q}^{\frac 12}\cdot L_{s}^{\frac 12}=L_{p}^{\frac 12}$. Hence $M(L_{q}, E)^{\frac 12}\cdot M(E, L_{p})^{\frac 12 }=L_{s}^{\frac 12}=L_{2s}$ by Corollary \ref{cancellation}. This implies that $M(L_{q}, E)^{\frac 12}\cdot M(E, L_{p})^{\frac 12 }$ is $2$-convex, so that $M(L_{q}, E)\cdot M(E, L_{p})$ is a product Banach function space and $L_{s}=M(L_{q}, E)\cdot M(E, L_{p})$. Now assume (ii) holds. We will use then that $L_{q}\cdot L_{p^{\prime}}=L_{s^{\prime}}$ to get that
\begin{equation*}
L_{2}=L_{s}^{\frac 12}L_{s^{\prime}}^{\frac 12}=(L_{q}\cdot M(L_{q}, E))^{\frac 12}\cdot(L_{p^{\prime}}\cdot M(E, L_{p}))^{\frac 12}\subset E^{\frac 12}\cdot (E^{\prime})^{\frac 12}=L_{2}.
\end{equation*}
It follows now from Proposition \ref{uniqueness} that $L_{q}\cdot M(L_{q}, E)=E$ and $L_{p^{\prime}}\cdot M(L_{p^{\prime}}, E^{\prime})=E^{\prime}$. This implies that $E$ is $q$-concave and $E^{\prime}$ is $p^{\prime}$-concave. As $E$ is $q$-concave for $q<\infty$, $E$ has the Fatou property and it follows that $E=E^{\prime\prime}$ is $p$-convex.

\end{proof}
\section{Applications}
In \cite{Bennett96} G. Bennett showed that many classical inequalities involving the $\ell_{p}$-norm can be expressed as product Banach sequence spaces. We will show that some of his results are easy consequences of the results of the previous sections. We start with his result about the spaces $d({\bf a},p)$ and $g({\bf a},p)$. Let ${\bf a}=(a_{1}, a_{2}, \cdots)$ be a non-negative sequence of real numbers with $a_{1}>0$. Define $A_{n}=a_{1}+\cdots a_{n}$. Then for $p\ge 1$ the Banach sequence spaces $d({\bf a},p)$ and $g({\bf a},p)$ are defined as follows:
\begin{equation*}
d({\bf a}, p)=\{{\bf x}: \|{\bf x}\|_{d({\bf a, p})}<\infty\},
\end{equation*}
where
\begin{equation*}
\|{\bf x}\|_{d({\bf a},p)}=\left(\sum_{n=1}^{\infty}a_{n}\sup_{k\ge n}|x_{k}|^{p}\right)^{\frac 1p}
\end{equation*}
and 
\begin{equation*}
g({\bf a}, p)=\{{\bf x}: \|{\bf x}\|_{g({\bf a, p})}<\infty\},
\end{equation*}
where
\begin{equation*}
\|{\bf x}\|_{g({\bf a},p)}=\sup_{n}\left(\frac 1{A_{n}}\sum_{k=1}^{n}|x_{k}|^{p}\right)^{\frac 1p}.
\end{equation*}
\begin{theorem}
Let $1\le p<\infty$. Then  $d({\bf a},p)\cdot g({\bf a},p)$ is a product Banach sequence space and  $d({\bf a},p)\cdot g({\bf a},p)=\ell_{p}$.
\end{theorem}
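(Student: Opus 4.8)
The plan is to reduce the statement to Lozanovskii's factorization by identifying $g(\mathbf a,p)$ as a space of multipliers. First I would record that both spaces are $p$-convexifications of their $p=1$ counterparts: $\|\,|\mathbf x|^p\,\|_{d(\mathbf a,1)}=\|\mathbf x\|_{d(\mathbf a,p)}^p$ and $\|\,|\mathbf x|^p\,\|_{g(\mathbf a,1)}=\|\mathbf x\|_{g(\mathbf a,p)}^p$, while $\ell_p=\ell_1^{1/p}$. In particular $d(\mathbf a,p)=d(\mathbf a,1)^{1/p}$ is $p$-convex with convexity constant one, and it has the Fatou property (if $0\le\mathbf x^{(m)}\uparrow\mathbf x$ then $\sup_{k\ge n}|x_k^{(m)}|^p\uparrow\sup_{k\ge n}|x_k|^p$, and monotone convergence on the outer sum gives $\|\mathbf x^{(m)}\|_{d(\mathbf a,p)}\uparrow\|\mathbf x\|_{d(\mathbf a,p)}$). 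Hence Theorem \ref{X^{YY}} applies with $E=d(\mathbf a,p)$, and it suffices to prove $g(\mathbf a,p)=M(d(\mathbf a,p),\ell_p)$ isometrically; the conclusion that $d(\mathbf a,p)\cdot M(d(\mathbf a,p),\ell_p)$ is a product Banach function space equal to $\ell_p$ is then immediate. The case $p=1$ is handled the same way, with Theorem \ref{X^{YY}} replaced by Lozanovskii's theorem $d(\mathbf a,1)\cdot d(\mathbf a,1)'=\ell_1$, noting that $M(d(\mathbf a,1),\ell_1)=d(\mathbf a,1)'$ is just the associate space (Proposition \ref{dual}).

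The heart of the argument is thus the computation of $M(d(\mathbf a,p),\ell_p)$, valid for every $p\ge 1$. Writing out the operator norm and substituting $u_k=|x_k|^p$, $v_k=|y_k|^p$ turns it into
\begin{equation*}
\|\mathbf y\|_{M(d(\mathbf a,p),\ell_p)}^p=\sup\left\{\sum_k u_k v_k:\ u_k\ge 0,\ \sum_n a_n\sup_{k\ge n}u_k\le 1\right\},
\end{equation*}
which is exactly the associate norm $\|\mathbf v\|_{d(\mathbf a,1)'}$. To evaluate it I would first observe that the constraint depends on $\mathbf u$ only through its decreasing envelope $\tilde u_n=\sup_{k\ge n}u_k$, so one may assume $\mathbf u$ non-increasing, whence $\sum_n a_n\sup_{k\ge n}u_k=\sum_n a_n u_n$. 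Writing a non-increasing nonnegative $\mathbf u$ as $u_k=\sum_{m\ge k}c_m$ with $c_m=u_m-u_{m+1}\ge 0$ and summing by parts gives $\sum_n a_n u_n=\sum_m c_m A_m$ and $\sum_k u_k v_k=\sum_m c_m\big(\sum_{k\le m}v_k\big)$. The resulting linear program $\sup\{\sum_m c_m(\sum_{k\le m}v_k):\ c_m\ge 0,\ \sum_m c_m A_m\le 1\}$ is maximized by concentrating all mass at the index $m$ maximizing $A_m^{-1}\sum_{k\le m}v_k$, so its value is $\sup_m A_m^{-1}\sum_{k\le m}v_k=\|\mathbf v\|_{g(\mathbf a,1)}=\|\mathbf y\|_{g(\mathbf a,p)}^p$. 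This establishes $g(\mathbf a,p)=M(d(\mathbf a,p),\ell_p)$ isometrically.

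Putting the two steps together, $d(\mathbf a,p)\cdot g(\mathbf a,p)=d(\mathbf a,p)\cdot M(d(\mathbf a,p),\ell_p)$, which by Theorem \ref{X^{YY}} (or by Lozanovskii's theorem when $p=1$) is a product Banach function space equal to $\ell_p$. I expect the main obstacle to be the multiplier computation of the second paragraph, and within it the summation-by-parts duality identifying $d(\mathbf a,1)'$ with $g(\mathbf a,1)$: the reduction to non-increasing $\mathbf u$ via the decreasing envelope, and the verification that the linear program is attained by concentrating the mass $c_m$ at a single index, are the places where the inequalities must be pinned down with care. Everything else — the convexification bookkeeping and the appeal to Theorem \ref{X^{YY}} — is routine once these identities are in hand.
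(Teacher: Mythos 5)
Your proof is correct, and while it rests on the same two pillars as the paper's argument --- Lozanovskii's factorization and the duality $d(\mathbf{a},1)'=g(\mathbf{a},1)$ --- it is packaged differently and is more self-contained. The paper reduces immediately to $p=1$ by $p$-convexification of the product identity, so that it only needs $d(\mathbf{a},1)'=g(\mathbf{a},1)$; for the nontrivial inequality $\|\cdot\|_{d(\mathbf{a},1)'}\le\|\cdot\|_{g(\mathbf{a},1)}$ it simply cites Bennett, proving only the easy reverse inequality by testing against the vectors $A_n^{-1}(1,\dots,1,0,\dots)$. You instead stay at level $p$, identify $g(\mathbf{a},p)=M(d(\mathbf{a},p),\ell_p)$ isometrically, and invoke Theorem \ref{X^{YY}} (Lozanovskii directly when $p=1$). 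After your substitution $u_k=|x_k|^p$, $v_k=|y_k|^p$, the multiplier computation is precisely that same duality, so the genuine difference is that you prove it from scratch --- decreasing envelope, summation by parts, and the linear-programming observation --- where the paper imports the hard half from Bennett. Your route buys a citation-free proof and the multiplier identification itself, which has independent value (for instance, combined with part (iii) of Theorem \ref{X^{YY}} it yields $M(g(\mathbf{a},p),\ell_p)=d(\mathbf{a},p)$ at no extra cost); the paper's route buys brevity, given the reference.

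One point you flagged as delicate does need to be pinned down: the representation $u_k=\sum_{m\ge k}c_m$ presumes $u_k\to 0$. When $\sum_n a_n<\infty$ the constraint set contains non-increasing $\mathbf{u}$ with positive limit $L$, which contributes $L\sum_k v_k$ to the objective and $L\sum_n a_n$ to the constraint. This is harmless: either treat $L$ as mass at an index $m=\infty$ and note that $\lim_m A_m^{-1}\sum_{k\le m}v_k\le\sup_m A_m^{-1}\sum_{k\le m}v_k$, or first restrict the supremum to finitely supported $\mathbf{u}$ by monotone convergence; but as written your summation by parts silently assumes $L=0$.
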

\begin{proof}
It suffices to prove the theorem for $p=1$. The general result follows then by $p$-convexication. To prove the theorem for $p=1$ we need only show, by Lozanovskii's factorization theorem, that $d({\bf a}, 1)^{\prime}=g({\bf a}, 1)$. For the inequality $\|{\bf y}\|_{d({\bf a}, 1)^{\prime}}\le \|{\bf y}\|_{g({\bf a}, 1)}$ we refer to the first part of the proof of Theorem 3.8 of \cite{Bennett96}. For the reverse inequality observe that
\begin{equation*}
\frac 1{A_{n}}\sum_{k=1}^{n}|y_{k}|\le \|\frac 1{A_{n}}(1,1,\cdots, 1, 0, \cdots)\|_{d({\bf a},1)}\|{\bf y}\|_{d({\bf a}, 1)^{\prime}}=\|{\bf y}\|_{d({\bf a}, 1)^{\prime}}
\end{equation*}
for all $n\ge 1$, which proves $\|{\bf y}\|_{d({\bf a}, 1)^{\prime}}= \|{\bf y}\|_{g({\bf a}, 1)}$.
\end{proof}
\begin{remark}
The above theorem reproves Theorem 3.8 of \cite{Bennett96}. There the factorization was proved directly by a lengthy argument.
\end{remark}
A direct consequence of Theorem \ref{duality} and the above theorem is the following theorem, which corresponds to Theorems 12.3 and 12.22 of \cite{Bennett96}, where again  direct proofs were given.
\begin{theorem}
Let $1<p<\infty$. Then
\begin{equation*}
d({\bf a}, p)^{\prime}=\ell_{p^{\prime}}\cdot g({\bf a}, p),
\end{equation*}
and 
\begin{equation*}
g({\bf a}, p)^{\prime}=\ell_{p^{\prime}}\cdot d({\bf a}, p),
\end{equation*}
where the right hand sides are product Banach sequence spaces.
\end{theorem}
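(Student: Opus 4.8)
The plan is to read off both identities directly from Theorem \ref{duality} applied to the factorization $d({\bf a},p)\cdot g({\bf a},p)=\ell_{p}$ furnished by the preceding theorem, after checking the hypotheses. The only preliminary needed is that both $d({\bf a},p)$ and $g({\bf a},p)$ have the Fatou property, and this I would verify straight from the definitions of their norms: if $0\le {\bf x}^{(m)}\uparrow {\bf x}$ pointwise, then $\sup_{k\ge n}|x_{k}^{(m)}|^{p}\uparrow \sup_{k\ge n}|x_{k}|^{p}$ and $\frac{1}{A_{n}}\sum_{k=1}^{n}|x_{k}^{(m)}|^{p}\uparrow \frac{1}{A_{n}}\sum_{k=1}^{n}|x_{k}|^{p}$ for each $n$, so by monotone convergence the corresponding norms increase to the norm of ${\bf x}$. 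Thus both spaces have the Fatou property and the previous theorem gives that $d({\bf a},p)\cdot g({\bf a},p)=\ell_{p}$ is a product Banach sequence space.

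Next I would record the standard K\"othe duality $\ell_{p}^{\prime}=\ell_{p^{\prime}}$, which is also where the restriction $1<p<\infty$ (rather than $p\ge 1$) enters, since it guarantees $p^{\prime}<\infty$ and that $\ell_{p^{\prime}}$ is itself a genuine factor space. With $G=\ell_{p}$ and $G^{\prime}=\ell_{p^{\prime}}$ in hand, I would apply Theorem \ref{duality} twice to this one factorization, using the commutativity $d({\bf a},p)\cdot g({\bf a},p)=g({\bf a},p)\cdot d({\bf a},p)$ to choose which factor plays the role of $E$. Taking $E=d({\bf a},p)$ and $F=g({\bf a},p)$ yields that $E\cdot G^{\prime}=F^{\prime}$ is a product Banach function space, that is $d({\bf a},p)\cdot \ell_{p^{\prime}}=g({\bf a},p)^{\prime}$; taking instead $E=g({\bf a},p)$ and $F=d({\bf a},p)$ yields $g({\bf a},p)\cdot \ell_{p^{\prime}}=d({\bf a},p)^{\prime}$. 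Rewriting the products with $\ell_{p^{\prime}}$ on the left, again by commutativity, gives exactly the two displayed identities, and the assertion that the right-hand sides are product Banach sequence spaces is part of the conclusion of Theorem \ref{duality}.

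There is essentially no obstacle here beyond bookkeeping: the entire content is carried by the factorization theorem and by Theorem \ref{duality}. The one point requiring a moment's care is confirming the Fatou property of the two sequence spaces so that Theorem \ref{duality} is applicable; everything else is a direct substitution into its statement.
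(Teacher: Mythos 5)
Your proposal is correct and follows exactly the route the paper intends: the paper derives both identities as a direct consequence of Theorem \ref{duality} applied to the factorization $d({\bf a},p)\cdot g({\bf a},p)=\ell_{p}$, with the two displayed identities coming from the two choices of which factor plays the role of $E$. Your additional verification of the Fatou property for $d({\bf a},p)$ and $g({\bf a},p)$ (which the paper leaves implicit) is accurate and makes the application of Theorem \ref{duality} fully legitimate.
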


Next we will show how another theorem of \cite{Bennett96} is a direct consequence of the  results of the previous section. In this case we will present the result for arbitrary $\sigma$-finite measure spaces as that will allow us to consider arbitrary order continuous operators. Let $L\subset L_{0}(X, \mu)$ be an order ideal, which has the property that for each measurable set $A$ of positive measure contains a measurable subset $B$ of positive measure such that $\chi_{B}\in L$. Let $1<p<\infty$ and $T:L\to L_{p}$ be a strictly positive order continuous linear map. Then there exists a maximal order ideal $\mathcal D_{p}\subset L_{0}$ such that $T(\mathcal D_{p})\subset L_{p}$. Define $\|f\|_{\mathcal D_{p}}=\|T(|f|)\|_{p}$. Then it is straightforward to see that this defines a norm on $\mathcal D_{p}$ with the Fatou property. Hence $\mathcal D_{p}$ is a Banach function space with respect to the norm $\|\cdot\|_{\mathcal D_{p}}$. 
\begin{theorem}
The Banach function space $\mathcal D_{p}$ is $p$-concave with concavity constant equal to 1. Hence $L_{p}\cdot M(L_{p}, \mathcal D_{p})$ is a product Banach function space and $\mathcal D_{p}=L_{p}\cdot M(L_{p}, \mathcal D_{p})$.
\end{theorem}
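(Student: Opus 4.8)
The plan is to observe first that the second assertion is an immediate consequence of Theorem~\ref{concavity}. Indeed, $\mathcal D_p$ has the Fatou property, so once we know that $\mathcal D_p$ is $p$-concave with concavity constant one, part (ii) of that theorem gives that $M(L_p,\mathcal D_p)\cdot L_p$ is a product Banach function space equal to $\mathcal D_p$; since the pointwise product is commutative this is the same as $L_p\cdot M(L_p,\mathcal D_p)=\mathcal D_p$. Thus the whole theorem reduces to establishing the $p$-concavity of $\mathcal D_p$ with constant one.

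To prove this, I would unwind the definition of the norm. Fix $f_1,\dots,f_n\in\mathcal D_p$ and put $h_k=|f_k|$ and $g=\left(\sum_{k=1}^n h_k^p\right)^{1/p}$; note that $g\in\mathcal D_p$ since $g\le n^{1/p}\bigvee_k h_k$ and $\mathcal D_p$ is an order ideal. Because $\|f\|_{\mathcal D_p}=\|T(|f|)\|_p$, raising the desired concavity inequality to the $p$-th power shows that $p$-concavity with constant one is equivalent to
\[
\sum_{k=1}^n \int_X (Th_k)^p\,d\mu \le \int_X (Tg)^p\,d\mu .
\]
I would deduce this from the single pointwise estimate
\[
\left(\sum_{k=1}^n (Th_k)(x)^p\right)^{1/p}\le (Tg)(x)\quad\text{for a.e. }x,
\]
after which one merely raises to the $p$-th power and integrates over $X$.

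The key step, which I expect to be the main obstacle, is precisely this pointwise inequality, since it amounts to interchanging the pointwise (finite-dimensional) $\ell_p$-norm with the operator $T$. I would establish it using positivity of $T$ together with the dual description of the $\ell_p$-norm. For any fixed vector $\vec a=(a_1,\dots,a_n)\ge 0$ with $\sum_k a_k^{p'}\le 1$, H\"older's inequality gives $\sum_k a_k h_k\le g$ pointwise, whence positivity and linearity of $T$ yield $\sum_k a_k\,(Th_k)=T\!\left(\sum_k a_k h_k\right)\le Tg$ almost everywhere. Taking a countable dense set $D$ of such vectors $\vec a$ in the positive part of the $\ell_{p'}$-ball and discarding the null union of the associated exceptional sets, one obtains that $\sum_k a_k (Th_k)(x)\le (Tg)(x)$ holds simultaneously for all $\vec a\in D$ at a.e.\ $x$. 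Since $\sup_{\vec a\in D}\sum_k a_k (Th_k)(x)=\left(\sum_k (Th_k)(x)^p\right)^{1/p}$ by $\ell_p$--$\ell_{p'}$ duality together with continuity in $\vec a$, the pointwise inequality follows. Passing through a countable dense family is what keeps the supremum measurable and sidesteps having to choose the optimal $\vec a$ as a function of $x$.

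Finally I would integrate. Raising the pointwise estimate to the $p$-th power and integrating over $X$ yields exactly the displayed integral inequality, which by definition of $\|\cdot\|_{\mathcal D_p}$ reads $\sum_k \|f_k\|_{\mathcal D_p}^p\le \|g\|_{\mathcal D_p}^p$; taking $p$-th roots gives the $p$-concavity inequality with constant one, and then Theorem~\ref{concavity} finishes the argument as above. I would remark that order continuity of $T$ is not used in this part at all: only positivity and linearity enter, the role of order continuity being confined to the construction of $\mathcal D_p$ and its Fatou property, both settled before the statement.
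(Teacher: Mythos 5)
Your proposal is correct and follows essentially the same route as the paper's own proof: the pointwise estimate $\sum_k \alpha_k Tf_k = T(\sum_k \alpha_k f_k) \le T\bigl(\bigl(\sum_k f_k^p\bigr)^{1/p}\bigr)$ via H\"older and positivity, the supremum over a countable dense subset of the positive unit ball of $\ell_{p'}(n)$, integration to get $p$-concavity with constant one, and then Theorem~\ref{concavity}. The extra details you supply (measurability of the supremum, $g\in\mathcal D_p$, and the observation that order continuity of $T$ is not needed in this step) are accurate elaborations of what the paper leaves implicit.
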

\begin{proof}
Let $0\le f_{1}, \cdots, f_{n}\in \mathcal D_{p}$. Let $0\le \alpha_{k}$ such that $\sum_{k=1}^{n} \alpha_{k}^{p^{\prime}}\le 1$.  Then have
\begin{equation*}
\sum_{k=1}^{n}\alpha_{k}Tf_{k}=T\left(\sum_{k=1}^{n}\alpha_{k}f_{k}\right)\le T\left(\left(\sum_{k=1}^{n}f_{k}^{p}\right)^{\frac 1p}\right)  \text{ a.e.}
\end{equation*}
By taking then the supremum over a countable dense set of the positive unit ball of $\ell_{p^{\prime}}(n)$ we get that 
\begin{equation*}
\left(\sum_{k=1}^{n}(Tf_{k})^{p}\right)^{\frac 1p}\le T\left(\left(\sum_{k=1}^{n}f_{k}^{p}\right)^{\frac 1p}\right) \text{ a.e.}
\end{equation*}
Taking $L_{p}$-norms on both sides we get
\begin{equation*}
\left(\sum_{k=1}^{n}\|f_{k}\|_{\mathcal D_{k}}\right)^{\frac 1p}\le \left\|\left(\sum_{k=1}^{n}f_{k}^{p}\right)^{\frac 1p}\right\|_{\mathcal D_{p}},
\end{equation*}
i.e., $\mathcal D_{p}$ is $p$-concave with concavity constant equal to 1. The remaining statements follow now from Theorem \ref{concavity}

\end{proof}
In \cite{Bennett96} the factorization in the  above theorem was proved for Banach sequence spaces as part of Theorem 17.6  by a completely different method, using Maurey's factorization theorem. With essentially the same argument as used above we can extend the above theorem by replacing $L_{p}$ by a $p$-concave  Banach function with concavity constant equal to 1. We leave the details to the reader. 
\bibliography{Bibliography}
\bibliographystyle{amsplain}
\end{document}